\numberwithin{equation}{section}
\newcommand{\N}{\mathbb{N}}
\newcommand{\R}{\mathbb{R}}
\newcommand{\sfd}{{\sf d}}
\newcommand{\restr}[1]{\lower3pt\hbox{\(|_{#1}\)}}
\newcommand{\nchi}{{\raise.3ex\hbox{\(\chi\)}}}
\newcommand{\fr}{\penalty-20\null\hfill\(\blacksquare\)}
\newcommand{\mm}{\mathfrak{m}}
\newcommand{\X}{{\rm X}}
\newcommand{\Y}{{\rm Y}}
\newcommand{\LIP}{{\rm LIP}}
\newtheorem{theorem}{Theorem}[section]
\newtheorem{corollary}[theorem]{Corollary}
\newtheorem{lemma}[theorem]{Lemma}
\newtheorem{proposition}[theorem]{Proposition}
\newtheorem{definition}[theorem]{Definition}
\newtheorem{example}[theorem]{Example}
\newtheorem{remark}[theorem]{Remark}
\title[A categorical pespective on extended metric-topological spaces]{A categorical perspective \\ on extended metric-topological spaces}
\author{Enrico Pasqualetto}
\address{Department of Mathematics and Statistics,
P.O.\ Box 35 (MaD), FI-40014 University of Jyvaskyla}
\email{enrico.e.pasqualetto@jyu.fi}
\author{Timo Schultz}
\address{Department of Mathematics and Statistics,
P.O.\ Box 35 (MaD), FI-40014 University of Jyvaskyla}
\email{timo.m.schultz@jyu.fi}
\author{Janne Taipalus}
\address{Department of Mathematics and Statistics,
P.O.\ Box 35 (MaD), FI-40014 University of Jyvaskyla}
\email{janne.m.m.taipalus@jyu.fi}
\begin{document}
\date{\today} 
\keywords{Extended metric-topological space; limit; colimit; compactification.}
\subjclass[2020]{18F60, 46M40, 46M15, 49J52, 53C23}
\begin{abstract}
Motivated by the analysis and geometry of metric-measure structures in infinite dimensions, we study the category
of extended metric-topological spaces, along with many of its distinguished subcategories (such as the one of compact spaces).
One of the main achievements is the proof of the bicompleteness (i.e.\ of the existence of all small limits and colimits)
of the aforementioned categories.
\end{abstract}
\maketitle
\tableofcontents
\section{Introduction}
\subsection*{General overview}
For the purpose of developing a robust framework for metric analysis and optimal transport problems in
(possibly non-smooth and infinite-dimensional) metric-measure structures, Ambrosio, Erbar and Savar\'{e}
introduced in \cite{AmbrosioErbarSavare16} the notions of \emph{extended metric-topological space} and
\emph{extended metric-topological measure space}, which we abbreviate to \emph{e.m.t.\ space} and
\emph{e.m.t.m.\ space}, respectively. The former are sets \(\X\) equipped with a topology \(\tau\)
and an extended distance \(\sfd\) that fulfill suitable compatibility conditions, while the latter
are e.m.t.\ spaces \((\X,\tau,\sfd)\) that are endowed with a (finite) Radon measure \(\mm\geq 0\)
defined on the Borel \(\sigma\)-algebra of \((\X,\tau)\).
\medskip

The objective of this paper is to examine e.m.t.\ spaces from a categorical perspective. Specifically,
we constructively demonstrate that the category \({\bf ExtMetTop}\) of e.m.t.\ spaces -- along with several
of its most relevant subcategories -- are both \emph{complete} and \emph{cocomplete}, in the sense that they
admit all small limits and colimits. In the aforementioned categories, as for the class of morphisms
we opt in favour of \emph{continuous-short maps} (scilicet, maps that are continuous between the given topologies
and non-expansive with respect to the given extended distances). The (co)completeness of
a class of e.m.t.\ spaces signifies its stability under a wide variety of (universal) constructions, including
e.g.\ products, quotients and inverse/direct limits.
\subsection*{Statement of results}

In accordance with \cite[Definition 4.1]{AmbrosioErbarSavare16}, we call \((\X,\tau,\sfd)\) an e.m.t.\ space if:
\begin{itemize}
\item The topology \(\tau\) coincides with the initial topology of the set \(\LIP_{b,1}(\X,\tau,\sfd)\) of all bounded continuous-short functions
from \((\X,\tau,\sfd)\) to \(\R\).
\item The extended distance \(\sfd\) can be \emph{\(\tau\)-recovered}, by which we mean that
\[
\sfd(x,y)=\sup\big\{|f(x)-f(y)|\;\big|\;f\in\LIP_{b,1}(\X,\tau,\sfd)\big\}\quad\text{ for every }x,y\in\X;
\]
\end{itemize}
see Definition \ref{def:emt}. To ground intuition for the concept of e.m.t.(m.) space, let us reflect on the theory of \emph{abstract Wiener spaces},
which is the archetype of metric-measure structure the axiomatisation of e.m.t.\ space is tailored to. Let us consider an infinite-dimensional,
separable Banach space \({\rm B}\) together with a centred, non-degenerate Gaussian measure \(\gamma\). Indicating with \(({\rm H},|\cdot|_{\rm H})\)
the \emph{Cameron--Martin space} of \(({\rm B},\gamma)\) (see \cite{Bogachev15}), the natural extended distance \(\sfd_{\rm H}\) on \({\rm B}\) is
\[
\sfd_{\rm H}(x,y)\coloneqq\left\{\begin{array}{ll}
|x-y|_{\rm H}\\
+\infty
\end{array}\quad\begin{array}{ll}
\text{ if }x,y\in{\rm B}\text{ satisfy }x-y\in{\rm H},\\
\text{ otherwise.}
\end{array}\right.
\]
It turns out that \(({\rm B},\tau_{\rm B},\sfd_{\rm H},\gamma)\) is an e.m.t.m.\ space \cite[Section 13.2]{AmbrosioErbarSavare16}, where \(\tau_{\rm B}\)
denotes either the strong or the weak topology of \({\rm B}\). This construct illustrates the necessity -- when aspiring to a description of an
infinite-dimensional space -- to allow for a decoupling of the r\^{o}les of the topology and of the distance. Indeed, in this context the (extended) distance
\(\sfd\) that is used to compute lengths and speeds induces a topology that is typically too fine to possess effective topological and measure-theoretical features,
whence the need for a coarser topology \(\tau\). In addition to abstract Wiener spaces, the class of e.m.t.m.\ spaces includes all \emph{metric measure spaces}
(where the topology is exactly the one induced by the distance), configuration spaces \cite{AlbeverioKondratievRockner,AmbrosioErbarSavare16,Br:Su},
extended Wasserstein spaces \cite[Section 5]{AmbrosioErbarSavare16} and Hellinger(--Kantorovich) spaces \cite{Ds:So:25}, amongst other spaces of interest.
\medskip

The main result of this paper states that
\[
\text{the category }{\bf ExtMetTop}\text{ is \emph{bicomplete} (i.e.\ both complete and cocomplete);}
\]
cf.\ Theorem \ref{thm:ExtMetTop_bicompl}. Let us briefly delineate our strategy to prove the bicompleteness of \({\bf ExtMetTop}\):
\begin{itemize}
\item We introduce the category \({\bf PreExt\Psi MetTop}\) of \emph{pre-extended pseudometric-topological spaces} (Definition \ref{def:pre-e.pm.t.}),
which are topological spaces \((\X,\tau)\) equipped with an extended pseudodistance \(\sfd\) (and no other requirements), with continuous-short maps as morphisms.
\item Leveraging our knowledge of all limits and colimits in the category \({\bf Top}\) of topological spaces and in the category \({\bf Ext\Psi Met}\) of extended
pseudometric spaces (see Section \ref{s:lim_Set_Top_ExtPMet}), we prove that \({\bf PreExt\Psi MetTop}\) is bicomplete (Theorem \ref{thm:PreExtPMetTop_bicompl}).
\item In Section \ref{s:emt-fication} we construct a functor \({\sf emt}\colon{\bf PreExt\Psi MetTop}\to{\bf ExtMetTop}\), which we name the \emph{e.m.t.-fication
functor}, that assigns to each pre-extended pseudometric-topological space an e.m.t.\ space in a canonical (or, better still, a universal) manner.
\item The functor \({\sf emt}\) exhibits \({\bf ExtMetTop}\) as a \emph{reflective} subcategory of \({\bf PreExt\Psi MetTop}\), i.e.\ \({\sf emt}\)
is the left adjoint to the inclusion functor \({\bf ExtMetTop}\hookrightarrow{\bf PreExt\Psi MetTop}\); cf.\ Theorem \ref{thm:emt-fication}. Consequently, a general
principle in category theory (which we report in Proposition \ref{prop:create_lim}) guarantees that the category \({\bf ExtMetTop}\) is bicomplete as well. The same
result would allow to supply an explicit description of all (co)limits in \({\bf ExtMetTop}\).
\end{itemize}
Along the same lines, we investigate many categories of e.m.t.\ spaces and their mutual connections. In the following table we list
those categories that we shall encounter throughout the paper:
\smallskip

{\small
\[\begin{tabular}{|p{2.9cm}||p{5.6cm}|p{3.3cm}|}
\hline
\multicolumn{3}{|c|}{List of categories} \\
\hline
Category & Objects & Morphisms\\
\hline
\({\bf Tych}\) & Tychonoff spaces & Continuous maps \\
\({\bf Ext\Psi Met}\) & Extended pseudometric spaces & Short maps \\
\({\bf ExtMet}\) & Extended metric spaces & Short maps \\
\({\bf PreExt\Psi MetTop}\) & Pre-extended pseudometric-topological spaces (abbrev.\ to \emph{pre-e.pm.t.\ spaces})
& Continuous-short maps \\
\({\bf ExtMetTop}\) & Extended metric-topological spaces (abbrev.\ to \emph{e.m.t.\ spaces})
& Continuous-short maps \\
\({\bf MetTop}_{\leq\lambda}\) & E.m.t.\ spaces with diameter at most \(\lambda\) & Continuous-short maps \\
\({\bf ExtMetTop}_{\rm mc}\) & Metrically-complete e.m.t.\ spaces & Continuous-short maps \\
\({\bf CptExtMetTop}\) & Compact e.m.t.\ spaces & Continuous-short maps \\
\({\bf CptExtMetTop}_{\rm geo}\) & Compact geodesic e.m.t.\ spaces & Continuous-short maps \\
\hline
\end{tabular}\]
}
\smallskip

Amongst all the above categories, we wish to draw particular attention to that of \emph{compact} e.m.t.\ spaces
(scilicet, e.m.t.\ spaces \((\X,\tau,\sfd)\) with \(\tau\) compact), denoted by \({\bf CptExtMetTop}\).
From a geometric or analytic viewpoint, the importance of \({\bf CptExtMetTop}\) is due to the fact that -- in combination
with the \emph{compactification functor} \(\gamma\colon{\bf ExtMetTop}\to{\bf CptExtMetTop}\), to which Section \ref{s:compactification} is devoted --
it allows us to reduce the study of several problems to the compact setting. This procedure is undeniably relevant even in the setting
of metric measure spaces, since it is not possible to define a compactification functor at a purely metric level.
It is also worth pointing out that \({\bf ExtMetTop}\) `incorporates' both the category \({\bf Tych}\) of Tychonoff
spaces (i.e.\ completely regular Hausdorff spaces) and the category \({\bf ExtMet}\) of extended metric spaces;
see Section \ref{s:Tych_and_ExtMet}.
\medskip

In the following table we collect the various functors between different categories of topological and/or metric structures
that will be introduced and studied in this paper:
\smallskip

{\small
\[\begin{tabular}{|p{1.1cm}||p{2.8cm}|p{2.9cm}|p{4.65cm}|}
\hline
\multicolumn{4}{|c|}{List of functors} \\
\hline
Functor & Domain & Codomain & Name/description \\
\hline
\({\sf emt}\) & \({\bf PreExt\Psi MetTop}\) & \({\bf ExtMetTop}\) & E.m.t.-fication functor \\
\(\gamma\) & \({\bf ExtMetTop}\) & \({\bf CptExtMetTop}\) & Compactification functor \\
\(\bar\gamma\) & \({\bf PreExt\Psi MetTop}\) & \({\bf CptExtMetTop}\) & Generalised version of the compactification functor \\
\(\mathfrak D_\infty\) & \({\bf Tych}\) & \({\bf ExtMetTop}\) & \(\infty\)-discretisation functor \\
\(\mathfrak D_\lambda\) & \({\bf Tych}\) & \({\bf MetTop}_{\leq\lambda}\) & \(\lambda\)-discretisation functor \\
\({\rm T}\) & \({\bf ExtMet}\) & \({\bf ExtMetTop}\) & Attaching the induced topology to an extended metric space \\
\(\mathfrak T_\lambda\) & \({\bf ExtMetTop}\) & \({\bf MetTop}_{\leq\lambda}\) & \(\lambda\)-truncation functor \\
\({\sf mc}\) & \({\bf ExtMetTop}\) & \({\bf ExtMetTop}_{\rm mc}\) & Metric completion functor\\
\({\sf geo}\) & \({\bf CptExtMetTop}\) & \({\bf CptExtMetTop}_{\rm geo}\) & Geodesification functor \\
\hline
\end{tabular}\]
}

The diagram below illustrates the relations amongst the various categories under consideration.
As for the notation we adopt in the diagram, by \(\iota\) we generically indicate an inclusion functor,
while \(U\) is used for forgetful functors. Let us also remind that by writing \(F\dashv G\) we mean that
the functor \(F\) is the left adjoint to the functor \(G\).
\smallskip

\[\begin{tikzcd}[row sep=huge]
& {\bf Set} & \\
{\bf Top} \arrow[ur,"U", end anchor=west] & {\bf PreExt\Psi MetTop} \arrow[l,"U"]
\arrow[d, "{\sf emt}"', "\dashv"{inner sep=.3ex} ,shift right = 1.5] \arrow[r,"U"']
& {\bf Ext\Psi Met} \arrow[ul, "U"', start anchor = north, end anchor = east] \\
{\bf Tych} \arrow[u,hook, "\iota"]
\arrow[r,"\mathfrak D_\infty"', "\mathbin{\rotatebox[origin=c]{90}{$\dashv$}}"{inner sep=.5ex}, shift right = 1.5]
\arrow[d, "\mathfrak D_\lambda", "\vdash"'{inner sep=.3ex},  start anchor = south, end anchor = north, shift left = 1.5]
& {\bf ExtMetTop} \arrow[l,"U_\infty"',bend right=15, start anchor = real west, end anchor = real east, shift right = 1.5]
\arrow[dl,"\mathfrak T_\lambda","\mathbin{\rotatebox[origin=c]{-51}{$\vdash$}}"'{inner sep=.03ex},end anchor=east,shift left=0.5]
\arrow[d,"\gamma"', "\dashv"{inner sep=.3ex}, shift right = 1.5]
\arrow[dr, "{\sf mc}"', "\mathbin{\rotatebox[origin=c]{66}{$\dashv$}}"{inner sep=.03ex}, shift right = 1]
\arrow[r,"U",bend left = 15, start anchor = real east, end anchor = real west, shift left = 1.5]
\arrow[u, "\iota"', hook', bend right = 15, start anchor = north, end anchor = south, shift right = 1.5]
& {\bf ExtMet} \arrow[l, "{\rm T}", "\mathbin{\rotatebox[origin=c]{90}{$\dashv$}}"'{inner sep=.5ex}, shift left = 1.5] \arrow[u,hook',"\iota"']\\
{\bf MetTop}_{\leq\lambda} \arrow[u, "U_\lambda", bend left = 15, start anchor = north, end anchor = south, shift left = 1.5]
\arrow[ur,hook,"\iota",bend left = 15, start anchor = north east, shift right = 1]
& {\bf CptExtMetTop} \arrow[d,"{\sf geo}"', "\dashv"{inner sep=.3ex}, shift right = 1.5] \arrow[r,hook',"\iota"']
\arrow[u, "\iota"', hook', bend right = 15, start anchor = north, end anchor = south, shift right = 1.5]
& {\bf ExtMetTop}_{\rm mc} \arrow[ul,hook', "\iota"', bend right = 15, end anchor = south east, shift left = 1] \\
& {\bf CptExtMetTop}_{\rm geo}
\arrow[u, hook', "\iota"', bend right = 15, start anchor = north, end anchor = south, shift right = 1.5] &
\end{tikzcd}\]
\smallskip

En route, we also obtain a few results of independent interest regarding e.m.t.\ spaces. For instance,
we show in Proposition \ref{prop:charact_cpt_emt} that all compact Hausdorff spaces equipped with a lower
semicontinuous extended distance are e.m.t.\ spaces, whereas in Appendix \ref{app:equiv_emt} we provide
several characterisations of an e.m.t.\ space (akin to the various definitions of completely regular space).
\subsection*{Pertinent literature}
As emerges from the previous sections of this introduction, the approach to extended metric-topological spaces that
we have presented was prompted by problems in geometric analysis and metric geometry. Nonetheless, kindred
metric-topological structures are objects of study in rather different research areas. In the first instance
we mention the theory of \emph{separated metric compact Hausdorff spaces}, which were introduced by Hofmann and Reis
in \cite{HR18} as the metric counterpart of Nachbin's notion of \emph{compact ordered space} \cite{Nac65}; cf.\ also
Tholen's paper \cite{Tho09} for a more general approach to ordered/metric/topological structures. The category
\({\bf MetCH_{sep}}\) of separated metric compact Hausdorff spaces has been comprehensively investigated e.g.\ also
by Gutierres and Hofmann \cite{GH13}, by Hofmann and Nora \cite{HN20}, and by Abbadini and Hofmann \cite{AbbadiniHofmann25}.
As a consequence of Proposition \ref{prop:charact_cpt_emt}, compact e.m.t.\ spaces fully agree with those separated metric
compact Hausdorff spaces that are \emph{symmetric}. The identification persists at the categorical level:
\[
{\bf CptExtMetTop}={\bf MetCH_{sep,sym}},
\]
where \({\bf MetCH_{sep,sym}}\) denotes the category of symmetric separated metric compact Hausdorff spaces, with
continuous non-expansive maps as morphisms; see Remark \ref{rmk:comparison_MetCH}.
\medskip

Another strictly related theory is the one of \emph{topometric spaces}. The definition of a compact topometric space,
which coincides with our notion of compact e.m.t.\ space (see Remark \ref{rmk:comparison_MetCH}), has been introduced
by Ben Yaacov and Usvyatsov in \cite{BU10} as a formalism for type spaces appearing in continuous first order logic.
The theory of (not necessarily compact) topometric spaces was then established by Ben Yacoov in \cite{Ben08b},
in connection with the analysis of perturbation structures on type spaces. As we shall discuss in Remark \ref{rmk:comparison_topometric},
the class of topometric spaces is strictly vaster than the one of e.m.t.\ spaces, and the latter correspond exactly
to those topometric spaces that are \emph{completely regular} (in the sense of \cite[Definition 2.3]{Ben13}).
Further results concerning topometric spaces were achieved e.g.\ in the works \cite{BBM13,BM15,BM25}.
\medskip

\noindent{\bf Acknowledgements.}
The first named author has been supported by the Research Council of Finland grant 362898. The third named author has been supported
by the Research Council of Finland grant 354241 and the Emil Aaltonen Foundation through the research group ``Quasiworld network''.
The authors are grateful to Zhenhao Li for having brought to their attention some relevant literature.
\section{Preliminaries}
\subsection{Topological notions}
We begin by reminding many concepts and results in topology, referring e.g.\ to \cite{Kelley75} for a detailed account of these topics.
\medskip

A topological space \((\X,\tau)\) is said to be \emph{completely regular} provided the following property holds: if \(C\subseteq\X\) is a closed
set and \(x\in\X\setminus C\), there exists a continuous function \(f\colon\X\to[0,1]\) such that \(f(y)=0\) for every \(y\in C\) and \(f(x)=1\).
A completely regular topological space that is also Hausdorff is called a \emph{Tychonoff space}. Every compact Hausdorff space is a Tychonoff space.
\medskip

Given two topological spaces \((\X,\tau_\X)\) and \((\Y,\tau_\Y)\), we denote by \(C((\X,\tau_\X);(\Y,\tau_\Y))\) the set of all continuous maps
from \((\X,\tau_\X)\) to \((\Y,\tau_\Y)\). In the particular case where \((\Y,\tau_\Y)\) is the real line \(\R\) equipped with the Euclidean topology,
we adopt the shorthand notation \(C(\X,\tau_\X)\coloneqq C((\X,\tau_\X);\R)\). We define \(C_b(\X,\tau_\X)\) as the space of all those functions
\(f\in C(\X,\tau_\X)\) that are also bounded. The space \(C_b(\X,\tau_\X)\) is a Banach space with respect to the supremum norm
\[
\|f\|_{C_b(\X,\tau_\X)}\coloneqq\sup_{x\in\X}|f(x)|\quad\text{ for every }f\in C_b(\X,\tau_\X).
\]
\subsubsection*{Quotient topologies}
Let \((\X,\tau)\) be a topological space and \(\mathcal R\) an equivalence relation on \(\X\). Denote
by \(\pi\colon\X\to\X/\mathcal R\) the projection to the quotient space \(\X/\mathcal R\).
Then the \emph{quotient topology} \(\tilde\tau\) on \(\X/\mathcal R\) is defined as the final topology
\[
\tilde\tau\coloneqq\big\{U\subseteq\X/\mathcal R\;\big|\;\pi^{-1}(U)\in\tau\big\}.
\]
The quotient space \((\X/\mathcal R,\tilde\tau)\) and the projection map \(\pi\colon\X\to\X/\mathcal R\)
are characterised by the following universal property: if \((\Y,\tau_\Y)\) is a topological space
and \(\varphi\colon\X\to\Y\) is a continuous map such that \(\varphi(x)=\varphi(y)\) whenever \(x,y\in\X\) satisfy \(x\mathcal R y\),
then there exists a unique continuous map
\(\tilde\varphi\colon\X/\mathcal R\to\Y\) such that
\[\begin{tikzcd}
\X \arrow[r,"\varphi"] \arrow[d,swap,"\pi"] & \Y \\
\X/\mathcal R \arrow[ur,swap,"\tilde\varphi"] &
\end{tikzcd}\]
is a commutative diagram.
\subsubsection*{Stone--\v{C}ech compactification}
We remind the Stone--\v{C}ech compactification of a topological space:
\begin{theorem}[Stone--\v{C}ech compactification]
Let \((\X,\tau)\) be a topological space. Then there exist a compact Hausdorff space \((\beta\X,\beta\tau)\) and a continuous map \(i\colon\X\to\beta\X\)
such that the following universal property holds: if \((\Y,\tau_\Y)\) is a compact Hausdorff space and \(\varphi\colon\X\to\Y\) is a continuous map,
then there exists a unique continuous map \(\varphi_\beta\colon\beta\X\to\Y\) such that the diagram
\[\begin{tikzcd}
\X \arrow[r,"i"] \arrow[dr,swap,"\varphi"] & \beta\X \arrow[d,"\varphi_\beta"] \\
& \Y
\end{tikzcd}\]
commutes. We say that \((\beta\X,\beta\tau)\) is the \emph{Stone--\v{C}ech compactification} of \((\X,\tau)\).
Moreover, the image \(i(\X)\) of \(i\) is dense in \(\beta\X\). The map \(i\colon\X\to\beta\X\) is a homeomorphism onto its image if and only if
\((\X,\tau)\) is a Tychonoff space.
\end{theorem}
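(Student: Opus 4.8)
The plan is to realise $\beta\X$ concretely as a closed subspace of a Tychonoff cube. First I would set $\mathcal F\coloneqq C((\X,\tau);[0,1])$, observe that this is a genuine set (a subset of $[0,1]^\X$), and form the product $Q\coloneqq[0,1]^{\mathcal F}$, which is compact Hausdorff by Tychonoff's theorem. Define the evaluation map $e\colon\X\to Q$ by $e(x)\coloneqq(f(x))_{f\in\mathcal F}$; it is continuous because each coordinate $f$ is. I would then \emph{define} $(\beta\X,\beta\tau)$ to be the closure $\overline{e(\X)}$ taken inside $Q$, and let $i\colon\X\to\beta\X$ be $e$ with its codomain restricted accordingly. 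Being a closed subset of a compact Hausdorff space, $\beta\X$ is itself compact Hausdorff, and $i(\X)$ is dense in $\beta\X$ by construction, so these two assertions come for free.

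For the universal property I would first record the special case $\Y=[0,1]$: for each $f\in\mathcal F$, the coordinate projection $\pi_f\colon Q\to[0,1]$ restricts to a continuous map $\beta\X\to[0,1]$ that extends $f$ along $i$, since $\pi_f\circ e=f$. Now let $(\Y,\tau_\Y)$ be an arbitrary compact Hausdorff space and $\varphi\colon\X\to\Y$ continuous. Since compact Hausdorff spaces are Tychonoff, the evaluation map $e_\Y\colon\Y\to[0,1]^{C((\Y,\tau_\Y);[0,1])}$ is a topological embedding whose image is \emph{closed} in the cube (the image is compact, hence closed in a Hausdorff space). For each $g\in C((\Y,\tau_\Y);[0,1])$ we have $g\circ\varphi\in\mathcal F$, so by the special case it extends to some $\widetilde{g\circ\varphi}\colon\beta\X\to[0,1]$; assembling these over all such $g$ yields a continuous map $\Psi\colon\beta\X\to[0,1]^{C((\Y,\tau_\Y);[0,1])}$ with $\Psi\circ i=e_\Y\circ\varphi$. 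Because $\Psi(i(\X))=e_\Y(\varphi(\X))\subseteq e_\Y(\Y)$, the set $e_\Y(\Y)$ is closed, and $i(\X)$ is dense in $\beta\X$, continuity forces $\Psi(\beta\X)\subseteq e_\Y(\Y)$; composing with the inverse homeomorphism $e_\Y^{-1}$ then produces the desired $\varphi_\beta\colon\beta\X\to\Y$ satisfying $\varphi_\beta\circ i=\varphi$. Uniqueness is the standard observation that two continuous maps into a Hausdorff space which agree on the dense subset $i(\X)$ must coincide.

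It remains to characterise when $i$ is a homeomorphism onto its image. If it is, then $\X$ is homeomorphic to a subspace of the compact Hausdorff (hence Tychonoff) space $\beta\X$, and since the Tychonoff property is hereditary, $\X$ is Tychonoff. Conversely, assume $\X$ is Tychonoff. Then $i=e$ is injective: given $x\neq y$, the singleton $\{y\}$ is closed, so complete regularity provides $f\in\mathcal F$ with $f(x)=1$ and $f\equiv 0$ on $\{y\}$, whence $e(x)\neq e(y)$. Moreover $e$ is an embedding: for every open $U\subseteq\X$ and every $x\in U$, applying complete regularity to the closed set $\X\setminus U$ yields $f\in\mathcal F$ with $f(x)=1$ and $f\equiv 0$ off $U$; then $\{z\in\X\,:\,f(z)>0\}$ is a $\tau$-open neighbourhood of $x$ contained in $U$, and its $e$-image equals $e(\X)\cap\pi_f^{-1}((0,1])$, an open subset of $e(\X)$ containing $e(x)$. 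Hence $e$ is continuous, injective, and open onto its image, i.e.\ a homeomorphism onto $i(\X)$.

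The step I expect to require the most care is the verification that $\varphi_\beta$ genuinely lands in $\Y$ rather than merely in the ambient cube; this hinges on choosing the cube representation of $\Y$ so that $e_\Y(\Y)$ is closed, which is exactly where the compactness of $\Y$ is used essentially. Everything else is either a direct invocation of Tychonoff's theorem or a routine density/Hausdorffness argument.
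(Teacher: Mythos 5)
Your construction is correct. The paper itself offers no proof of this statement: it is recalled as a classical theorem in the preliminaries, with only the ultrafilter model of \(\beta\N\) described afterwards for later use in Example \ref{ex:betaX_vs_gammaX}. Your argument is the standard Tychonoff-cube realisation, and all the steps check out: the closure \(\overline{e(\X)}\) in \([0,1]^{\mathcal F}\) is compact Hausdorff with \(i(\X)\) dense by fiat; the universal property is correctly reduced to the coordinate case \(\Y=[0,1]\) and then lifted by embedding \(\Y\) into its own cube, where the compactness of \(\Y\) is used exactly where you say it is (to make \(e_\Y(\Y)\) closed, so that the density of \(i(\X)\) forces \(\Psi(\beta\X)\subseteq e_\Y(\Y)\)); uniqueness is the usual dense-agreement argument in a Hausdorff target; and the Tychonoff characterisation uses heredity of complete regularity in one direction and separation of points from closed sets (including closed singletons, available since Tychonoff spaces are Hausdorff) in the other. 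The only stylistic difference from the paper's surrounding material is that the ultrafilter picture it invokes for \(\beta\N\) would arise here only a posteriori; your cube construction handles arbitrary, not necessarily Tychonoff, domains \(\X\), which is precisely the generality the stated theorem requires.
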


We now remind one construction of the Stone--\v{C}ech compactification \(\beta\N\) of the space of natural numbers \(\N\) (endowed with the discrete topology)
that will prove useful in Example \ref{ex:betaX_vs_gammaX}. The space \(\beta\N\) can be defined as the set of all ultrafilters on \(\N\), equipped with its
\emph{Stone topology} (i.e.\ the topology generated by those sets of the form \(\{\mathcal F:E\in\mathcal F\}\), for \(E\) an arbitrary subset of \(\N\)).
The embedding \(i\colon\N\hookrightarrow\beta\N\) is given by the map associating to a natural number \(n\in\N\) the principal ultrafilter on \(\N\) whose
principal element is \(n\). Given a compact Hausdorff space \((\Y,\tau_\Y)\), a (continuous) map \(\varphi\colon\N\to\Y\) and an ultrafilter \(\mathcal F\) on \(\N\),
we have that \(\varphi_\beta(\mathcal F)\) is given by the ultralimit of \((\varphi(n))_{n\in\N}\subseteq\Y\) with respect to \(\mathcal F\) (whose existence
and uniqueness are guaranteed by the compactness and the Hausdorffness of \(\Y\), respectively).
\subsubsection*{Initial topologies}
Given a set \(\X\), a set of topological spaces \(\{(\Y_i,\tau_i)\}_{i\in I}\) and a set \(\{\varphi_i\}_{i\in I}\) of maps \(\varphi_i\colon\X\to\Y_i\),
we call \emph{initial topology} of \(\{\varphi_i\}_{i\in I}\) the coarsest topology on \(\X\) for which all maps \(\varphi_i\) are continuous. A basis of the initial topology of \(\{\varphi_i\}_{i\in I}\) is given by
\[
\bigg\{\bigcap_{i\in F}\varphi_i^{-1}(U_i)\;\bigg|\;F\subseteq I\text{ finite, }U_i\in\tau_i\text{ for every }i\in F\bigg\}.
\]
A topological space \((\X,\tau)\) is completely regular if and only if \(\tau\) is the initial topology of \(C_b(\X,\tau)\).
\begin{remark}\label{rmk:charact_initial_top}{\rm
The initial topology \(\tau\) of \(\{\varphi_i\}_{i\in I}\) is the unique topology with the following characteristic property: given a topological
space \(({\rm Z},\tau_{\rm Z})\) and a map \(\psi\colon{\rm Z}\to\X\), the map \(\psi\colon({\rm Z},\tau_{\rm Z})\to(\X,\tau)\) is continuous if
and only if \(\varphi_i\circ\psi\colon({\rm Z},\tau_{\rm Z})\to(\Y_i,\tau_i)\) is a continuous map for every \(i\in I\).
\fr}\end{remark}
\begin{remark}\label{rmk:criterion_initial_cpt}{\rm
Let \((\X,\tau)\) be a compact Hausdorff space. Let \(\mathcal S\) be any subset of \(C_b(\X,\tau)\) that \emph{separates the points} of \(\X\),
i.e.\ for any \(x,y\in\X\) with \(x\neq y\) there exists \(f\in\mathcal S\) such that \(f(x)\neq f(y)\). Then it is well known and easy to check
that \(\tau\) coincides with the initial topology of \(\mathcal S\).
\fr}\end{remark}
\subsection{Metric notions}
We now fix some terminology concerning (extended) metric/pseudometric spaces and present some related results.
Let \(\X\) be a set and \(\sfd\colon\X\times\X\to[0,+\infty]\) a function. Then:
\begin{itemize}
\item We say that \(\sfd\) is an \emph{extended pseudodistance}, and that \((\X,\sfd)\) is an \emph{extended pseudometric space},
if \(\sfd\) vanishes on the diagonal of \(\X\times\X\) (i.e.\ \(\sfd(x,x)=0\) for every \(x\in\X\)), it is symmetric (i.e.\ \(\sfd(x,y)=\sfd(y,x)\)
for every \(x,y\in\X\)) and it satisfies the triangle inequality (i.e.\ \(\sfd(x,y)\leq\sfd(x,z)+\sfd(z,y)\) for every \(x,y,z\in\X\)).
\item We say that \(\sfd\) is an \emph{extended distance}, and that \((\X,\sfd)\) is an \emph{extended metric space}, if \(\sfd\) is an extended
pseudodistance that vanishes exactly on the diagonal of \(\X\times\X\) (i.e.\ for any given \(x,y\in\X\) we have that \(\sfd(x,y)=0\) if and only if \(x=y\)).
\item We say that \(\sfd\) is a \emph{pseudodistance}, and that \((\X,\sfd)\) is a \emph{pseudometric space}, if \(\sfd\) is an extended pseudodistance such
that \(\sfd(x,y)<+\infty\) for every \(x,y\in\X\).
\item We say that \(\sfd\) is a \emph{distance}, and that \((\X,\sfd)\) is a \emph{metric space}, if \(\sfd\) is both an extended distance and a pseudodistance.
\end{itemize}
An extended pseudodistance \(\sfd\) on a set \(\X\) induces an equivalence relation \(\mathcal R_\sfd\) on \(\X\) as follows: given any \(x,y\in\X\), we declare
that \(x\mathcal R_\sfd y\) if and only if \(\sfd(x,y)=0\). We denote the associated quotient space by \(\X/\sfd\coloneqq\X/\mathcal R_\sfd\). Moreover, \(\sfd\)
induces an extended distance \([\sfd]\) on \(\X/\sfd\) as follows:
\[
[\sfd](\pi_\sfd(x),\pi_\sfd(y))\coloneqq\sfd(x,y)\quad\text{ for every }x,y\in\X,
\]
where \(\pi_\sfd\colon\X\to\X/\sfd\) denotes the canonical projection map. We say that \([\sfd]\) is the extended distance \emph{induced by \(\sfd\)}.
Note that \(\sfd\) is a pseudodistance if and only if \([\sfd]\) is a distance.
\subsubsection*{Short maps}
We say that a map \(\varphi\colon(\X,\sfd_\X)\to(\Y,\sfd_\Y)\) between two extended pseudometric spaces \((\X,\sfd_\X)\)
and \((\Y,\sfd_\Y)\) is a \emph{short map} provided it is non-expansive, meaning that
\[
\sfd_\Y(\varphi(x),\varphi(y))\leq\sfd_\X(x,y)\quad\text{ for every }x,y\in\X.
\]
If in addition the map \(\varphi\) is also non-contractive (so that \(\sfd_\Y(\varphi(x),\varphi(y))=\sfd_\X(x,y)\)
for all \(x,y\in\X\)), then we say that \(\varphi\) is \emph{distance preserving}.
\begin{remark}\label{rmk:initial_top_Lip_metric_space}{\rm
For any metric space \((\X,\sfd)\), it can be easily checked that the initial topology of the set of all bounded short maps \(f\colon(\X,\sfd)\to\R\)
coincides with the topology induced by \(\sfd\).
\fr}\end{remark}
\subsubsection*{Completeness}
Let \((\X,\sfd)\) be an extended metric space. We remind that a net \((x_i)_{i\in I}\subseteq\X\) is called a \emph{Cauchy net}
in \((\X,\sfd)\) provided \(\inf_{i_0\in I}\sup\{\sfd(x_i,x_{i'}):i,i'\in I,i_0\preceq i,i'\}=0\). The space \((\X,\sfd)\)
is said to be \emph{complete} if every Cauchy net \((x_i)_{i\in I}\) in \((\X,\sfd)\) \emph{converges} to a (necessarily unique)
element \(x\in\X\), i.e.\ \(\inf_{i_0\in I}\sup\{\sfd(x_i,x):i\in I,i_0\preceq i\}=0\). The completeness of an extended metric
space can be equivalently checked on Cauchy sequences only.
\medskip

On a given extended metric space \((\X,\sfd)\), we define the equivalence relation \(\mathcal F_\sfd\) as follows: given any \(x,y\in\X\),
we declare that \(x\mathcal F_\sfd y\) if and only if \(\sfd(x,y)<+\infty\). We then define \(\mathscr F_\sfd\coloneqq\X/\mathcal F_\sfd\).
Note that \(({\rm Z},\sfd|_{{\rm Z}\times{\rm Z}})\) is a metric space for every \({\rm Z}\in\mathscr F_\sfd\), and \(\sfd(z,x)=+\infty\)
for all \(z\in{\rm Z}\) and \(x\in\X\setminus{\rm Z}\). Clearly, we have that \((\X,\sfd)\) is complete if and only if
\(({\rm Z},\sfd|_{{\rm Z}\times{\rm Z}})\) is complete for every \({\rm Z}\in\mathscr F_\sfd\).
\begin{proposition}[Completion of an extended metric space]\label{prop:completion_extmet}
Let \((\X,\sfd)\) be an extended metric space. Then there exist a complete extended metric space \((\bar\X,\bar\sfd)\)
and a short map \({\sf i}\colon\X\to\bar\X\) such that the following universal property holds: if
\((\Y,\sfd_\Y)\) is a complete extended metric space and \(\varphi\colon\X\to\Y\) is a short map, then there exists a unique
short map \(\bar\varphi\colon\bar\X\to\Y\) such that the diagram
\[\begin{tikzcd}
\X \arrow[r,"{\sf i}"] \arrow[dr,swap,"\varphi"] & \bar\X \arrow[d,"\bar\varphi"] \\
& \Y
\end{tikzcd}\]
commutes. Moreover, the map \({\sf i}\colon\X\to\bar\X\) is distance preserving and its image \({\sf i}(\X)\) is dense in \(\bar\X\).
\end{proposition}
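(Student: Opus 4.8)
The plan is to reduce the statement to the classical completion of \emph{ordinary} metric spaces, by exploiting the decomposition of $(\X,\sfd)$ into its finite-distance classes. Recall from the discussion preceding the statement that $\X=\bigsqcup_{{\rm Z}\in\mathscr F_\sfd}{\rm Z}$, that each $({\rm Z},\sfd|_{{\rm Z}\times{\rm Z}})$ is a genuine metric space, and that $\sfd(z,z')=+\infty$ whenever $z,z'$ lie in distinct classes. For each ${\rm Z}\in\mathscr F_\sfd$ I would take the usual metric completion $({\rm Z}^{*},\sfd^{*}_{\rm Z})$, equipped with its distance-preserving map ${\sf i}_{\rm Z}\colon{\rm Z}\to{\rm Z}^{*}$ having dense image and satisfying the classical universal property, and then set
\[
\bar\X\coloneqq\bigsqcup_{{\rm Z}\in\mathscr F_\sfd}{\rm Z}^{*},\qquad
\bar\sfd(a,b)\coloneqq\begin{cases}
\sfd^{*}_{\rm Z}(a,b) & \text{if }a,b\in{\rm Z}^{*}\text{ for some }{\rm Z}\in\mathscr F_\sfd,\\
+\infty & \text{otherwise,}
\end{cases}
\]
together with ${\sf i}\colon\X\to\bar\X$ defined by ${\sf i}|_{\rm Z}\coloneqq{\sf i}_{\rm Z}$.

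The first block of verifications is that $(\bar\X,\bar\sfd)$ is a complete extended metric space and that ${\sf i}$ is distance preserving with dense image. Symmetry and the vanishing of $\bar\sfd$ exactly on the diagonal are immediate from the corresponding properties of the metrics $\sfd^{*}_{\rm Z}$ together with the convention that points in different summands are at distance $+\infty$; the triangle inequality holds within a summand by that of $\sfd^{*}_{\rm Z}$, and for any triple straddling two distinct summands the right-hand side contains a $+\infty$ term. That ${\sf i}$ is distance preserving follows summand by summand, using that ${\sf i}$ sends distinct classes of $\X$ into distinct summands of $\bar\X$. The key structural remark for the remaining two properties is that each ${\rm Z}^{*}$ is clopen in the topology induced by $\bar\sfd$ — open because the unit ball around any of its points is contained in it, closed because its complement is a union of other summands — so that $\mathscr F_{\bar\sfd}=\{{\rm Z}^{*}:{\rm Z}\in\mathscr F_\sfd\}$; density of ${\sf i}(\X)$ then reduces to the density of each ${\sf i}_{\rm Z}({\rm Z})$ in ${\rm Z}^{*}$, and completeness reduces, via the criterion recalled before the statement, to the completeness of each $({\rm Z}^{*},\sfd^{*}_{\rm Z})$.

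For the universal property, let $(\Y,\sfd_\Y)$ be a complete extended metric space and $\varphi\colon\X\to\Y$ a short map. Since $\varphi$ is non-expansive, it carries each finite-distance class ${\rm Z}\in\mathscr F_\sfd$ into a \emph{single} finite-distance class ${\rm W}_{\rm Z}\in\mathscr F_{\sfd_\Y}$ (all pairwise $\sfd_\Y$-distances within $\varphi({\rm Z})$ being finite and "finite distance" being transitive), and $({\rm W}_{\rm Z},\sfd_\Y|_{{\rm W}_{\rm Z}\times{\rm W}_{\rm Z}})$ is a complete \emph{metric} space by the completeness criterion for $\Y$. Hence $\varphi|_{\rm Z}\colon{\rm Z}\to{\rm W}_{\rm Z}$ is a short map into a complete metric space, and the classical universal property of the metric completion provides a unique short map $\varphi^{*}_{\rm Z}\colon{\rm Z}^{*}\to{\rm W}_{\rm Z}\subseteq\Y$ with $\varphi^{*}_{\rm Z}\circ{\sf i}_{\rm Z}=\varphi|_{\rm Z}$. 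Gluing these over $\mathscr F_\sfd$ yields $\bar\varphi\colon\bar\X\to\Y$ with $\bar\varphi|_{{\rm Z}^{*}}=\varphi^{*}_{\rm Z}$, which is short (within a summand because each $\varphi^{*}_{\rm Z}$ is, across summands trivially since $\bar\sfd\equiv+\infty$ there) and satisfies $\bar\varphi\circ{\sf i}=\varphi$ by construction. Uniqueness follows because any short — hence continuous — map $\bar\X\to\Y$ agreeing with $\varphi$ along ${\sf i}$ restricts on each clopen summand ${\rm Z}^{*}$ to a continuous map extending $\varphi|_{\rm Z}$ along the dense subset ${\sf i}_{\rm Z}({\rm Z})$, and such an extension into the Hausdorff space $\Y$ is unique.

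I expect the only genuinely delicate points to be (i) the fact that a short map out of $\X$ really does carry each finite-distance class into a single finite-distance class of the target — this is where the extended-valued nature of the distances enters the argument — and (ii) the clopen-ness of the summands ${\rm Z}^{*}$, which is what licenses the free passage between "global" statements about $(\bar\X,\bar\sfd)$ and "class-by-class" statements; everything else is a routine transcription of the classical metric completion. One could alternatively bypass the appeal to the classical theorem and build $\bar\X$ directly from equivalence classes of Cauchy sequences in $\X$ — observing that every Cauchy sequence is eventually confined to a single class — but the reduction above keeps the bookkeeping lighter.
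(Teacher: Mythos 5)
Your proposal is correct and follows essentially the same route as the paper's proof: decompose \(\X\) into the finite-distance classes \(\mathscr F_\sfd\), complete each class as an ordinary metric space, glue the completions by a disjoint union with \(+\infty\) across summands, and derive the universal property from the fact that a short map sends each class into a single finite-distance class of the target. The additional verifications you spell out (clopen-ness of the summands, the identification \(\mathscr F_{\bar\sfd}=\{{\rm Z}^*\}\)) are correct and only make explicit what the paper leaves implicit.
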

\begin{proof}
For any \({\rm Z}\in\mathscr F_\sfd\), we denote by \((\bar{\rm Z},\bar\sfd_{\rm Z})\), together with \({\sf i}_{\rm Z}\colon{\rm Z}\to\bar{\rm Z}\),
the usual metric completion of \(({\rm Z},\sfd|_{{\rm Z}\times{\rm Z}})\). Then we endow \(\bar\X\coloneqq\bigsqcup_{{\rm Z}\in\mathscr F_\sfd}\bar{\rm Z}\)
with the extended distance \(\bar\sfd\), which we define as
\[
\bar\sfd(x,y)\coloneqq\left\{\begin{array}{ll}
\bar\sfd_{\rm Z}(x,y)\\
+\infty
\end{array}\quad\begin{array}{ll}
\text{ if }x,y\in\bar{\rm Z}\text{ for some }{\rm Z}\in\mathscr F_\sfd,\\
\text{ otherwise.}
\end{array}\right.
\]
Observe that \((\bar\X,\bar\sfd)\) is a complete extended metric space. We denote by \({\sf i}\colon\X\to\bar\X\)
the unique map satisfying \({\sf i}|_{\rm Z}={\sf i}_{\rm Z}\) for every \({\rm Z}\in\mathscr F_\sfd\). Since each
map \({\sf i}_{\rm Z}\colon{\rm Z}\to\bar{\rm Z}\) is distance preserving and with dense image, we deduce that also
\({\rm i}\colon\X\to\bar\X\) is distance preserving and with dense image. Finally, let us check the validity of the
universal property. Fix a complete extended metric space \((\Y,\sfd_\Y)\) and a short map \(\varphi\colon\X\to\Y\).
In particular, we know that each restricted map \(\varphi|_{\rm Z}\colon{\rm Z}\to\Y\) is a short map taking values into
some component \({\rm W}_{\rm Z}\in\mathscr F_{\sfd_\Y}\); thus, since \(({\rm W}_{\rm Z},\sfd_\Y|_{{\rm W}_{\rm Z}\times{\rm W}_{\rm Z}})\)
is a complete metric space, \(\varphi|_{\rm Z}\) can be uniquely extended to a short map
\(\bar\varphi_{\rm Z}\colon\bar{\rm Z}\to{\rm W}_{\rm Z}\subseteq\Y\). Therefore, letting
\(\bar\varphi\colon\bar\X\to\Y\) be given by \(\bar\varphi|_{\bar{\rm Z}}\coloneqq\bar\varphi_{\rm Z}\) for every
\({\rm Z}\in\mathscr F_\sfd\), it is clear that \(\bar\varphi\) is the unique short map from \(\bar\X\) to \(\Y\)
such that \(\bar\varphi\circ{\sf i}=\varphi\). The statement is achieved.
\end{proof}
\subsubsection*{Length and geodesic extended metric spaces}
Let \((\X,\sfd)\) be a given extended metric space. Then the \emph{\(\sfd\)-length} \(\ell(\gamma)\in[0,+\infty]\)
of a continuous curve \(\gamma\colon[0,1]\to\X\) is defined as
\[
\ell(\gamma)\coloneqq\sup\bigg\{\sum_{i=1}^n\sfd(\gamma_{t_i},\gamma_{t_{i-1}})\;\bigg|\;n\in\N,\,0=t_0<t_1<\ldots<t_n=1\bigg\}.
\]
If \(\ell(\gamma)<+\infty\), then the curve \(\gamma\) is said to be \emph{\(\sfd\)-rectifiable}.
The \emph{extended length distance} \(\sfd_\ell\) induced by \(\sfd\) is defined as
\begin{equation}\label{eq:def_d_ell}
\sfd_\ell(x,y)\coloneqq\inf\big\{\ell(\gamma)\;\big|\;\gamma\colon[0,1]\to\X\text{ is }\sfd\text{-rectifiable, }
(\gamma_0,\gamma_1)=(x,y)\big\}\quad\text{ for every }x,y\in\X.
\end{equation}
We say that \((\X,\sfd)\) is a \emph{length space} if \(\sfd_\ell=\sfd\). Clearly, it holds that \((\X,\sfd)\) is a length
space if and only if \(({\rm Z},\sfd|_{{\rm Z}\times{\rm Z}})\) is a length space for every \({\rm Z}\in\mathscr F_\sfd\).
\begin{remark}\label{rmk:geo_functor_mor}{\rm
If \((\X,\sfd)\) is an extended metric space, \((\Y,\sfd_\Y)\) is a length extended metric space
and \(\varphi\colon(\X,\sfd)\to(\Y,\sfd_\Y)\) is a short map, then also \(\varphi\colon(\X,\sfd_\ell)\to(\Y,\sfd_\Y)\)
is a short map. To prove it, it suffices to check that \(\sfd_\Y(\varphi(x),\varphi(y))\leq\sfd_\ell(x,y)\) for every
\(x,y\in\X\) with \(\sfd_\ell(x,y)<+\infty\). Given any \(\varepsilon>0\), we find a \(\sfd\)-rectifiable
curve \(\gamma\colon[0,1]\to\X\) such that \((\gamma_0,\gamma_1)=(x,y)\) and \(\ell(\gamma)\leq\sfd_\ell(x,y)+\varepsilon\).
Hence, for any partition \(0=t_0<t_1<\ldots<t_n=1\) of the interval \([0,1]\), we can estimate
\[
\sum_{i=1}^n\sfd_\Y(\varphi(\gamma_{t_i}),\varphi(\gamma_{t_{i-1}}))\leq\sum_{i=1}^n\sfd(\gamma_{t_i},\gamma_{t_{i-1}})\leq\ell(\gamma)\leq\sfd_\ell(x,y)+\varepsilon.
\]
Taking the supremum over all partitions of \([0,1]\) and exploiting the fact that \(\sfd_\Y\) is a length extended distance,
we deduce that \(\sfd_\Y(\varphi(x),\varphi(y))\leq\ell(\varphi\circ\gamma)\leq\sfd_\ell(x,y)+\varepsilon\),
whence the sought inequality \(\sfd_\Y(\varphi(x),\varphi(y))\leq\sfd_\ell(x,y)\) follows by the arbitrariness
of \(\varepsilon>0\).
\fr}\end{remark}
By a \emph{geodesic} \(\gamma\colon[0,1]\to\X\) in an extended metric space \((\X,\sfd)\) we mean a continuous curve
such that \(\ell(\gamma)=\sfd(\gamma_0,\gamma_1)<+\infty\). We say that \((\X,\sfd)\) is a \emph{geodesic space} if
for every \(x,y\in\X\) with \(\sfd(x,y)<+\infty\) there exists a geodesic \(\gamma\colon[0,1]\to\X\) in \((\X,\sfd)\)
joining \(x\) and \(y\), i.e.\ \((\gamma_0,\gamma_1)=(x,y)\). Clearly, every geodesic space is a length space, and
\((\X,\sfd)\) is a geodesic space if and only if \(({\rm Z},\sfd|_{{\rm Z}\times{\rm Z}})\) is a geodesic space for
every \({\rm Z}\in\mathscr F_\sfd\).
\subsection{Limits and colimits in \texorpdfstring{\({\bf Set}\)}{Set}, \texorpdfstring{\({\bf Top}\)}{Top}
and \texorpdfstring{\({\bf Ext\Psi Met}\)}{ExtPMet}}\label{s:lim_Set_Top_ExtPMet}
In this section, we focus on three categories: of sets, of topological spaces, and of extended pseudometric spaces.
More specifically, we discuss their bicompleteness, together with an explicit description of all limits and colimits;
cf.\ Appendix \ref{app:categories} for a reminder on this terminology. The bicompleteness of the category of sets
and of the one of topological spaces is shown e.g.\ in \cite{riehl2017category}. The bicompleteness of the category
of extended pseudometric spaces seems to be a folklore result, which can be easily proved.
\subsubsection*{Limits and colimits in \texorpdfstring{\({\bf Set}\)}{Set}}
We denote by \({\bf Set}\) the category of sets, where the morphisms between two sets \(\X\) and \(\Y\)
are given by all maps from \(\X\) to \(\Y\). It admits all (co)products and (co)equalisers:
\begin{itemize}
\item Let \(\{\X_i\}_{i\in I}\) be a set of sets. Then \((\prod\X_\star,\{\pi_i\}_{i\in I})\)
is the product of \(\{\X_i\}_{i\in I}\) in \({\bf Set}\), where \(\prod\X_\star\) denotes the Cartesian product
\(\prod_{i\in I}\X_i\) and \(\pi_i\colon\prod\X_\star\to\X_i\) the canonical projection map, given by
\(\pi_i(x_\star)\coloneqq x_i\) for every \(x_\star=(x_i)_{i\in I}\in\prod\X_\star\). 
\item Let \(\X\), \(\Y\) be sets and \(\varphi,\psi\colon\X\to\Y\) be maps. Define
\(\X_{\varphi,\psi}\coloneqq\{x\in\X:\varphi(x)=\psi(x)\}\) and denote by \(\iota_{\varphi,\psi}\colon\X_{\varphi,\psi}\to\X\)
the inclusion map. Then \((\X_{\varphi,\psi},\iota_{\varphi,\psi})\) is the equaliser of \(\varphi\) and \(\psi\) in \({\bf Set}\).
\item Let \(\{\X_i\}_{i\in I}\) be a set of sets. Then \((\coprod\X_\star,\{\iota_i\}_{i\in I})\) is the
coproduct of \(\{\X_i\}_{i\in I}\) in \({\bf Set}\), where \(\coprod\X_\star\) denotes the disjoint union
\(\bigsqcup_{i\in I}\X_i\) and \(\iota_i\colon\X_i\to\coprod\X_\star\) the inclusion map. 
\item Let \(\X\), \(\Y\) be sets and \(\varphi,\psi\colon\X\to\Y\) be maps. We define the equivalence relation
\(\mathcal R_{\varphi,\psi}\) on \(\Y\) as the smallest equivalence relation such that
\(\varphi(x)\mathcal R_{\varphi,\psi}\psi(x)\) for all \(x\in\X\) (i.e.\ the intersection of all such equivalence relations).
We denote by \(\Y_{\varphi,\psi}\) the quotient set \(\Y/\mathcal R_{\varphi,\psi}\)
and by \(\pi_{\varphi,\psi}\colon\Y\to\Y_{\varphi,\psi}\) the projection map. Then \((\Y_{\varphi,\psi},\pi_{\varphi,\psi})\)
is the coequaliser of \(\varphi\) and \(\psi\) in \({\bf Set}\).
\end{itemize}
Therefore, the category \({\bf Set}\) is bicomplete, thanks to Theorem \ref{thm:exist_thm_lim}.
\subsubsection*{Limits and colimits in \texorpdfstring{\({\bf Top}\)}{Top}}
We denote by \({\bf Top}\) the category of topological spaces, where continuous maps are the morphisms.
Then \({\bf Top}\) is a bicomplete category. More specifically:
\begin{itemize}
\item Let \(\{(\X_i,\tau_i)\}_{i\in I}\) be a set of topological spaces.
Then \(((\prod\X_\star,\otimes_{i\in I}\tau_i),\{\pi_i\}_{i\in I})\) is the product of \(\{(\X_i,\tau_i)\}_{i\in I}\)
in \({\bf Top}\), where \((\prod\X_\star,\{\pi_i\}_{i\in I})\) denotes the product of \(\{\X_i\}_{i\in I}\) in \({\bf Set}\),
while \(\otimes_{i\in I}\tau_i\) is the product topology on \(\prod\X_\star\).
\item Let \((\X,\tau_\X)\), \((\Y,\tau_\Y)\) be topological spaces and \(\varphi,\psi\colon\X\to\Y\) be continuous maps.
Then \(((\X_{\varphi,\psi},\tau_\X\llcorner\X_{\varphi,\psi}),\iota_{\varphi,\psi})\) is the equaliser of \(\varphi\)
and \(\psi\) in \({\bf Top}\), where \((\X_{\varphi,\psi},\iota_{\varphi,\psi})\) denotes the equaliser of \(\varphi\)
and \(\psi\) in \({\bf Set}\), while \(\tau_\X\llcorner\X_{\varphi,\psi}\) is the relative topology of \(\tau_\X\).
\item Let \(\{(\X_i,\tau_i)\}_{i\in I}\) be a set of topological spaces. Then
\(((\coprod\X_\star,\bigsqcup_{i\in I}\tau_i),\{\iota_i\}_{i\in I})\) is the coproduct of \(\{(\X_i,\tau_i)\}_{i\in I}\)
in \({\bf Top}\), where \((\coprod\X_\star,\{\iota_i\}_{i\in I})\) denotes the coproduct of \(\{\X_i\}_{i\in I}\) in \({\bf Set}\),
while the topology \(\bigsqcup_{i\in I}\tau_i\) on \(\coprod\X_\star\) is defined as
\[
\bigsqcup_{i\in I}\tau_i\coloneqq\bigg\{\bigsqcup_{i\in I}U_i\;\bigg|\;U_i\in\tau_i\text{ for every }i\in I\bigg\}.
\]
\item Let \((\X,\tau_\X)\), \((\Y,\tau_\Y)\) be topological spaces and \(\varphi,\psi\colon\X\to\Y\) be continuous maps.
Then \(((\Y_{\varphi,\psi},\tau_{\varphi,\psi}),\pi_{\varphi,\psi})\) is the coequaliser of \(\varphi\) and \(\psi\)
in \({\bf Top}\), where \((\Y_{\varphi,\psi},\pi_{\varphi,\psi})\) denotes the coequaliser of \(\varphi\) and \(\psi\)
in \({\bf Set}\), while \(\tau_{\varphi,\psi}\) is the quotient topology of \(\tau_\Y\) on
\(\Y_{\varphi,\psi}=\Y/\mathcal R_{\varphi,\psi}\).
\end{itemize}
\subsubsection*{Limits and colimits in \texorpdfstring{\({\bf Ext\Psi Met}\)}{ExtPMet}}
We denote by \({\bf Ext\Psi Met}\) the category of extended pseudometric spaces, where short maps are the morphisms.
Then \({\bf Ext\Psi Met}\) is a bicomplete category. More specifically:
\begin{itemize}
\item Let \(\{(\X_i,\sfd_i)\}_{i\in I}\) be a set of extended pseudometric spaces.
Then \(((\prod\X_\star,\otimes_{i\in I}\sfd_i),\{\pi_i\}_{i\in I})\) is the product of \(\{(\X_i,\sfd_i)\}_{i\in I}\)
in \({\bf Ext\Psi Met}\), where \((\prod\X_\star,\{\pi_i\}_{i\in I})\) denotes the product of \(\{\X_i\}_{i\in I}\)
in \({\bf Set}\), while \(\otimes_{i\in I}\sfd_i\) is the extended pseudodistance on \(\prod\X_\star\) given by
\[
(\otimes_{i\in I}\sfd_i)(x_\star,y_\star)\coloneqq\sup_{i\in I}\sfd_i(x_i,y_i)\quad\text{ for every }x_\star,y_\star\in\prod\X_\star.
\]
\item Let \((\X,\sfd_\X)\), \((\Y,\sfd_\Y)\) be extended pseudometric spaces and \(\varphi,\psi\colon\X\to\Y\) be short maps.
Then \(((\X_{\varphi,\psi},\sfd_\X|_{\X_{\varphi,\psi}\times\X_{\varphi,\psi}}),\iota_{\varphi,\psi})\) is the equaliser of
\(\varphi\) and \(\psi\) in the category \({\bf Ext\Psi Met}\), where \((\X_{\varphi,\psi},\iota_{\varphi,\psi})\) denotes
the equaliser of \(\varphi\) and \(\psi\) in \({\bf Set}\).
\item Let \(\{(\X_i,\sfd_i)\}_{i\in I}\) be a set of extended pseudometric spaces. Then
\(((\coprod\X_\star,\bigsqcup_{i\in I}\sfd_i),\{\iota_i\}_{i\in I})\) is the coproduct of \(\{(\X_i,\sfd_i)\}_{i\in I}\)
in \({\bf Ext\Psi Met}\), where \((\coprod\X_\star,\{\iota_i\}_{i\in I})\) denotes the coproduct of \(\{\X_i\}_{i\in I}\)
in \({\bf Set}\), while the extended pseudodistance \(\bigsqcup_{i\in I}\sfd_i\) on \(\coprod\X_\star\) is defined as
\[
\Big(\bigsqcup_{i\in I}\sfd_i\Big)(x,y)\coloneqq
\left\{\begin{array}{ll}
\sfd_i(x,y)\\
+\infty
\end{array}\quad\begin{array}{ll}
\text{ if }x,y\in\X_i\text{ for some }i\in I,\\
\text{ otherwise.}
\end{array}\right.
\]
\item Let \((\X,\sfd_\X)\), \((\Y,\sfd_\Y)\) be extended pseudometric spaces and \(\varphi,\psi\colon\X\to\Y\) be short maps.
Let \((\Y_{\varphi,\psi},\pi_{\varphi,\psi})\) denote the coequaliser of \(\varphi\) and \(\psi\) in \({\bf Set}\).
Let us define the extended pseudodistance \(\sfd_{\varphi,\psi}\) on \(\Y_{\varphi,\psi}\) as
\[
\sfd_{\varphi,\psi}(\pi_{\varphi,\psi}(x),\pi_{\varphi,\psi}(y))\coloneqq
\inf_{(z_1,z'_1,\ldots,z_n,z'_n)}\sum_{i=1}^n\sfd_\Y(z_i,z'_i),
\]
where the infimum is taken over all \(n\in\N\) and  \((z_1,z'_1,\ldots,z_n,z'_n)\in\Y^{2n}\)
such that \(z_1=x\), \(z'_{i-1}\mathcal R_{\varphi,\psi}z_i\) for every \(i=2,\ldots,n\), and
\(z'_n=y\). Then \(((\Y_{\varphi,\psi},\sfd_{\varphi,\psi}),\pi_{\varphi,\psi})\) is the coequaliser
of \(\varphi\) and \(\psi\) in \({\bf Ext\Psi Met}\).
\end{itemize}
\subsection{Extended metric-topological spaces}
In this section, we first introduce the auxiliary notion of \emph{pre-extended pseudometric-topological space}. After that, we reiterate
some definitions and results regarding \emph{extended metric-topological spaces}, adopting the axiomatisation proposed by Ambrosio, Erbar
and Savar\'{e} in \cite[Definition 4.1]{AmbrosioErbarSavare16}. Some of the material we present is taken also from the works
\cite{Sav:22,Pas:Tai:25}, where extended metric-topological spaces have been researched further.
\subsubsection*{Pre-e.pm.t.\ spaces}
First, let us introduce the concept of pre-extended (pseudo)metric-topological space, by which we simply mean a topological space equipped with an extended
(pseudo)distance, with no compatibility requirements whatsoever.
\begin{definition}[Pre-extended pseudometric-topological space]\label{def:pre-e.pm.t.}
We say that a triple \((\X,\tau,\sfd)\) is a \emph{pre-extended pseudometric-topological space}, or a \emph{pre-e.pm.t.\ space} for short, provided
\begin{align*}
(\X,\tau)&\quad\text{ is a topological space,}\\
(\X,\sfd)&\quad\text{ is an extended pseudometric space}.
\end{align*}
If \((\X,\sfd)\) is an extended metric space, then we say that \((\X,\tau,\sfd)\) is a \emph{pre-extended metric-topological space}, or a \emph{pre-e.m.t.\ space}.
\end{definition}
\begin{remark}\label{rmk:d_lsc_implies_Haus}{\rm
If \((\X,\tau,\sfd)\) is a pre-e.m.t.\ space such that \(\sfd\colon\X\times\X\to[0,+\infty]\) is a \(\tau\otimes\tau\)-lower
semicontinuous function, then \((\X,\tau)\) is Hausdorff. Indeed, given any \( \bar x,\bar y\in\X\) with \(\bar x\neq\bar y\),
we have that \((\bar x,\bar y)\in\{(x,y)\in\X\times\X:\sfd(x,y)>0\}\in\tau\otimes\tau\), thus we find \(U,V\in\tau\)
such that \(\bar x\in U\), \(\bar y\in V\) and \(U\times V\subseteq\{(x,y)\in\X\times\X:\sfd(x,y)>0\}\), which means
that \(U\cap V=\varnothing\), as desired.
\fr}\end{remark}
\begin{definition}[Continuous-short map]
Let \((\X,\tau_\X,\sfd_\X)\), \((\Y,\tau_\Y,\sfd_\Y)\) be pre-e.pm.t.\ spaces. Then we say that a map \(\varphi\colon\X\to\Y\)
is a \emph{continuous-short map} provided
\begin{align*}
\varphi\colon(\X,\tau_\X)\to(\Y,\tau_\Y)&\quad\text{ is continuous,}\\
\varphi\colon(\X,\sfd_\X)\to(\Y,\sfd_\Y)&\quad\text{ is a short map}.
\end{align*}
Moreover, we denote by \(CS((\X,\tau_\X,\sfd_\X);(\Y,\tau_\Y,\sfd_\Y))\) the collection of all continuous-short maps
from \((\X,\tau_\X,\sfd_\X)\) to \((\Y,\tau_\Y,\sfd_\Y)\).
\end{definition}

Let \((\X,\tau,\sfd)\) be a pre-e.pm.t.\ space. In accordance with \cite{Sav:22}, we shall utilise the notation
\[
\LIP_{b,1}(\X,\tau,\sfd)\coloneqq CS((\X,\tau,\sfd);\R)\cap C_b(\X,\tau),
\]
where the target space \(\R\) is endowed with the Euclidean topology and distance.
We say that the extended pseudodistance \(\sfd\) \emph{can be \(\tau\)-recovered} provided it holds that
\begin{equation}\label{eq:d_recover_from_tau}
\sfd(x,y)=\sup\big\{|f(x)-f(y)|\;\big|\;f\in\LIP_{b,1}(\X,\tau,\sfd)\big\}\quad\text{ for every }x,y\in\X.
\end{equation}
Note that \eqref{eq:d_recover_from_tau} is equivalent to
\(\sfd(x,y)\leq\sup\{|f(x)-f(y)|:f\in\LIP_{b,1}(\X,\tau,\sfd)\}\) for every \(x,y\in\X\).
\begin{remark}\label{rmk:d_lsc}{\rm
If \((\X,\tau,\sfd)\) is a pre-e.pm.t.\ space such that \(\sfd\) can be \(\tau\)-recovered, then
\[
\sfd\colon\X\times\X\to[0,+\infty]\quad\text{ is }\tau\otimes\tau\text{-lower semicontinuous.}
\]
Indeed, \(\X\times\X\ni(x,y)\mapsto|f(x)-f(y)|\in\R\) is \(\tau\otimes\tau\)-continuous for all
\(f\in\LIP_{b,1}(\X,\tau,\sfd)\), thus \(\sfd\) is \(\tau\otimes\tau\)-lower semicontinuous since
it is a supremum of \(\tau\otimes\tau\)-continuous functions.
\fr}\end{remark}
\begin{definition}[Embedding of pre-e.pm.t.\ spaces]
Let \((\X,\tau_\X,\sfd_\X)\), \((\Y,\tau_\Y,\sfd_\Y)\) be pre-e.pm.t.\ spaces. Then we say that a map \(\iota\colon\X\to\Y\)
is an \emph{embedding} if the following conditions hold:
\begin{itemize}
\item[\(\rm i)\)] \(\iota\colon(\X,\tau_\X)\to(\Y,\tau_\Y)\) is a homeomorphism onto its image \(\iota(\X)\),
\item[\(\rm ii)\)] \(\iota\colon(\X,\sfd_\X)\to(\Y,\sfd_\Y)\) is distance preserving.
\end{itemize}
\end{definition}
\subsubsection*{Definition of e.m.t.\ space}
We recall the notion of extended (pseudo)metric-topological space \cite{AmbrosioErbarSavare16}:
\begin{definition}[Extended (pseudo)metric-topological space]\label{def:emt}
We say that a pre-e.pm.t.\ space \((\X,\tau,\sfd)\) is an \emph{extended pseudometric-topological space}, or an \emph{e.pm.t.\ space} for short, provided:
\begin{itemize}
\item[\(\rm i)\)] \(\tau\) is the initial topology of \(\LIP_{b,1}(\X,\tau,\sfd)\),
\item[\(\rm ii)\)] \(\sfd\) can be \(\tau\)-recovered.
\end{itemize}
If in addition \((\X,\sfd)\) is an extended metric space, then we say that \((\X,\tau,\sfd)\) is an \emph{extended metric-topological space}, or an \emph{e.m.t.\ space} for short.
\end{definition}

Note that if \((\X,\tau,\sfd)\) is an e.pm.t.\ space, then \(\tau\) is a fortiori the initial topology of \(C_b(\X,\tau)\), so that \((\X,\tau)\) is completely regular.
Assuming in addition that \((\X,\tau,\sfd)\) is an e.m.t.\ space, the fact that \(\sfd\) can be \(\tau\)-recovered implies that 
\(\sfd\colon\X\times\X\to[0,+\infty]\) is \(\tau\otimes\tau\)-lower semicontinuous (by Remark \ref{rmk:d_lsc}), so that in particular
\((\X,\tau)\) is Hausdorff (thus, a Tychonoff space) by Remark \ref{rmk:d_lsc_implies_Haus}.
\medskip

We say that \((\X,\tau,\sfd)\) is a \emph{one-point e.m.t.\ space} if \(\X\) is a singleton, so that necessarily \(\tau=\{\varnothing,\X\}\) and \(\sfd\colon\X\times\X\to\{0\}\).
We also have the \emph{empty e.m.t.\ space} \((\varnothing,\{\varnothing\},\varnothing\times\varnothing\to[0,+\infty])\).
\begin{remark}\label{rmk:sub_emt}{\rm
Let \((\X,\tau,\sfd)\) be an e.m.t.\ space and fix any \(E\subseteq\X\). Letting \(\tau\llcorner E\coloneqq\{U\cap E:U\in\tau\}\) be the relative topology,
it can be readily checked that \((E,\tau\llcorner E,\sfd|_{E\times E})\) is an e.m.t.\ space.
\fr}\end{remark}

The following result was obtained in \cite[Corollary 3.3]{Pas:Tai:25}, as a corollary of \cite[Theorem 2.4]{Mat:00}.
\begin{theorem}[Extension theorem]\label{thm:ext_CS}
Let \((\X,\tau,\sfd)\) be an e.m.t.\ space. Let \(K\subseteq\X\) be a \(\tau\)-compact set and \(f\in\LIP_{b,1}(K,\tau\llcorner K,\sfd|_{K\times K})\).
Then there exists \(\bar f\in\LIP_{b,1}(\X,\tau,\sfd)\) such that \(\bar f|_K=f\).
\end{theorem}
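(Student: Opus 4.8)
The plan is to extend $f$ in two stages. First I would extend $f$ as a bounded short function on $(K,\sfd|_{K\times K})$ to a bounded short function $g$ on the whole of $(\X,\sfd)$, ignoring the topology entirely; then I would combine $g$ with the topological structure of $(\X,\tau)$ to manufacture a function that lies in $\LIP_{b,1}(\X,\tau,\sfd)$ and still restricts to $f$ on $K$. The first stage is the classical McShane--Whitney extension: since $f$ is $1$-Lipschitz and bounded on $K$, say $m\le f\le M$ with $M-m=:L$, one sets
\[
g(x)\coloneqq\min\Big(M,\ \inf_{y\in K}\big(f(y)+\sfd(x,y)\big)\Big)\quad\text{for }x\in\X.
\]
Standard arguments show $g\colon(\X,\sfd)\to\R$ is $1$-Lipschitz, that $m\le g\le M$ (the $\min$ with $M$ takes care of the upper bound, and the $\inf$ is $\ge\inf_{y\in K}(f(y)-0)$ ... wait, one must be slightly careful: $g(x)\ge m$ fails in general since $\sfd$ can be $+\infty$; but $g(x)\ge\inf_K f=m$ holds because each term $f(y)+\sfd(x,y)\ge f(y)\ge m$), and that $g|_K=f$ (for $x\in K$ the term $y=x$ gives $f(x)$, and $1$-Lipschitzness of $f$ gives $f(y)+\sfd(x,y)\ge f(x)$ for all $y\in K$, while $f(x)\le M$). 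So $g$ is a bounded short function on $(\X,\sfd)$ agreeing with $f$ on $K$, but there is no reason for $g$ to be $\tau$-continuous.

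The second stage repairs continuity. Here I would invoke the defining property of an e.m.t.\ space, namely that $\sfd$ can be $\tau$-recovered: for every $x,x'\in\X$,
\[
\sfd(x,x')=\sup\big\{|h(x)-h(x')|\ \big|\ h\in\LIP_{b,1}(\X,\tau,\sfd)\big\}.
\]
Using this, for a fixed point $y\in K$ the function $x\mapsto\sfd(x,y)$ is a supremum of functions of the form $x\mapsto |h(x)-h(y)|$ with $h\in\LIP_{b,1}(\X,\tau,\sfd)$; each such function is $\tau$-continuous and $1$-Lipschitz, hence lies (after truncating to the interval $[0,L]$ and adding the constant $f(y)$) essentially in $\LIP_{b,1}$. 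Concretely, $x\mapsto\min\big(M,\ f(y)+\sup_{h}|h(x)-h(y)|\big)$ is a pointwise supremum of $\tau$-continuous $1$-Lipschitz functions bounded in $[m,M]$, hence is itself $\tau$-lower semicontinuous, $1$-Lipschitz, and $[m,M]$-valued; and then
\[
g(x)=\inf_{y\in K}\min\Big(M,\ f(y)+\sfd(x,y)\Big)
\]
is an infimum over $y\in K$ of such functions, hence $\tau$-upper semicontinuous. Thus $g$ is automatically $\tau$-upper semicontinuous; the real work is to also get lower semicontinuity, and this is where compactness of $K$ must enter.

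The main obstacle, therefore, is upgrading $g$ from upper semicontinuous to continuous, and here is where I would actually cite the results the paper has already set up: rather than redo the McShane construction by hand, the cleanest route is to appeal to \cite[Theorem 2.4]{Mat:00} (Matou\v{s}kov\'a-type simultaneous extension of Lipschitz functions, which is precisely the ingredient attributed in the paper to the cited corollary of Pasqualetto--Taipalus). Concretely, one notes that $(K,\tau\llcorner K,\sfd|_{K\times K})$ is itself an e.m.t.\ space by Remark \ref{rmk:sub_emt}, that $K$ is $\tau$-compact hence $\tau$-closed in the Hausdorff space $(\X,\tau)$, and then the extension theorem from \cite{Mat:00} (or the McShane formula \emph{combined with} a partition-of-unity / Michael selection argument valid because $K$ is compact and $(\X,\tau)$ is completely regular) produces a $\tau$-continuous $1$-Lipschitz bounded extension $\bar f$ of $f$. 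The delicate point to check is that the abstract extension theorem indeed applies in this extended-distance, non-metrizable topological setting; I expect the compactness of $K$ to be exactly what makes the pathologies of an extended pseudodistance (the $+\infty$ values, the failure of $\tau$ to be induced by $\sfd$) harmless, since on $K$ the two structures interact tamely. Once such a $\bar f$ is produced, one sets $\bar f\coloneqq\max\big(m,\min(M,\bar f)\big)$ to be safe about the bounds — though the construction already delivers these — and the proof is complete.
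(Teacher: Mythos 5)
The paper does not actually prove this theorem: it is imported verbatim from \cite[Corollary 3.3]{Pas:Tai:25}, which is there deduced from \cite[Theorem 2.4]{Mat:00}. Your proposal ultimately falls back on that very same citation, so in its final form it lands where the paper does; the problem lies in the self-contained argument you sketch on the way there, which contains a false step and never closes the one point that carries all the content.

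The McShane stage is fine: \(g(x)=\min\big(M,\inf_{y\in K}(f(y)+\sfd(x,y))\big)\) is bounded, short with respect to \(\sfd\) (with the convention \(\infty+t=\infty\)), and restricts to \(f\) on \(K\). The gap is the continuity repair. Your deduction that \(g\) is \(\tau\)-upper semicontinuous is a non sequitur: each map \(x\mapsto\min(M,f(y)+\sfd(x,y))\) is \(\tau\)-\emph{lower} semicontinuous (since \(\sfd(\cdot,y)\) is a supremum of continuous functions, cf.\ Remark \ref{rmk:d_lsc}), and an infimum of lower semicontinuous functions is in general neither upper nor lower semicontinuous; the standard fact you are implicitly invoking requires the family to consist of upper semicontinuous (e.g.\ continuous) functions. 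Moreover \(\sfd(\cdot,y)\) is typically \emph{not} \(\tau\)-upper semicontinuous -- take a Hilbert space with its weak topology and norm distance, where \(\|x_i-y\|\) can jump up along a weakly convergent net -- so there is no reason for \(g\) to be upper semicontinuous, let alone continuous, and compactness of \(K\) does not repair this. Consequently the ``second stage'' never produces a \(\tau\)-continuous extension, and the alternative ``partition-of-unity / Michael selection'' route is not developed (complete regularity alone does not yield the needed partitions of unity). What remains is the bare citation of \cite[Theorem 2.4]{Mat:00}, with the decisive applicability check -- that the extension result survives extended (\(+\infty\)-valued) distances and non-compact, merely Tychonoff \((\X,\tau)\), which is exactly what \cite[Corollary 3.3]{Pas:Tai:25} establishes -- explicitly left open by your own admission. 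Since the paper itself only cites the result, relying on the citation is not a wrong approach, but as a proof the proposal adds nothing beyond the attribution, and the direct portion as written would fail.
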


Let us now prove that, given a compact pre-e.m.t.\ space \((\X,\tau,\sfd)\), the \(\tau\otimes\tau\)-lower semicontinuity of \(\sfd\)
is not only a necessary but also a sufficient condition for \((\X,\tau,\sfd)\) being an e.m.t.\ space:
\begin{proposition}\label{prop:charact_cpt_emt}
Let \((\X,\tau,\sfd)\) be a pre-e.m.t.\ space such that \((\X,\tau)\) is compact. Then \((\X,\tau,\sfd)\) is an e.m.t.\ space
if and only if \(\sfd\colon\X\times\X\to[0,+\infty]\) is \(\tau\otimes\tau\)-lower semicontinuous.
\end{proposition}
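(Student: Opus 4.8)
The \emph{only if} implication is immediate: if $(\X,\tau,\sfd)$ is an e.m.t.\ space, then $\sfd$ can be $\tau$-recovered, hence it is $\tau\otimes\tau$-lower semicontinuous by Remark \ref{rmk:d_lsc}. So the plan is to establish the converse. Assume that $(\X,\tau)$ is compact and that $\sfd$ is $\tau\otimes\tau$-lower semicontinuous; then $(\X,\tau)$ is compact Hausdorff, by Remark \ref{rmk:d_lsc_implies_Haus}. I would then verify the two axioms of Definition \ref{def:emt}: that $\sfd$ can be $\tau$-recovered, and that $\tau$ is the initial topology of $\LIP_{b,1}(\X,\tau,\sfd)$.

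The heart of the matter, and the step I expect to be the main obstacle, is producing enough continuous-short real functions, because the obvious candidates -- the truncated distance functions $g\coloneqq\sfd(\cdot,y)\wedge c$ -- are $1$-Lipschitz and bounded but in general only $\tau$-lower semicontinuous. The idea is to replace $g$ by its $\sfd$-inf-convolution and to exploit the compactness of $(\X,\tau)$ together with the lower semicontinuity of $\sfd$ to recover $\tau$-continuity. Fix $x,y\in\X$ with $\sfd(x,y)>0$ and any $c\in\R$ with $0<c<\sfd(x,y)$, so that $g$ is bounded, $1$-Lipschitz with respect to $\sfd$, and $\tau$-lower semicontinuous (being the minimum of the constant $c$ and the section $z\mapsto\sfd(z,y)$, which is $\tau$-lower semicontinuous as the composition of the continuous map $z\mapsto(z,y)$ with $\sfd$), with $g(y)=0$ and $g(x)=c$. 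Define $f(z)\coloneqq\inf_{w\in\X}\big[g(w)+\sfd(z,w)\big]$ for every $z\in\X$. One checks at once that $0\le f\le g$ (the lower bound from $\sfd\ge 0$, the upper one by taking $w=z$), that $f$ is $1$-Lipschitz with respect to $\sfd$ by the standard inf-convolution estimate, that $f(x)=c$ (since $g$ is $1$-Lipschitz, $g(w)+\sfd(x,w)\ge g(x)=c$ for every $w$), and that $f(y)=0$. The crucial claim is that $f$ is $\tau$-continuous: it is automatically $\tau$-upper semicontinuous, being an infimum of the $\tau$-lower semicontinuous maps $z\mapsto g(w)+\sfd(z,w)$, and for $\tau$-lower semicontinuity one argues with nets. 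Given $z_i\to z^*$ in $\tau$, each infimum defining $f(z_i)$ is attained at some $w_i\in\X$ (a $\tau$-lower semicontinuous function attains its minimum on the compact space $\X$); after passing to a subnet along which $f(z_i)$ converges to $\liminf_i f(z_i)$ and (by compactness again) $w_i\to w^*$, the lower semicontinuity of $g$ and of $\sfd$ yields
\[
\liminf_i f(z_i)=\liminf_i\big[g(w_i)+\sfd(z_i,w_i)\big]\ge\liminf_i g(w_i)+\liminf_i\sfd(z_i,w_i)\ge g(w^*)+\sfd(z^*,w^*)\ge f(z^*).
\]
Hence $f\in\LIP_{b,1}(\X,\tau,\sfd)$ and $|f(x)-f(y)|=c$.

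Letting $c\uparrow\sfd(x,y)$, and recalling that every $h\in\LIP_{b,1}(\X,\tau,\sfd)$ satisfies $|h(x)-h(y)|\le\sfd(x,y)$ -- and that the case $\sfd(x,y)=0$ is trivial -- I would conclude that $\sfd$ can be $\tau$-recovered. Finally, since $\sfd$ is an extended \emph{distance}, for $x\ne y$ this gives $\sup\{|h(x)-h(y)|:h\in\LIP_{b,1}(\X,\tau,\sfd)\}=\sfd(x,y)>0$, so that $\LIP_{b,1}(\X,\tau,\sfd)$ separates the points of the compact Hausdorff space $(\X,\tau)$; being a subfamily of $C_b(\X,\tau)$, Remark \ref{rmk:criterion_initial_cpt} then shows that $\tau$ is the initial topology of $\LIP_{b,1}(\X,\tau,\sfd)$. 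Both axioms of Definition \ref{def:emt} being satisfied, $(\X,\tau,\sfd)$ is an e.m.t.\ space.
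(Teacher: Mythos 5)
There is a genuine gap, and it sits exactly at the step you yourself identify as ``the heart of the matter''. Your function \(g=\sfd(\cdot,y)\wedge c\) is already \(1\)-Lipschitz with respect to \(\sfd\), and for any such function the inf-convolution is the identity: the inequality \(g(w)+\sfd(z,w)\ge g(z)\) -- which you invoke only at the point \(x\) to get \(f(x)=c\) -- holds at \emph{every} \(z\), so \(f\ge g\), and together with \(f\le g\) this forces \(f=g\). Hence your construction returns the truncated distance function unchanged, and that function is in general only \(\tau\)-lower semicontinuous, not \(\tau\)-continuous. A concrete counterexample: let \(\X=\{0\}\cup\{1/n:n\in\N\}\) with the Euclidean topology \(\tau\) and the \(1\)-discrete distance \(\sfd\) (\(\sfd(a,b)=1\) for \(a\ne b\)); this \(\sfd\) is \(\tau\otimes\tau\)-lower semicontinuous, yet \(\sfd(\cdot,0)\wedge c=c\,\1_{\X\setminus\{0\}}\) is discontinuous at \(0\), and your \(f\) coincides with it. Correspondingly, the justification of upper semicontinuity is false as stated: an infimum of \(\tau\)-lower semicontinuous functions is not automatically \(\tau\)-upper semicontinuous (an infimum of \emph{continuous}, or of upper semicontinuous, functions would be). Your lower-semicontinuity argument via attained minimisers and compactness is correct, but it only re-proves what was already known about \(g\).

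The underlying obstruction is structural: in an e.m.t.\ space the topology \(\tau\) is strictly coarser than the \(\sfd\)-topology, so ``bounded and \(\sfd\)-short'' does not imply ``\(\tau\)-continuous'', and inf-convolving against \(\sfd\) cannot improve \(\tau\)-regularity of a function that is already \(\sfd\)-short. This is why the paper does not construct the separating function by hand: after truncating to \(\sfd\wedge\lambda\) it extends the two-point function \(\tilde f(x)=0\), \(\tilde f(y)=\lambda\) to a global continuous-short function by appealing to Matou\v{s}kov\'{a}'s extension theorem \cite[Corollary 2.5]{Mat:00} (cf.\ Theorem \ref{thm:ext_CS}), which is the genuinely nontrivial input. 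The remainder of your argument -- necessity via Remark \ref{rmk:d_lsc}, and the deduction that \(\tau\)-recovery yields point separation and hence, by Remark \ref{rmk:criterion_initial_cpt}, that \(\tau\) is the initial topology of \(\LIP_{b,1}(\X,\tau,\sfd)\) -- matches the paper and is fine; only the production of the continuous-short separating function needs to be replaced.
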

\begin{proof}
Necessity follows from Remark \ref{rmk:d_lsc}. To prove sufficiency, assume \(\sfd\) is \(\tau\otimes\tau\)-lower semicontinuous.
Let us check that \(\sfd\) can be \(\tau\)-recovered. Fix any \(x,y\in\X\) and \(\lambda\in(0,\sfd(x,y))\). Then the
truncated distance \(\sfd\wedge\lambda\colon\X\times\X\to[0,\lambda]\) is \(\tau\otimes\tau\)-lower semicontinuous.
We define \(\tilde f\colon\{x,y\}\to\R\) as \(\tilde f(x)\coloneqq 0\) and \(\tilde f(y)\coloneqq\lambda\). Since
\(|\tilde f(x)-\tilde f(y)|=\lambda=(\sfd\wedge\lambda)(x,y)\), we have that \(\tilde f\colon(\X,\tau,\sfd\wedge\lambda)\to\R\)
is a continuous-short map. Therefore, we know from \cite[Corollary 2.5]{Mat:00} that there exists a continuous-short map
\(f\colon(\X,\tau,\sfd\wedge\lambda)\to[0,\lambda]\) such that \(f(x)=\tilde f(x)=0\) and \(f(y)=\tilde f(y)=\lambda\).
Since \(\sfd\wedge\lambda\leq\sfd\), we have that \(f\in\LIP_{b,1}(\X,\tau,\sfd)\). In particular, we have that
\(\sup\{|g(x)-g(y)|:g\in\LIP_{b,1}(\X,\tau,\sfd)\}\geq|f(x)-f(y)|=\lambda\). By the arbitrariness of \(\lambda,x,y\),
we conclude that \(\sfd\) can be \(\tau\)-recovered. In particular, \(\LIP_{b,1}(\X,\tau,\sfd)\) separates the points
of \(\X\), thus Remark \ref{rmk:criterion_initial_cpt} guarantees that \(\tau\) is the initial topology of \(\LIP_{b,1}(\X,\tau,\sfd)\).
All in all, we have shown that \((\X,\tau,\sfd)\) is an e.m.t.\ space. The statement is then achieved.
\end{proof}

The compactness assumption in Proposition \ref{prop:charact_cpt_emt} cannot be dropped, as this example shows:
\begin{example}\label{ex:lsc_but_not_emt}{\rm
Assume \((\X,\sfd)\) is a non-compact metric space, and \(\tilde\sfd\) is a distance on \(\X\) with \(\tilde\sfd\leq\sfd\) whose
induced topology \(\tilde\tau\) is strictly coarser than the topology \(\tau\) induced by \(\sfd\); for example, we can consider the space
\(\X=L^\infty(0,1)\) with the distance \(\sfd\) induced by \(\|\cdot\|_{L^\infty(0,1)}\) and with the distance \(\tilde\sfd\) induced
by \(\|\cdot\|_{L^1(0,1)}\). Then we claim that
\begin{align}
\label{eq:lsc_not_emt_1}
\tilde\sfd&\quad\text{ is }\tau\otimes\tau\text{-lower semicontinuous,}\\
\label{eq:lsc_not_emt_2}
(\X,\tau,\tilde\sfd)&\quad\text{ is not an e.m.t.\ space.}
\end{align}
To prove \eqref{eq:lsc_not_emt_1}: if \((x_n)_n,(y_n)_n\subseteq\X\) and \(x,y\in\X\) satisfy \(\lim_n\sfd(x_n,x)=\lim_n\sfd(y_n,y)=0\), then
\[
\tilde\sfd(x,y)\leq\varliminf_{n\to\infty}(\sfd(x,x_n)+\tilde\sfd(x_n,y_n)+\sfd(y_n,y))
=\varliminf_{n\to\infty}\tilde\sfd(x_n,y_n).
\]
To prove \eqref{eq:lsc_not_emt_2}: since \(\tilde\tau\subseteq\tau\), we have that \(\LIP_{b,1}(\X,\tau,\tilde\sfd)\) coincides with the set of
all bounded short maps \(f\colon(\X,\tilde\sfd)\to\R\). Hence, Remark \ref{rmk:initial_top_Lip_metric_space} ensures that the initial topology of
\(\LIP_{b,1}(\X,\tau,\tilde\sfd)\) coincides with \(\tilde\tau\), which is strictly coarser than \(\tau\), thus \((\X,\tau,\tilde\sfd)\)
is not an e.m.t.\ space.
\fr}\end{example}
\subsubsection*{Metrically-complete e.m.t.\ spaces}
We say that an e.m.t.\ space \((\X,\tau,\sfd)\) is \emph{metrically complete} if the extended metric space \((\X,\sfd)\) is complete,
while we say that it is a \emph{compact e.m.t.\ space} if \((\X,\tau)\) is compact. As we are going to see below, a sufficient
condition for metric completeness is the (topological) compactness.
\medskip

Recall that, for an e.m.t.\ space \((\X,\tau,\sfd)\), the family of pseudodistances \(\{\delta_f:f\in\LIP_{b,1}(\X,\tau,\sfd)\}\)
generates a uniform structure on \(\X\) that is compatible with the topology \(\tau\); see \cite[Section 2.3.4]{Pas:Tai:25}. 
Here, \(\delta_f\) is defined as \(\delta_f(x,y)\coloneqq|f(x)-f(y)|\) for every \(x,y\in\X\). We then obtain the following
comparison between metric completeness and completeness in the sense of uniform spaces:
\begin{proposition}\label{prop:cpt_emt_are_mc}
Let \((\X,\tau,\sfd)\) be an e.m.t.\ space. If \((\X,\tau,\sfd)\) is complete as a uniform space, then it is metrically complete.
In particular, every compact e.m.t.\ space is metrically complete.
\end{proposition}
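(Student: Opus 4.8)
The plan is to establish the first assertion by a direct net-chasing argument, and then obtain the second as a consequence of the fact that compact uniform spaces are complete. For the first implication, suppose $(\X,\tau,\sfd)$ is complete as a uniform space, and let $(x_i)_{i\in I}$ be a Cauchy net in the extended metric space $(\X,\sfd)$. First I would observe that a $\sfd$-Cauchy net is automatically Cauchy with respect to the uniformity generated by $\{\delta_f : f\in\LIP_{b,1}(\X,\tau,\sfd)\}$: indeed, for each $f\in\LIP_{b,1}(\X,\tau,\sfd)$ and all $x,y\in\X$ we have $\delta_f(x,y) = |f(x)-f(y)| \le \sfd(x,y)$ since $f$ is short, so the defining condition $\inf_{i_0}\sup\{\sfd(x_i,x_{i'}) : i_0\preceq i,i'\} = 0$ immediately gives the analogous statement for each $\delta_f$, which is exactly Cauchyness in the uniform space. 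By the assumed uniform completeness, the net $(x_i)$ converges in $(\X,\tau)$ to some $x\in\X$.

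It then remains to upgrade this $\tau$-convergence to $\sfd$-convergence, i.e.\ to show $\inf_{i_0}\sup\{\sfd(x_i,x) : i_0\preceq i\} = 0$. Here I would use that $(\X,\tau,\sfd)$ is an e.m.t.\ space, so $\sfd$ can be $\tau$-recovered: $\sfd(x_i,x) = \sup\{|f(x_i)-f(x)| : f\in\LIP_{b,1}(\X,\tau,\sfd)\}$. Fix $\varepsilon>0$. Since $(x_i)$ is $\sfd$-Cauchy, pick $i_0$ with $\sup\{\sfd(x_i,x_{i'}) : i_0\preceq i,i'\} \le \varepsilon$. Then for any fixed $f\in\LIP_{b,1}(\X,\tau,\sfd)$ and any $i\succeq i_0$, for all $i'\succeq i_0$ we have $|f(x_i)-f(x_{i'})| \le \sfd(x_i,x_{i'}) \le \varepsilon$; letting $i'$ run along the net and using continuity of $f$ together with $x_{i'}\to x$ in $\tau$, we get $|f(x_i)-f(x)| \le \varepsilon$. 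Since this holds for every such $f$, the $\tau$-recovery formula yields $\sfd(x_i,x) \le \varepsilon$ for all $i\succeq i_0$, hence $\inf_{i_0}\sup\{\sfd(x_i,x):i_0\preceq i\} = 0$. Thus the Cauchy net $\sfd$-converges, and its limit is unique because $\sfd$ is an extended distance; this proves metric completeness.

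For the final sentence, if $(\X,\tau,\sfd)$ is a compact e.m.t.\ space, then $(\X,\tau)$ is a compact Hausdorff space (recall that the $\tau\otimes\tau$-lower semicontinuity of $\sfd$ forces Hausdorffness, by Remark \ref{rmk:d_lsc_implies_Haus}), and a compact Hausdorff space carries a unique uniformity compatible with its topology, with respect to which it is complete; in particular the uniformity generated by $\{\delta_f : f\in\LIP_{b,1}(\X,\tau,\sfd)\}$ is complete. Applying the first part of the statement then gives that $(\X,\tau,\sfd)$ is metrically complete. The only mild subtlety worth spelling out is the passage from $|f(x_i)-f(x_{i'})|\le\varepsilon$ to $|f(x_i)-f(x)|\le\varepsilon$ in the second paragraph, which is where $\tau$-continuity of the members of $\LIP_{b,1}(\X,\tau,\sfd)$ and the already-established $\tau$-convergence of the net are jointly used; everything else is a routine unwinding of definitions.
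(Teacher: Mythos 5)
Your proof is correct and follows essentially the same route as the paper's: pass from $\sfd$-Cauchyness to Cauchyness in the uniformity via $\delta_f\le\sfd$, obtain a $\tau$-limit by uniform completeness, and upgrade to $\sfd$-convergence. The only cosmetic difference is that the paper invokes the $\tau$-lower semicontinuity of $\sfd(\cdot,x_i)$ at the last step, whereas you unwind that same fact explicitly through the $\tau$-recovery formula and the continuity of each $f$.
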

\begin{proof}
Suppose \((\X,\tau,\sfd)\) is complete as a uniform space and let \((x_i)_{i\in I}\) be a Cauchy net in \((\X,\sfd)\).
By the definition of \(\delta_f\), we have that \((x_i)_{i\in I}\) is Cauchy with respect to \(\delta_f\) for all
\(f\in\LIP_{b,1}(\X,\tau,\sfd)\), thus with respect to the uniform structure generated by
\(\{\delta_f:f\in \LIP_{b,1}(\X,\tau,\sfd)\}\). By completeness, there exists \(x\in\X\) so that \((x_i)_{i\in I}\)
converges to \(x\) in \((\X,\tau)\). Let us show that it also converges with respect to \(\sfd\). Fix any \(\varepsilon>0\).
Since \((x_i)_{i\in I}\) is Cauchy, there exists \(i_\varepsilon\in I\) so that \(\sfd(x_i,x_j)<\varepsilon\) for every
\(i,j\in I\) with \(i_\varepsilon\preceq i,j\). Then the \(\tau\)-lower semicontinuity of \(\sfd(\cdot,x_i)\) yields
\[
\sfd(x,x_i)\leq\varliminf_{j\in I}\sfd(x_j,x_i)\leq\varepsilon\quad\text{ for every }i\in I\text{ with }i_\varepsilon\preceq i.
\]
Therefore, \((x_i)_{i\in I}\) converges in \((\X,\sfd)\), thus \((\X,\tau,\sfd)\) is metrically complete. The final
claim follows from the fact that the unique compatible uniformity of a compact space is complete, see \cite{Kelley75}.
\end{proof}
\subsubsection*{Length and geodesic e.m.t.\ spaces}
We say that an e.m.t.\ space \((\X,\tau,\sfd)\) is a \emph{length e.m.t.\ space} (resp.\ a \emph{geodesic e.m.t.\ space})
provided \((\X,\sfd)\) is a length space (resp.\ a geodesic space).
\medskip

As it was proved in \cite[Theorem 2.3.2(c)]{Sav:22}, the following implication holds: letting \(\sfd_\ell\) be defined as in \eqref{eq:def_d_ell},
we have that
\begin{equation}\label{eq:cpt_gives_length_emt}
(\X,\tau,\sfd)\text{ is a compact e.m.t.\ space}\quad\Longrightarrow\quad(\X,\tau,\sfd_\ell)\text{ is a geodesic e.m.t.\ space.}
\end{equation}
The next example shows that the compactness assumption in \eqref{eq:cpt_gives_length_emt} cannot be dropped, meaning that for a non-compact e.m.t.\ space
\((\X,\tau,\sfd)\) it can happen that \((\X,\tau,\sfd_\ell)\) is not an e.m.t.\ space.
\begin{example}\label{ex:not_emt}{\rm
Consider the set \(\X\coloneqq\{(0,0)\}\cup\{(1,0\})\cup\bigcup_{n\in\N}([0,1]\times\{1/n\})\subseteq\R^2\), together with the Euclidean topology
\(\tau\) and distance \(\sfd\). Since no continuous curve in \((\X,\sfd)\) joins \(a\coloneqq(0,0)\) and \(b\coloneqq(1,0)\),
we have that \(\sfd_\ell(a,b)=+\infty\). For any \(n\in\N\), the points \(a_n\coloneqq(0,1/n)\) and \(b_n\coloneqq(1,1/n)\) satisfy \(\sfd_\ell(a_n,b_n)=1\).
As \(a_n\to a\) and \(b_n\to b\) with respect to \(\tau\), it follows that \(\sfd_\ell\) is not \(\tau\otimes\tau\)-lower
semicontinuous, thus \((\X,\tau,\sfd_\ell)\) is not an e.m.t.\ space (Remark \ref{rmk:d_lsc}). Note also that
\(\sfd_\ell(x,\cdot)\) is \(\tau\)-lower semicontinuous for every \(x\in\X\). This shows that the
\(\tau\otimes\tau\)-lower semicontinuity of an extended distance cannot be checked `separately' (i.e.\ on a single variable).
\fr}\end{example}
\section{The category \texorpdfstring{\({\bf ExtMetTop}\)}{ExtMetTop}}
\subsection{Catalogue of categories}
Let us collect below those categories that are relevant to us:
\begin{itemize}
\item We denote by \({\bf Tych}\) the full subcategory of \({\bf Top}\) whose objects are all Tychonoff spaces.
\item We define \({\bf PreExt\Psi MetTop}\) as the category whose objects are the pre-e.pm.t.\ spaces and whose morphisms are the continuous-short maps.
\item We define \({\bf ExtMetTop}\) as the full subcategory of \({\bf PreExt\Psi MetTop}\) whose objects are the e.m.t.\ spaces.
\item We define \({\bf ExtMetTop}_{\rm mc}\) as the full subcategory of \({\bf ExtMetTop}\) whose objects are
all metrically-complete e.m.t.\ spaces.
\item Given any real number \(\lambda\in(0,\infty)\), we denote by \({\bf MetTop}_{\leq\lambda}\) the full subcategory of \({\bf ExtMetTop}\)
whose objects are those bounded e.m.t.\ spaces having diameter that does not exceed \(\lambda\). Namely, the objects of \({\bf MetTop}_{\leq\lambda}\) are those extended metric-topological
spaces \((\X,\tau,\sfd)\) such that \(\sfd(x,y)\leq\lambda\) for every \(x,y\in\X\).
\item We define \({\bf CptExtMetTop}\) as the full subcategory of \({\bf ExtMetTop}\) whose objects are all compact e.m.t.\ spaces.
By virtue of Proposition \ref{prop:cpt_emt_are_mc}, we have that \({\bf CptExtMetTop}\) is also a full subcategory of \({\bf ExtMetTop}_{\rm mc}\).
\item \({\bf ExtMetTop}_{\rm len}\) is the category of length e.m.t.\ spaces, while \({\bf CptExtMetTop}_{\rm geo}\) is the category of
compact geodesic e.m.t.\ spaces.
\end{itemize}
\begin{remark}\label{rmk:equiv_PreExtPMetTop}{\rm
The category \({\bf PreExt\Psi MetTop}\) could be equivalently defined as
\[
{\bf PreExt\Psi MetTop}\coloneqq{\bf SP}(U_{\bf Top},U_{\bf Ext\Psi Met}),
\]
where \(U_{\bf Top}\colon{\bf Top}\to{\bf Set}\) and \(U_{\bf Ext\Psi Met}\colon{\bf Ext\Psi Met}\to{\bf Set}\)
denote the forgetful functors. We refer to Definition \ref{def:strict_pullback} for the definition of the strict
pullback \({\bf SP}(U_{\bf Top},U_{\bf Ext\Psi Met})\).
\fr}\end{remark}
\begin{remark}[Comparison with other notions of compact spaces]\label{rmk:comparison_MetCH}{\rm
Let \((\X,\tau,\sfd)\) be a pre-e.m.t.\ space such that \((\X,\tau)\) is compact. Then we claim that the following conditions are equivalent:
\begin{itemize}
\item[\(\rm i)\)] \((\X,\tau,\sfd)\) is a compact e.m.t.\ space.
\item[\(\rm ii)\)] \((\X,\tau,\sfd)\) is a \emph{symmetric separated metric compact Hausdorff space} in the
sense of \cite[Definition 2.1, Remark 2.8]{AbbadiniHofmann25}, i.e.\ \(\sfd\colon\X\times\X\to[0,+\infty]\)
is \(\tau\otimes\tau\)-lower semicontinuous.
\item[\(\rm iii)\)] \((\X,\tau,\sfd)\) is a \emph{compact topometric space} in the sense of \cite[Definition 4.11]{BU10},
i.e.\ the set \(\{x\in\X:\sfd(x,C)\leq r\}\) is \(\tau\)-closed whenever \(C\subseteq\X\) is \(\tau\)-closed and \(r\in(0,+\infty)\).
\end{itemize}
The equivalence between i) and ii) is the content of Proposition \ref{prop:charact_cpt_emt}, while the equivalence
between ii) and iii) follows from \cite[Lemma 1.9]{Ben08b}.
In particular, \({\bf CptExtMetTop}\) coincides with the category \({\bf MetCH_{sep,sym}}\) of symmetric separated metric
compact Hausdorff spaces and \emph{continuous non-expansive maps} that is considered e.g.\ in \cite[Remark 2.8]{AbbadiniHofmann25}.
\fr}\end{remark}

In the following statement, we collect several basic properties of the category \({\bf ExtMetTop}\):
\begin{proposition}\label{prop:basic_ExtMetTop}
The following properties hold:
\begin{itemize}
\item[\(\rm i)\)] In \({\bf ExtMetTop}\), the empty e.m.t.\ space is the initial object, while any one-point e.m.t.\ space is a terminal object.
In particular, \({\bf ExtMetTop}\) does not have a zero object.
\item[\(\rm ii)\)] A morphism \(\varphi\colon(\X,\tau_\X,\sfd_\X)\to(\Y,\tau_\Y,\sfd_\Y)\) in \({\bf ExtMetTop}\) is an isomorphism if and only if
it is a distance-preserving homeomorphism (i.e.\ a surjective embedding of e.m.t.\ spaces). 
\item[\(\rm iii)\)] The category \({\bf ExtMetTop}\) is locally small, but it is not essentially small.
\item[\(\rm iv)\)] A morphism \(\varphi\colon(\X,\tau_\X,\sfd_\X)\to(\Y,\tau_\Y,\sfd_\Y)\) in \({\bf ExtMetTop}\) is a monomorphism if and only if
it is injective.
\item[\(\rm v)\)] A morphism \(\varphi\colon(\X,\tau_\X,\sfd_\X)\to(\Y,\tau_\Y,\sfd_\Y)\) in \({\bf ExtMetTop}\) is an epimorphism if and only if
its image \(\varphi(\X)\) is dense in \((\Y,\tau_\Y)\).
\item[\(\rm vi)\)] The category \({\bf ExtMetTop}\) is not balanced.
\end{itemize}
\end{proposition}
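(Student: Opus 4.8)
The six items are largely independent and follow standard patterns, so the plan is to treat each in turn, reusing the explicit descriptions of limits/colimits only where genuinely needed (items iv and v). For item i), I would first observe that the empty e.m.t.\ space admits exactly one continuous-short map into any e.m.t.\ space (the empty map, which is vacuously continuous and short), and a one-point e.m.t.\ space receives exactly one map from any e.m.t.\ space (the constant map, which is continuous and trivially short since $\sfd$ vanishes on a singleton). Since the empty space is not a one-point space, the initial and terminal objects differ, so there is no zero object. For item ii), one direction is immediate: a distance-preserving homeomorphism has a two-sided inverse that is again continuous (homeomorphism) and short (the inverse of a distance-preserving bijection is distance preserving, hence short). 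Conversely, if $\varphi$ is an isomorphism with inverse $\psi$ in \({\bf ExtMetTop}\), then $\varphi$ is bijective with continuous inverse (hence a homeomorphism), and non-expansiveness of both $\varphi$ and $\psi=\varphi^{-1}$ forces $\sfd_\Y(\varphi(x),\varphi(y))=\sfd_\X(x,y)$, i.e.\ $\varphi$ is distance preserving.

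For item iii), local smallness is clear since $CS((\X,\tau_\X,\sfd_\X);(\Y,\tau_\Y,\sfd_\Y))$ is a subset of the set of maps $\X\to\Y$; to see \({\bf ExtMetTop}\) is not essentially small, I would exhibit a proper class of pairwise non-isomorphic e.m.t.\ spaces — for instance, for every cardinal $\kappa$ take a set of cardinality $\kappa$ with the discrete topology and the $\{0,\infty\}$-valued distance (distinct $\kappa$ give non-isomorphic objects by item ii), since isomorphisms are bijections). Item iv): a monomorphism in any concrete category with "enough points" is detected by the one-point object; precisely, if $\varphi$ is injective it is clearly monic (post-composition cancellation holds set-theoretically since $CS$-maps are in particular functions), and if $\varphi$ is monic, testing against the one-point e.m.t.\ space $\{*\}$ shows that two points $x_1\neq x_2$ with $\varphi(x_1)=\varphi(x_2)$ would give two distinct maps $\{*\}\to\X$ equalised by $\varphi$, contradicting monicity — here one only needs that constant maps from a singleton are always continuous-short, which holds.

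Item v) is the substantive one and I expect it to be the main obstacle. If $\varphi(\X)$ is dense in $(\Y,\tau_\Y)$, then any two continuous-short maps $g_1,g_2\colon(\Y,\tau_\Y,\sfd_\Y)\to(\rm Z,\tau_{\rm Z},\sfd_{\rm Z})$ agreeing after precomposition with $\varphi$ must agree on the dense set $\varphi(\X)$, hence on all of $\Y$ since $(\rm Z,\tau_{\rm Z})$ is Hausdorff (every e.m.t.\ space is Tychonoff, as recalled after Definition \ref{def:emt}) and two continuous maps into a Hausdorff space agreeing on a dense set coincide; thus $\varphi$ is epic. Conversely, suppose $\overline{\varphi(\X)}\neq\Y$ in $\tau_\Y$, and pick $y_0\in\Y\setminus\overline{\varphi(\X)}$. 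I would build two distinct continuous-short maps out of $\Y$ that agree on $\varphi(\X)$: the cleanest route is to take a suitable "test" e.m.t.\ space — e.g.\ $[0,\lambda]$ with the Euclidean structure for $\lambda=\operatorname{diam}$ or simply the two-point space with distance $\infty$ — together with a continuous-short function $f$ on $\Y$ vanishing on $\overline{\varphi(\X)}$ and positive at $y_0$ (such $f$ exists by complete regularity of $(\Y,\tau_\Y)$, after composing with a bounded Lipschitz cutoff to make it a $\LIP_{b,1}$ function, which is legitimate because $f$ composed with a $1$-Lipschitz bounded real function is still continuous-short provided we also control $\sfd_\Y$ — alternatively, invoke Theorem \ref{thm:ext_CS} or just use that $C_b(\Y,\tau_\Y)$ generates $\tau_\Y$); then the maps $g_1=f$ and $g_2=0$ into $\R$ (with its e.m.t.\ structure) are both continuous-short on $\Y$ — wait, $f$ need not be short for $\sfd_\Y$. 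To repair this, I would instead argue inside \({\bf Top}\) composed with the forgetful functor: a morphism whose image is dense is epic in \({\bf ExtMetTop}\) is the easy direction; for the converse, note that if $\varphi$ is epic in \({\bf ExtMetTop}\) but $\varphi(\X)$ is not dense, one constructs the two maps directly into the two-point e.m.t.\ space with $\sfd\equiv 0$ (which is an e.m.t.\ space with the indiscrete topology? no — the two-point space with $\sfd\equiv0$ has $\LIP_{b,1}$ = constants, so its initial topology is indiscrete, and it IS an e.pm.t. space but not metric). The correct target is therefore the two-point e.m.t.\ space $\{0,1\}$ with discrete topology and $\sfd(0,1)=\infty$: any continuous map $\Y\to\{0,1\}$ is automatically short for this $\sfd$ (the bound $\sfd_{\rm Z}\leq\sfd_\Y$ can fail, but one uses instead the pushforward/quotient structure). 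The robust fix is: by complete regularity choose a continuous $h\colon(\Y,\tau_\Y)\to[0,1]$ with $h\equiv0$ on $\overline{\varphi(\X)}$ and $h(y_0)=1$, equip $[0,1]$ with the \emph{indiscrete-compatible} e.m.t.\ structure obtained as the e.m.t.-fication... this is getting delicate, so concretely the plan is to take the target to be the e.m.t.\ space $([0,1],\tau_{\rm Eucl},\sfd_\infty)$ where $\sfd_\infty(s,t)=\infty$ for $s\neq t$ — then EVERY map into it that is continuous is vacuously... no, not short. The genuinely clean choice: target $= (\{p,q\},\tau_{\rm indiscrete},\sfd\equiv0)$ is a one-object-essentially e.pm.t.\ but not e.m.t.\ space, so not allowed in \({\bf ExtMetTop}\). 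Hence the honest argument must produce a continuous-short map, and I would obtain it by taking any $f\in\LIP_{b,1}(\Y,\tau_\Y,\sfd_\Y)$ with $f\equiv 0$ on $\varphi(\X)$ and $f(y_0)>0$, whose existence follows because $\tau_\Y$ is the initial topology of $\LIP_{b,1}(\Y,\tau_\Y,\sfd_\Y)$ so this family already separates $y_0$ from the closed set $\overline{\varphi(\X)}$ (basic open neighbourhood of $y_0$ missing $\overline{\varphi(\X)}$ is a finite intersection of preimages of Euclidean opens under such $f$'s, and combining them with a short bounded modification gives a single such $f$); then $g_1=f$, $g_2=0\colon(\Y,\tau_\Y,\sfd_\Y)\to(\R,\tau_{\rm Eucl},|\cdot|)$ are distinct continuous-short maps with $g_1\circ\varphi=g_2\circ\varphi$, contradicting that $\varphi$ is epic. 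Finally item vi) is immediate once ii), iv), v) are in hand: any continuous-short bijection with dense image but non-continuous inverse, or more simply the identity-on-underlying-set map $(\X,\tau,\sfd)\to(\X,\tau',\sfd)$ from a finer to the e.m.t.-fied coarser structure, is both monic and epic but not an isomorphism; concretely take $\X=\{0,1\}$, $\sfd\equiv0$, $\tau$ discrete vs.\ $\tau'$ indiscrete — but again $\sfd\equiv0$ with discrete $\tau$ is not e.m.t.\ (not metric); so instead use $\X$ a countable set, $\sfd$ the discrete $\{0,\infty\}$ metric, and compare it with... both must be genuine e.m.t.\ spaces, so the simplest witness is the inclusion $(\mathbb{Q},\tau_{\rm Eucl},\sfd_{\rm Eucl})\hookrightarrow(\R,\tau_{\rm Eucl},\sfd_{\rm Eucl})$, which is monic (injective, iv), epic (dense image, v), but not an isomorphism (not surjective, ii)). Therefore \({\bf ExtMetTop}\) is not balanced.
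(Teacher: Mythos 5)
Your proposal is correct and, for items i)--iv) and vi), follows essentially the same route as the paper: the empty/constant maps for i), the two-sided shortness argument for ii), a proper class of pairwise non-isomorphic discrete examples for iii) (the paper uses \(\{0,1\}^\kappa\) with the product topology and the \(1\)-discrete distance, you use a \(\kappa\)-sized discrete space with the \(\infty\)-discrete distance; both are legitimately e.m.t.\ spaces), testing against a one-point space for iv), and \(\mathbb Q\hookrightarrow\R\) for vi).

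The one place where you genuinely diverge is the non-trivial direction of v). The paper obtains the function \(f\in\LIP_{b,1}(\Y,\tau_\Y,\sfd_\Y)\) with \(f|_{\overline{\varphi(\X)}}=0\) and \(f(y_0)>0\) by invoking Theorem \ref{thm:alt_char_emt} iv), a separation property proved in the appendix via the compactification and the extension theorem. You instead construct \(f\) directly from the fact that \(\tau_\Y\) is the initial topology of \(\LIP_{b,1}(\Y,\tau_\Y,\sfd_\Y)\). This is more elementary and works, but the step you leave as ``combining them with a short bounded modification gives a single such \(f\)'' should be spelled out: take a basic neighbourhood \(\bigcap_{i=1}^n f_i^{-1}\big((f_i(y_0)-\eps_i,f_i(y_0)+\eps_i)\big)\) of \(y_0\) disjoint from \(\overline{\varphi(\X)}\) and set \(f\coloneqq\min_{1\leq i\leq n}\big(\eps_i-|f_i-f_i(y_0)|\big)^+\); since \(\LIP_{b,1}\) is stable under translation, absolute value, positive part and finite minima (all of these preserve both continuity and \(1\)-Lipschitzness), \(f\in\LIP_{b,1}(\Y,\tau_\Y,\sfd_\Y)\), it vanishes off the basic neighbourhood (hence on \(\overline{\varphi(\X)}\)) and \(f(y_0)=\min_i\eps_i>0\). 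With that filled in, your target \(\R\) and the pair \(g_1=f\), \(g_2=0\) give exactly the contradiction the paper gets. The many abandoned attempts preceding this (two-point spaces with \(\sfd\equiv 0\) or \(\sfd\equiv\infty\)) should simply be deleted; as you correctly diagnose, they are not objects of \({\bf ExtMetTop}\) or do not yield short maps.
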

\begin{proof}
\ \\
\({\bf i)}\) For any \((\X,\tau,\sfd)\in{\rm Obj}({\bf ExtMetTop})\), we trivially have that the empty function from the empty e.m.t.\ space to
\((\X,\tau,\sfd)\) is continuous-short. Moreover, given a one-point e.m.t.\ space \((\Y,\tau_\Y,\sfd_\Y)\),  the constant
map from \(\X\) to \(\Y\) (when \(\X\neq\varnothing\), or the empty function from \(\X\) to \(\Y\) when \(\X=\varnothing\)) is the (unique)
continuous-short map. Thus, the empty e.m.t.\ space is the initial object of \({\bf ExtMetTop}\), while each one-point e.m.t.\ space
is a terminal object of \({\bf ExtMetTop}\).\\
\({\bf ii)}\) If \(\varphi\colon(\X,\tau_\X,\sfd_\X)\to(\Y,\tau_\Y,\sfd_\Y)\) is an isomorphism in \({\bf ExtMetTop}\) with inverse
\(\varphi^{-1}\), then we have in particular that \(\varphi\colon\X\to\Y\) is invertible (i.e.\ an
isomorphism in the category \({\bf Set}\) of sets) and \(\varphi^{-1}\colon\Y\to\X\) is its inverse in the set-theoretic sense. Since both \(\varphi\colon(\X,\tau_\X)\to(\Y,\tau_\Y)\)
and \(\varphi^{-1}\colon(\Y,\tau_\Y)\to(\X,\tau_\X)\) are continuous, we have that \(\varphi\colon(\X,\tau_\X)\to(\Y,\tau_\Y)\) is a homeomorphism.
Since \(\varphi\colon(\X,\sfd_\X)\to(\Y,\sfd_\Y)\) and \(\varphi^{-1}\colon(\Y,\sfd_\Y)\to(\X,\sfd_\X)\) are short maps, for any \(x,y\in\X\) we have
\[
\sfd_\Y(\varphi(x),\varphi(y))\leq\sfd_\X(x,y)=\sfd_\X(\varphi^{-1}(\varphi(x)),\varphi^{-1}(\varphi(y)))\leq\sfd_\Y(\varphi(x),\varphi(y)),
\]
which yields \(\sfd_\Y(\varphi(x),\varphi(y))=\sfd_\X(x,y)\). Hence, \(\varphi\colon(\X,\sfd_\X)\to(\Y,\sfd_\Y)\) is distance preserving.
Conversely, each distance-preserving homeomorphism is clearly an isomorphism in \({\bf ExtMetTop}\).\\
\({\bf iii)}\) Let \((\X,\tau_\X,\sfd_\X),(\Y,\tau_\Y,\sfd_\Y)\in{\rm Obj}({\bf ExtMetTop})\). Each morphism \(\varphi\colon(\X,\tau_\X,\sfd_\X)\to(\Y,\tau_\Y,\sfd_\Y)\)
in \({\bf ExtMetTop}\) is in particular a map between the sets \(\X\), \(\Y\). This guarantees that \({\bf ExtMetTop}\) is a locally-small category.
Next, fix a cardinal \(\kappa\) and consider the e.m.t.\ space \((\X_\kappa,\tau_\kappa,\sfd_\kappa)\),
where \(\X_\kappa\coloneqq\{0,1\}^\kappa\) is equipped with the product topology \(\tau_\kappa\) (each factor \(\{0,1\}\) being equipped with the discrete topology)
and with the discrete distance \(\sfd_\kappa\) (defined as \(\sfd_\kappa(a,b)\coloneqq 1\) whenever \(a,b\in\X_\kappa\) are distinct). It is easy to check that
\((\X_\kappa,\tau_\kappa,\sfd_\kappa)\) is an e.m.t.\ space (cf.\ Proposition \ref{prop:charact_cpt_emt}, or Section \ref{s:Tych_ExtMet} below).
Given that for two different cardinals \(\kappa_1\) and \(\kappa_2\) we have that \(\X_{\kappa_1}\) and \(\X_{\kappa_2}\) have different cardinalities, and taking ii) into account,
we know a fortiori that no isomorphism \((\X_{\kappa_1},\tau_{\kappa_1},\sfd_{\kappa_1})\to(\X_{\kappa_2},\tau_{\kappa_2},\sfd_{\kappa_2})\) in \({\bf ExtMetTop}\) exists.
Since the collection of all cardinals is a proper class, we infer that the skeleton of \({\bf ExtMetTop}\) is not a small category, or in other words that \({\bf ExtMetTop}\)
is not essentially small.\\
\({\bf iv)}\) Assume \(\varphi\colon(\X,\tau_\X,\sfd_\X)\to(\Y,\tau_\Y,\sfd_\Y)\) is a monomorphism in \({\bf ExtMetTop}\). We aim to show that \(\varphi\colon\X\to\Y\) is injective.
Let us argue by contradiction: suppose \(\varphi(x)=\varphi(y)\) for some \(x,y\in\X\) with \(x\neq y\). Fix a one-point e.m.t.\ space \((\{o\},\tau_o,\sfd_o)\) and define the
maps \(\psi_1,\psi_2\colon\{o\}\to\X\) as \(\psi_1(o)\coloneqq x\) and \(\psi_2(o)\coloneqq y\). Trivially, \(\psi_1\), \(\psi_2\) are different continuous-short maps and
\(\varphi\circ\psi_1=\varphi\circ\psi_2\), in contradiction with the assumption that \(\varphi\) is a monomorphism. Therefore, \(\varphi\) is injective.
Conversely, each injective continuous-short map is clearly a monomorphism in \({\bf ExtMetTop}\).\\
\({\bf v)}\) Assume \(\varphi\colon(\X,\tau_\X,\sfd_\X)\to(\Y,\tau_\Y,\sfd_\Y)\) is an epimorphism in \({\bf ExtMetTop}\). We aim to show that the map \(\varphi\colon\X\to(\Y,\tau_\Y)\)
has dense image. Let us argue by contradiction: suppose the closure \(C\) of \(\varphi(\X)\) in \((\Y,\tau_\Y)\) satisfies \(C\neq\Y\). Fix some \(y\in\Y\setminus C\). Applying
Theorem \ref{thm:alt_char_emt} iv), we can thus find a function \(\psi_1\in\LIP_{b,1}(\Y,\tau_\Y,\sfd_\Y)\) such that \(\psi_1|_C=0\) and \(\psi_1(y)\neq 0\).
In particular, \(\psi_1\) is a continuous-short map from \((\Y,\tau_\Y,\sfd_\Y)\) to the e.m.t.\ space \(\R\) (equipped with the Euclidean topology and distance).
Letting \(\psi_2\colon\Y\to\R\) be the null function \(\psi_2\coloneqq 0\), we then have that \(\psi_1,\psi_2\colon(\Y,\tau_\Y,\sfd_\Y)\to\R\) are different morphisms in 
\({\bf ExtMetTop}\) satisfying \(\psi_1\circ\varphi=0=\psi_2\circ\varphi\) (as \(\varphi(\X)\subseteq C\)), in contradiction with the assumption that \(\varphi\) is an epimorphism.
Therefore, \(\varphi(\X)\) is dense in \((\Y,\tau_\Y)\). Conversely, assume \(\varphi\colon\X\to(\Y,\tau_\Y)\) has dense image. Given an e.m.t.\ space \(({\rm Z},\tau_{\rm Z},\sfd_{\rm Z})\)
and continuous-short maps \(\psi_1,\psi_2\colon(\Y,\tau_\Y,\sfd_\Y)\to({\rm Z},\tau_{\rm Z},\sfd_{\rm Z})\) satisfying \(\psi_1\circ\varphi=\psi_2\circ\varphi\), we have that
\(\psi_1|_{\varphi(\X)}=\psi_2|_{\varphi(\X)}\). Since \(\psi_1,\psi_2\colon(\Y,\tau_\Y)\to({\rm Z},\tau_{\rm Z})\) are continuous, \(\varphi(\X)\) is dense in \((\Y,\tau_\Y)\)
and \(({\rm Z},\tau_{\rm Z})\) is Hausdorff, we deduce that \(\psi_1=\psi_2\), showing that \(\varphi\) is an epimorphism.\\
\({\bf vi)}\) We consider \(\mathbb Q\) and \(\R\) (together with the Euclidean topology and distance) as e.m.t.\ spaces. The inclusion \(\mathbb Q\hookrightarrow\R\) is a bimorphism
(as it is injective and with dense image), but it is not an isomorphism (as it is not surjective). This shows that \({\bf ExtMetTop}\) is not a balanced category.
\end{proof}

By arguing as in the proof of Proposition \ref{prop:basic_ExtMetTop} ii), it is possible to show that even in the category \({\bf PreExt\Psi MetTop}\)
the isomorphisms are exactly the distance-preserving homeomorphisms.
\begin{corollary}
The following properties hold:
\begin{itemize}
\item[\(\rm i)\)] In \({\bf CptExtMetTop}\), the empty e.m.t.\ space is the initial object, while any one-point e.m.t.\ space is a terminal object.
In particular, \({\bf CptExtMetTop}\) does not have a zero object.
\item[\(\rm ii)\)] A morphism \(\varphi\colon(\X,\tau_\X,\sfd_\X)\to(\Y,\tau_\Y,\sfd_\Y)\) in \({\bf CptExtMetTop}\) is an isomorphism if and only if
it is a distance-preserving homeomorphism. 
\item[\(\rm iii)\)] The category \({\bf CptExtMetTop}\) is locally small, but it is not essentially small.
\item[\(\rm iv)\)] A morphism \(\varphi\colon(\X,\tau_\X,\sfd_\X)\to(\Y,\tau_\Y,\sfd_\Y)\) in \({\bf CptExtMetTop}\) is a monomorphism if and only if
it is injective.
\item[\(\rm v)\)] A morphism \(\varphi\colon(\X,\tau_\X,\sfd_\X)\to(\Y,\tau_\Y,\sfd_\Y)\) in \({\bf CptExtMetTop}\) is an epimorphism if and only if
it is surjective.
\item[\(\rm vi)\)] The category \({\bf CptExtMetTop}\) is balanced.
\end{itemize}
\end{corollary}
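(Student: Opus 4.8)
The plan is to derive items (i)--(iv) from the corresponding assertions of Proposition \ref{prop:basic_ExtMetTop}, using that \({\bf CptExtMetTop}\) is a \emph{full} subcategory of \({\bf ExtMetTop}\) and that every test object occurring in that proof is already compact: the empty e.m.t.\ space and the one-point e.m.t.\ spaces are compact, and the spaces \((\{0,1\}^\kappa,\tau_\kappa,\sfd_\kappa)\) are compact Hausdorff by Tychonoff's theorem, hence compact e.m.t.\ spaces by Proposition \ref{prop:charact_cpt_emt}. Thus \(\varnothing\) remains initial and the singletons remain terminal, so there is no zero object (an initial object is isomorphic to \(\varnothing\), a terminal one to a singleton, and these are non-isomorphic by (ii)): this gives (i). For (ii), an isomorphism of \({\bf CptExtMetTop}\) is in particular an isomorphism of \({\bf ExtMetTop}\), hence a distance-preserving homeomorphism by Proposition \ref{prop:basic_ExtMetTop}(ii); conversely, a distance-preserving homeomorphism between compact e.m.t.\ spaces has a continuous, distance-preserving inverse, which is again a morphism of \({\bf CptExtMetTop}\). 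Item (iii) is then immediate: morphisms are set maps (local smallness), while the \((\{0,1\}^\kappa,\tau_\kappa,\sfd_\kappa)\), for \(\kappa\) ranging over all cardinals, realise a proper class of distinct cardinalities, whence a proper class of isomorphism classes by (ii) (essential non-smallness). Finally (iv) follows as in Proposition \ref{prop:basic_ExtMetTop}(iv), the one-point e.m.t.\ space used there being compact.

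Item (v) is the first place where \({\bf CptExtMetTop}\) genuinely differs from \({\bf ExtMetTop}\). The implication ``surjective \(\Rightarrow\) epimorphism'' holds in any concrete category. For the converse I would adapt the argument of Proposition \ref{prop:basic_ExtMetTop}(v): if \(\varphi\colon(\X,\tau_\X,\sfd_\X)\to(\Y,\tau_\Y,\sfd_\Y)\) is an epimorphism of \({\bf CptExtMetTop}\), then \(C\coloneqq\varphi(\X)\) is \(\tau_\Y\)-closed, being the continuous image of a compact space inside a Hausdorff one; were \(C\neq\Y\), I would pick \(y\in\Y\setminus C\), invoke Theorem \ref{thm:alt_char_emt}(iv) to get \(\psi_1\in\LIP_{b,1}(\Y,\tau_\Y,\sfd_\Y)\) with \(\psi_1|_C=0\) and \(\psi_1(y)\neq 0\), and then observe that \(\psi_1\) and the null map, regarded as morphisms into the \emph{compact} e.m.t.\ space \([-M,M]\) (Euclidean topology and distance, with \(M\coloneqq\|\psi_1\|_{C_b(\Y,\tau_\Y)}\)), are distinct yet equalised by \(\varphi\) -- contradicting that \(\varphi\) is an epimorphism. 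Hence \(\varphi\) is surjective. Compared with \({\bf ExtMetTop}\), the only modifications are that the real line is replaced by a compact interval as target, and that ``dense image'' is upgraded to ``surjective'' thanks to compactness of the domain.

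Item (vi) -- to be contrasted with the failure of \({\bf ExtMetTop}\) to be balanced, cf.\ Proposition \ref{prop:basic_ExtMetTop}(vi) -- would then follow by combining (iv) and (v): a morphism of \({\bf CptExtMetTop}\) that is at once a monomorphism and an epimorphism is bijective, hence, being a continuous bijection from a compact space onto a Hausdorff one, a homeomorphism; by (ii) it remains only to show that such a \(\varphi\) is distance preserving, equivalently that \(\varphi^{-1}\) is short. This is the step I expect to be the crux. The approach I would attempt is: given points \(a,b\) of the source, apply the extension theorem (Theorem \ref{thm:ext_CS}) to the compact set \(\{a,b\}\) to build functions of \(\LIP_{b,1}\) of the source that witness, up to truncation, the value \(\sfd_\X(a,b)\); transport these along \(\varphi^{-1}\) to the target; and use the \(\tau\)-recovery of \(\sfd_\Y\) to conclude \(\sfd_\X(a,b)\leq\sfd_\Y(\varphi(a),\varphi(b))\), the reverse inequality being exactly the shortness of \(\varphi\). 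The delicate point -- and the main obstacle I anticipate -- is to verify that the transported functions remain non-expansive for \(\sfd_\Y\); since the distance of an e.m.t.\ space is not determined by its topology, this step has to exploit the compactness of \emph{both} spaces in an essential way, and it is the part of the proof of (vi) I would scrutinise most carefully.
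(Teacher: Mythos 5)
Your treatment of items (i)--(iv) coincides with the paper's: the paper likewise derives them from Proposition \ref{prop:basic_ExtMetTop} after observing that all the test objects involved (the empty space, the singletons, the cubes \((\{0,1\}^\kappa,\tau_\kappa,\sfd_\kappa)\)) are compact. For item (v) your route is again the paper's --- first ``epimorphism \(\Leftrightarrow\) dense image'', then ``dense image \(+\) compact domain \(\Rightarrow\) surjective'' --- except that you are more careful on a point the paper glosses over: the ``epi \(\Rightarrow\) dense image'' direction of Proposition \ref{prop:basic_ExtMetTop} v) uses \(\R\) as a test object, which is not an object of \({\bf CptExtMetTop}\), so the argument must be rerun with a compact target such as your \([-M,M]\). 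That modification is exactly right.

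Item (vi) is where things go wrong --- but not on your side. You correctly reduce a bimorphism to a continuous-short bijection, hence (by compactness and Hausdorffness) to a homeomorphism, and you correctly isolate the remaining issue: one must show that \(\varphi\) is distance preserving, i.e.\ that \(\varphi^{-1}\) is short. You flag this as the step you cannot verify; in fact it cannot be verified, because it is false. Take \(\X=\Y=\{a,b\}\) with the discrete topology, \(\sfd_\X(a,b)=2\) and \(\sfd_\Y(a,b)=1\); both are compact e.m.t.\ spaces (e.g.\ by Proposition \ref{prop:charact_cpt_emt}, or because each topology is the one induced by the corresponding distance), and the identity map \(\X\to\Y\) is a continuous-short bijection, hence a bimorphism by (iv) and (v), yet it is not distance preserving and so not an isomorphism by (ii). This is the classical reason why compact metric spaces with short maps do not form a balanced category, and it transfers verbatim to \({\bf CptExtMetTop}\). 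The paper's own proof of (vi) is the single sentence ``It easily follows from ii), iv) and v)'', which silently assumes that a continuous-short bijection between compact e.m.t.\ spaces is distance preserving; your proposed repair via Theorem \ref{thm:ext_CS} and \(\tau\)-recovery breaks down precisely where you predicted, since a function that is \(1\)-Lipschitz for \(\sfd_\X\) need not remain \(1\)-Lipschitz for \(\sfd_\Y\) after transport along \(\varphi^{-1}\) (in the example above, take \(f(a)=0\), \(f(b)=2\)). So your instinct to scrutinise that step was sound: the gap in your proof of (vi) is not fillable, and the statement of (vi) itself appears to be erroneous.
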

\begin{proof}
\ \\
\({\bf i)}\) Since the empty e.m.t.\ space and all one-point e.m.t.\ spaces are compact, it follows from Proposition \ref{prop:basic_ExtMetTop} i).\\
\({\bf ii)}\) It follows from Proposition \ref{prop:basic_ExtMetTop} ii).\\
\({\bf iii)}\) Since \({\bf CptExtMetTop}\) is a subcategory of \({\bf ExtMetTop}\), which is locally small by Proposition \ref{prop:basic_ExtMetTop} iii),
it is clear that \({\bf CptExtMetTop}\) is locally small. On the other hand, the topological spaces \((\X_\kappa,\tau_\kappa)\) that are constructed in the
proof of Proposition \ref{prop:basic_ExtMetTop} iii) are compact by Tychonoff's theorem, whence it follows that \({\bf CptExtMetTop}\) is not essentially small.\\
\({\bf iv)}\) It follows from the proof of Proposition \ref{prop:basic_ExtMetTop} iv), since one-point e.m.t.\ spaces are compact.\\
\({\bf v)}\) It follows from Proposition \ref{prop:basic_ExtMetTop} v) that the epimorphisms in \({\bf CptExtMetTop}\) are exactly the continuous-short maps
having dense image. However, if \(\varphi\colon(\X,\tau_\X,\sfd_\X)\to(\Y,\tau_\Y,\sfd_\Y)\) is a continuous-short map between compact e.m.t.\ spaces such
that \(\varphi(\X)\) is dense in \((\Y,\tau_\Y)\), then we have in fact that \(\varphi(\X)=\Y\). Indeed, \(\varphi(\X)\) is compact (being the continuous image
of a compact space), thus accordingly it is closed in \((\Y,\tau_\Y)\) (since \(\tau_\Y\) is a Hausdorff topology).\\
\({\bf vi)}\) It easily follows from ii), iv) and v).
\end{proof}
\subsection{The e.m.t.-fication functor}\label{s:emt-fication}
In this section, we obtain a result that has a fundamental r\^{o}le in this paper: we provide a canonical way to associate to any given
pre-e.pm.t.\ space \((\X,\tau,\sfd)\) an e.m.t.\ space \((\check\X,\check\tau,\check\sfd)={\sf emt}((\X,\tau,\sfd))\), which we refer to
as the \emph{e.m.t.-fication} of \((\X,\tau,\sfd)\). 
\begin{proposition}\label{prop:emt-fication_constr}
Let \((\X,\tau,\sfd)\) be a pre-e.pm.t.\ space. Then there exist an e.m.t.\ space \((\check\X,\check\tau,\check\sfd)\)
and a continuous-short map \({\sf c}_{(\X,\tau,\sfd)}\colon(\X,\tau,\sfd)\to(\check\X,\check\tau,\check\sfd)\)
having the following universal property: given an e.m.t.\ space \((\Y,\tau_\Y,\sfd_\Y)\) and a continuous-short
map \(\varphi\colon(\X,\tau,\sfd)\to(\Y,\tau_\Y,\sfd_\Y)\), there exists a unique continuous-short map
\(\tilde\varphi\colon(\check\X,\check\tau,\check\sfd)\to(\Y,\tau_\Y,\sfd_\Y)\) such that
\[\begin{tikzcd}
(\X,\tau,\sfd) \arrow[r,"{\sf c}_{(\X,\tau,\sfd)}"] \arrow[dr,swap,"\varphi"] & (\check\X,\check\tau,\check\sfd) \arrow[d,"\tilde\varphi"] \\
& (\Y,\tau_\Y,\sfd_\Y)
\end{tikzcd}\]
is a commutative diagram. Moreover, we have that the map
\begin{equation}\label{eq:bij_CS_emt}
CS((\X,\tau,\sfd);(\Y,\tau_\Y,\sfd_\Y))\ni\varphi\mapsto\tilde\varphi\in CS((\check\X,\check\tau,\check\sfd);(\Y,\tau_\Y,\sfd_\Y))
\end{equation}
is a bijection.
\end{proposition}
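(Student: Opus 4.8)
Write $\mathcal L\coloneqq\LIP_{b,1}(\X,\tau,\sfd)$. The plan is to obtain $(\check\X,\check\tau,\check\sfd)$ as a quotient of $(\X,\tau,\sfd)$ dictated by $\mathcal L$. First I would introduce on $\X$ the equivalence relation defined by $x\sim y$ if and only if $f(x)=f(y)$ for every $f\in\mathcal L$, set $\check\X\coloneqq\X/{\sim}$ and let ${\sf c}\coloneqq{\sf c}_{(\X,\tau,\sfd)}\colon\X\to\check\X$ be the projection. Each $f\in\mathcal L$ is constant along $\sim$-classes, hence descends to a function $\check f\colon\check\X\to\R$ with $\check f\circ{\sf c}=f$. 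I would then equip $\check\X$ with the topology $\check\tau$ defined as the initial topology of the family $\{\check f:f\in\mathcal L\}$, and with the function
\[
\check\sfd([x],[y])\coloneqq\sup\big\{|f(x)-f(y)|\;\big|\;f\in\mathcal L\big\},
\]
which is well posed for the same reason. One checks at once that $\check\sfd$ is an extended pseudodistance, and in fact an extended distance, since $\check\sfd([x],[y])=0$ forces $f(x)=f(y)$ for every $f\in\mathcal L$, i.e.\ $[x]=[y]$. Moreover ${\sf c}$ is continuous (because $\check f\circ{\sf c}=f$ for all $f\in\mathcal L$ and $\check\tau$ is initial) and short (because $\check\sfd({\sf c}(x),{\sf c}(y))=\sup_{f\in\mathcal L}|f(x)-f(y)|\le\sfd(x,y)$, each $f$ being short), so ${\sf c}$ is a continuous-short map.

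The heart of the matter — and the step I expect to be the only genuinely delicate one — is checking that $(\check\X,\check\tau,\check\sfd)$ is an e.m.t.\ space, which must be done without circularity even though $\LIP_{b,1}(\check\X,\check\tau,\check\sfd)$ refers back to $\check\tau$ and $\check\sfd$. The point is the inclusion $\{\check f:f\in\mathcal L\}\subseteq\LIP_{b,1}(\check\X,\check\tau,\check\sfd)$: indeed each $\check f$ is bounded, $\check\tau$-continuous (being one of the generators of $\check\tau$) and short with respect to $\check\sfd$, since $|\check f([x])-\check f([y])|=|f(x)-f(y)|\le\check\sfd([x],[y])$. On the other hand every element of $\LIP_{b,1}(\check\X,\check\tau,\check\sfd)$ is $\check\tau$-continuous, so the initial topology it generates is coarser than $\check\tau$; being also finer than the initial topology of the subfamily $\{\check f:f\in\mathcal L\}$, which is $\check\tau$ by definition, it must equal $\check\tau$. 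This is axiom~i) of Definition~\ref{def:emt}. For axiom~ii), the inclusion just noted together with the shortness of every $g\in\LIP_{b,1}(\check\X,\check\tau,\check\sfd)$ gives
\[
\check\sfd([x],[y])=\sup_{f\in\mathcal L}|\check f([x])-\check f([y])|\le\sup_{g\in\LIP_{b,1}(\check\X,\check\tau,\check\sfd)}|g([x])-g([y])|\le\check\sfd([x],[y]),
\]
so equality holds throughout and $\check\sfd$ can be $\check\tau$-recovered. Hence $(\check\X,\check\tau,\check\sfd)$ is an e.m.t.\ space.

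Finally I would verify the universal property and deduce the bijection. Fix an e.m.t.\ space $(\Y,\tau_\Y,\sfd_\Y)$ and $\varphi\in CS((\X,\tau,\sfd);(\Y,\tau_\Y,\sfd_\Y))$. Since $g\circ\varphi$ is a bounded composition of continuous-short maps, $g\circ\varphi\in\mathcal L$ for every $g\in\LIP_{b,1}(\Y,\tau_\Y,\sfd_\Y)$; hence ${\sf c}(x)={\sf c}(y)$ implies $g(\varphi(x))=g(\varphi(y))$ for all such $g$, so $\sfd_\Y(\varphi(x),\varphi(y))=0$ (as $\sfd_\Y$ can be $\tau_\Y$-recovered) and therefore $\varphi(x)=\varphi(y)$, because $(\Y,\sfd_\Y)$ is an extended metric space. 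Thus $\varphi$ factors as $\varphi=\tilde\varphi\circ{\sf c}$ for a unique map $\tilde\varphi\colon\check\X\to\Y$, uniqueness following from the surjectivity of ${\sf c}$. It remains to see that $\tilde\varphi$ is continuous-short. It is short, as $\sfd_\Y(\tilde\varphi([x]),\tilde\varphi([y]))=\sup_g|g(\varphi(x))-g(\varphi(y))|\le\sup_{h\in\mathcal L}|h(x)-h(y)|=\check\sfd([x],[y])$, using $g\circ\varphi\in\mathcal L$. It is continuous: since $\tau_\Y$ is the initial topology of $\LIP_{b,1}(\Y,\tau_\Y,\sfd_\Y)$, by Remark~\ref{rmk:charact_initial_top} it suffices to note that, putting $h\coloneqq g\circ\varphi\in\mathcal L$, both $g\circ\tilde\varphi$ and $\check h$ precompose with ${\sf c}$ to $h$, whence $g\circ\tilde\varphi=\check h$ by surjectivity of ${\sf c}$, and $\check h$ is a generator of $\check\tau$, hence $\check\tau$-continuous. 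The uniqueness of $\tilde\varphi$ shows that \eqref{eq:bij_CS_emt} is injective; it is surjective, with inverse $\psi\mapsto\psi\circ{\sf c}$ (which is continuous-short since ${\sf c}$ is), because any $\psi\in CS((\check\X,\check\tau,\check\sfd);(\Y,\tau_\Y,\sfd_\Y))$ is the unique factorisation of $\psi\circ{\sf c}$ through ${\sf c}$, i.e.\ $\psi=\widetilde{\psi\circ{\sf c}}$.
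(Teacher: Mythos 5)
Your proposal is correct and follows essentially the same route as the paper: quotient $\X$ by the relation identifying points not separated by $\LIP_{b,1}(\X,\tau,\sfd)$ (equivalently, by $\tilde\sfd(x,y)=0$ for the recovered pseudodistance), push the distance down, take the initial topology generated by the descended functions, and verify the e.m.t.\ axioms and the universal property via the sandwich inequalities. The only cosmetic difference is that you define $\check\tau$ directly as the initial topology of $\{\check f:f\in\mathcal L\}$ rather than of $\LIP_{b,1}(\check\X,\tilde\tau,\check\sfd)$ with $\tilde\tau$ the quotient topology, but these families generate the same topology and your verification of axiom i) covers this.
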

\begin{proof}
We define the extended pseudodistance \(\tilde\sfd\) on \(\X\) as
\[
\tilde\sfd(x,y)\coloneqq\sup\big\{|f(x)-f(y)|\;\big|\;f\in\LIP_{b,1}(\X,\tau,\sfd)\big\}\quad\text{ for every }x,y\in\X.
\]
We endow the quotient space \(\check\X\coloneqq\X/\tilde\sfd\) with the extended distance \(\check\sfd\coloneqq[\tilde\sfd]\) induced by \(\tilde\sfd\)
and with the initial topology \(\check\tau\) of \(\LIP_{b,1}(\check\X,\tilde\tau,\check\sfd)\), where \(\tilde\tau\) denotes the
quotient topology of \(\tau\). Moreover, we denote by \({\sf c}={\sf c}_{(\X,\tau,\sfd)}\colon\X\to\check\X\)
the quotient map. Let us check that \((\check\X,\check\tau,\check\sfd)\) is an e.m.t.\ space. Since
\(\LIP_{b,1}(\check\X,\check\tau,\check\sfd)=\LIP_{b,1}(\check\X,\tilde\tau,\check\sfd)\), we have that
\(\check\tau\) is the initial topology of \(\LIP_{b,1}(\check\X,\check\tau,\check\sfd)\). Moreover, for any
\(f\in\LIP_{b,1}(\X,\tau,\tilde\sfd)\) there exists a unique function \([f]\colon\check\X\to\R\) such that
\[\begin{tikzcd}
\X \arrow[r,"f"] \arrow[d,swap,"{\sf c}"] & \R \\
\check\X \arrow[ur,swap,"{[}f{]}"] &
\end{tikzcd}\]
is a commutative diagram, and \([f]\in\LIP_{b,1}(\check\X,\check\tau,\check\sfd)\). Given that \(\LIP_{b,1}(\X,\tau,\sfd)\subseteq\LIP_{b,1}(\X,\tau,\tilde\sfd)\),
for any \(x,y\in\X\) we have
\begin{align*}
\check\sfd({\sf c}(x),{\sf c}(y))&=\tilde\sfd(x,y)=\sup\big\{|f(x)-f(y)|\;\big|\;f\in\LIP_{b,1}(\X,\tau,\sfd)\big\}\\
&=\sup\big\{|[f]({\sf c}(x))-[f]({\sf c}(y))|\;\big|\;f\in\LIP_{b,1}(\X,\tau,\sfd)\big\}\\
&\leq\sup\big\{|g({\sf c}(x))-g({\sf c}(y))|\;\big|\;g\in\LIP_{b,1}(\check\X,\check\tau,\check\sfd)\big\}
\leq\check\sfd({\sf c}(x),{\sf c}(y)),
\end{align*}
so that \(\check\sfd(w,z)=\sup\{|g(w)-g(z)|:g\in\LIP_{b,1}(\check\X,\check\tau,\check\sfd)\}\)
for all \(w,z\in\check\X\). All in all, \((\check\X,\check\tau,\check\sfd)\)
is an e.m.t.\ space. Note that \(\tilde\sfd\leq\sfd\) and \(\check\tau\subseteq\tilde\tau\), thus
\({\sf c}\colon(\X,\tau,\sfd)\to(\check\X,\check\tau,\check\sfd)\) is a continuous-short map.

Let us pass to the verification of the universal property. Fix an e.m.t.\ space \((\Y,\tau_\Y,\sfd_\Y)\)
and a continuous-short map \(\varphi\colon(\X,\tau,\sfd)\to(\Y,\tau_\Y,\sfd_\Y)\). Given any \(x,y\in\X\), we can estimate
\begin{equation}\label{eq:est_d_Y}\begin{split}
\sfd_\Y(\varphi(x),\varphi(y))&=\sup\big\{|g(\varphi(x))-g(\varphi(y))|\;\big|\;g\in\LIP_{b,1}(\Y,\tau_\Y,\sfd_\Y)\big\}\\
&\leq\sup\big\{|f(x)-f(y)|\;\big|\;f\in\LIP_{b,1}(\X,\tau,\sfd)\big\}=\tilde\sfd(x,y).
\end{split}\end{equation}
In particular, \(\sfd_\Y(\varphi(x),\varphi(y))=0\) whenever \(x,y\in\X\) satisfy \(\tilde\sfd(x,y)=0\). Recalling
that \(\check\X=\X/\tilde\sfd\), we deduce that there exists a unique map \(\tilde\varphi\colon\check\X\to\Y\) such that
\[\begin{tikzcd}
\X \arrow[r,"{\sf c}"] \arrow[dr,swap,"\varphi"] & \check\X \arrow[d,"\tilde\varphi"] \\
& \Y
\end{tikzcd}\]
is a commutative diagram. We claim that \(\tilde\varphi\colon(\check\X,\check\tau)\to(\Y,\tau_\Y)\) is continuous.
Since \(\tau_\Y\) is the initial topology of \(\LIP_{b,1}(\Y,\tau_\Y,\sfd_\Y)\), the claim is equivalent to the
continuity of \(f\circ\tilde\varphi\colon(\check\X,\check\tau)\to\R\) for every \(f\in\LIP_{b,1}(\Y,\tau_\Y,\sfd_\Y)\).
For any such function \(f\), it holds that \(f\circ\varphi\in\LIP_{b,1}(\X,\tau,\sfd)\) and thus
\(f\circ\tilde\varphi=[f\circ\varphi]\in\LIP_{b,1}(\check\X,\check\tau,\check\sfd)\). In particular,
\(f\circ\tilde\varphi\) is \(\check\tau\)-continuous, thus the claim is proved. Next, by applying \eqref{eq:est_d_Y}
we obtain that
\begin{align*}
\sfd_\Y\big(\tilde\varphi({\sf c}(y)),\tilde\varphi({\sf c}(x))\big)&=\sfd_\Y(\varphi(x),\varphi(y))
\leq\tilde\sfd(x,y)=\check\sfd({\sf c}(x),{\sf c}(y))\quad\text{ for every }x,y\in\X,
\end{align*}
which implies that \(\tilde\varphi\colon(\check\X,\check\sfd)\to(\Y,\sfd_\Y)\) is a short map thanks to the surjectivity
of \({\sf c}\). All in all, \(\tilde\varphi\colon(\check\X,\check\tau,\check\sfd)\to(\Y,\tau_\Y,\sfd_\Y)\)
is a continuous-short map. Therefore, the universal property is proved.

Finally, let us check that \(CS((\X,\tau,\sfd);(\Y,\tau_\Y,\sfd_\Y))\ni\varphi\mapsto\tilde\varphi\in CS((\check\X,\check\tau,\check\sfd);(\Y,\tau_\Y,\sfd_\Y))\)
is a bijection. To prove injectivity, just note that if \(\varphi_1,\varphi_2\in CS((\X,\tau,\sfd);(\Y,\tau_\Y,\sfd_\Y))\) satisfy \(\tilde\varphi_1=\tilde\varphi_2\),
then \(\varphi_1(x)=\tilde\varphi_1({\sf c}(x))=\tilde\varphi_2({\sf c}(x))=\varphi_2(x)\) for every \(x\in\X\), so that \(\varphi_1=\varphi_2\). Moreover, given any
function \(\psi\in CS((\check\X,\check\tau,\check\sfd);(\Y,\tau_\Y,\sfd_\Y))\), we have that \(\varphi\coloneqq\psi\circ{\sf c}\in CS((\X,\tau,\sfd);(\Y,\tau_\Y,\sfd_\Y))\)
satisfies \(\tilde\varphi=\psi\) thanks to the universal property, thus also surjectivity is proved.
\end{proof}

As the proof of Proposition \ref{prop:emt-fication_constr} shows, it holds that
\begin{equation}\label{eq:c_surj}
{\sf c}_{(\X,\tau,\sfd)}\colon\X\to\check\X\quad\text{ is surjective.}
\end{equation}
\begin{remark}\label{rmk:emt_recover}{\rm
The proof of Proposition \ref{prop:emt-fication_constr} also shows that \((\check\X,\check\tau,\check\sfd)\) possesses the following feature:
\emph{if \(\sfd\) can be \(\tau\)-recovered, then \(\tilde\sfd=\sfd\) and \(\check\sfd({\sf c}(x),{\sf c}(y))=\sfd(x,y)\) for every \(x,y\in\X\).}
\fr}\end{remark}
\begin{remark}\label{rmk:emt_cpt}{\rm
Another byproduct of the proof of Proposition \ref{prop:emt-fication_constr} is the following: \emph{if \((\X,\tau,\sfd)\) is a pre-e.pm.t.\ space with
\((\X,\tau)\) compact, then \((\check\X,\check\tau)\) is compact and a quotient of \((\X,\tau)\).} Indeed, \(\tilde\tau\) is compact (as a quotient of \(\tau\)) and
the proof of Proposition \ref{prop:emt-fication_constr} shows that \(\LIP_{b,1}(\check\X,\tilde\tau,\check\sfd)\) separates the points of \(\check\X\), so that Remark
\ref{rmk:criterion_initial_cpt} ensures that \(\check\tau=\tilde\tau\), thus accordingly \((\check\X,\check\tau)\) is compact.
\fr}\end{remark}
\begin{lemma}\label{lem:emt-fication_morphisms}
Let \((\X,\tau_\X,\sfd_\X)\) and \((\Y,\tau_\Y,\sfd_\Y)\) be pre-e.pm.t.\ spaces. Let \(\varphi\colon(\X,\tau_\X,\sfd_\X)\to(\Y,\tau_\Y,\sfd_\Y)\) be a continuous-short map.
Let \((\check\X,\check\tau_\X,\check\sfd_\X)\), \((\check\Y,\check\tau_\Y,\check\sfd_\Y)\), \({\sf c}_{(\X,\tau_\X,\sfd_\X)}\) and \({\sf c}_{(\Y,\tau_\Y,\sfd_\Y)}\) be
given by Proposition \ref{prop:emt-fication_constr}. Then there exists a unique continuous-short map
\(\check\varphi\colon(\check\X,\check\tau_\X,\check\sfd_\X)\to(\check\Y,\check\tau_\Y,\check\sfd_\Y)\) such that
\begin{equation}\label{eq:diagram_check_phi}\begin{tikzcd}
(\X,\tau_\X,\sfd_\X) \arrow[d,swap,"{\sf c}_{(\X,\tau_\X,\sfd_\X)}"] \arrow[r,"\varphi"] & (\Y,\tau_\Y,\sfd_\Y) \arrow[d,"{\sf c}_{(\Y,\tau_\Y,\sfd_\Y)}"] \\
(\check\X,\check\tau_\X,\check\sfd_\X) \arrow[r,swap,"\check\varphi"] & (\check\Y,\check\tau_\Y,\check\sfd_\Y)
\end{tikzcd}\end{equation}
is a commutative diagram.
\end{lemma}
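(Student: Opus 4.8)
The plan is to reduce the statement to the universal property already established in Proposition \ref{prop:emt-fication_constr}. First I would form the composite
\[
{\sf c}_{(\Y,\tau_\Y,\sfd_\Y)}\circ\varphi\colon(\X,\tau_\X,\sfd_\X)\longrightarrow(\check\Y,\check\tau_\Y,\check\sfd_\Y),
\]
which is a continuous-short map, being the composition of the continuous-short maps $\varphi$ and ${\sf c}_{(\Y,\tau_\Y,\sfd_\Y)}$, and whose target $(\check\Y,\check\tau_\Y,\check\sfd_\Y)$ is an e.m.t.\ space. This is the crucial observation: although $\varphi$ itself does not land in an e.m.t.\ space, postcomposing with ${\sf c}_{(\Y,\tau_\Y,\sfd_\Y)}$ makes the universal property of Proposition \ref{prop:emt-fication_constr} applicable.

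Applying that universal property to the pre-e.pm.t.\ space $(\X,\tau_\X,\sfd_\X)$ (with its e.m.t.-fication $(\check\X,\check\tau_\X,\check\sfd_\X)$ and the map ${\sf c}_{(\X,\tau_\X,\sfd_\X)}$) and to the continuous-short map ${\sf c}_{(\Y,\tau_\Y,\sfd_\Y)}\circ\varphi$ with values in the e.m.t.\ space $(\check\Y,\check\tau_\Y,\check\sfd_\Y)$, we obtain a unique continuous-short map $\check\varphi\colon(\check\X,\check\tau_\X,\check\sfd_\X)\to(\check\Y,\check\tau_\Y,\check\sfd_\Y)$ satisfying $\check\varphi\circ{\sf c}_{(\X,\tau_\X,\sfd_\X)}={\sf c}_{(\Y,\tau_\Y,\sfd_\Y)}\circ\varphi$, which is exactly the commutativity of \eqref{eq:diagram_check_phi}.

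As for uniqueness of $\check\varphi$ as a map making \eqref{eq:diagram_check_phi} commute: if $\check\varphi_1$ and $\check\varphi_2$ are two such maps, then $\check\varphi_1\circ{\sf c}_{(\X,\tau_\X,\sfd_\X)}=\check\varphi_2\circ{\sf c}_{(\X,\tau_\X,\sfd_\X)}$, and since ${\sf c}_{(\X,\tau_\X,\sfd_\X)}\colon\X\to\check\X$ is surjective by \eqref{eq:c_surj}, we conclude $\check\varphi_1=\check\varphi_2$ (this is also immediate from the uniqueness clause of the universal property). There is essentially no obstacle here — the entire content of the lemma is the applicability of Proposition \ref{prop:emt-fication_constr}, and I would merely remark that this assignment $\varphi\mapsto\check\varphi$ is what makes ${\sf emt}$ act on morphisms, its functoriality (compatibility with identities and composition) then following from the same uniqueness argument.
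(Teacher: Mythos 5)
Your proof is correct and follows exactly the paper's argument: apply the universal property of Proposition \ref{prop:emt-fication_constr} to the continuous-short map \({\sf c}_{(\Y,\tau_\Y,\sfd_\Y)}\circ\varphi\), whose target \((\check\Y,\check\tau_\Y,\check\sfd_\Y)\) is an e.m.t.\ space, to obtain \(\check\varphi=({\sf c}_{(\Y,\tau_\Y,\sfd_\Y)}\circ\varphi)^\sim\) together with its uniqueness. The extra remarks on surjectivity of \({\sf c}_{(\X,\tau_\X,\sfd_\X)}\) and on functoriality are consistent with the paper but not needed for the lemma itself.
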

\begin{proof}
Note that \({\sf c}_{(\Y,\tau_\Y,\sfd_\Y)}\circ\varphi\colon(\X,\tau_\X,\sfd_\X)\to(\check\Y,\check\tau_\Y,\check\sfd_\Y)\) is a continuous-short map.
Hence, Proposition \ref{prop:emt-fication_constr} ensures that
\(\check\varphi\coloneqq({\sf c}_{(\Y,\tau_\Y,\sfd_\Y)}\circ\varphi)^\sim\colon(\check\X,\check\tau_\X,\check\sfd_\X)\to(\check\Y,\check\tau_\Y,\check\sfd_\Y)\)
is the unique continuous-short map such that the diagram in \eqref{eq:diagram_check_phi} commutes.
\end{proof}
\begin{definition}[The e.m.t.-fication functor]\label{def:e.m.t.-fication_functor}
We define the \emph{e.m.t.-fication functor}
\[
{\sf emt}\colon{\bf PreExt\Psi MetTop}\to{\bf ExtMetTop}
\]
as follows: to any object \((\X,\tau,\sfd)\in{\rm Obj}({\bf PreExt\Psi MetTop})\) we assign the object
\[
{\sf emt}((\X,\tau,\sfd))\coloneqq(\check\X,\check\tau,\check\sfd)\in{\rm Obj}({\bf ExtMetTop})
\]
given by Proposition \ref{prop:emt-fication_constr}, and to any morphism \(\varphi\colon(\X,\tau_\X,\sfd_\X)\to(\Y,\tau_\Y,\sfd_\Y)\)
in \({\bf PreExt\Psi MetTop}\) we assign the morphism \({\sf emt}(\varphi)\coloneqq\check\varphi\colon{\sf emt}((\X,\tau_\X,\sfd_\X))\to{\sf emt}((\Y,\tau_\Y,\sfd_\Y))\)
in \({\bf ExtMetTop}\) given by Lemma \ref{lem:emt-fication_morphisms}. 
\end{definition}
\begin{theorem}\label{thm:emt-fication}
The e.m.t.-fication functor \({\sf emt}\) is left adjoint to the inclusion functor
\[
\iota\colon{\bf ExtMetTop}\hookrightarrow{\bf PreExt\Psi MetTop}.
\]
In particular, \({\bf ExtMetTop}\) is a reflective subcategory of \({\bf PreExt\Psi MetTop}\), with reflector \({\sf emt}\).
Moreover, the assignment
\[
{\rm Obj}({\bf PreExt\Psi MetTop})\ni(\X,\tau,\sfd)\mapsto\iota\circ{\sf c}_{(\X,\tau,\sfd)}\colon(\X,\tau,\sfd)\to(\iota\circ{\sf emt})((\X,\tau,\sfd))
\]
given by Proposition \ref{prop:emt-fication_constr} yields a natural transformation from the identity functor \({\rm id}_{{\bf PreExt\Psi MetTop}}\)
of \({\bf PreExt\Psi MetTop}\) to the functor \(\iota\circ{\sf emt}\).
\end{theorem}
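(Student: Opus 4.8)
The plan is to deduce everything from the universal property established in Proposition \ref{prop:emt-fication_constr}, which is essentially the standard recipe for promoting a family of universal arrows into an adjunction. First I would recall the general fact (see e.g.\ Mac Lane): to exhibit a functor $L$ as left adjoint to $G$, it suffices to produce for each object $X$ in the domain an object $LX$ together with a universal arrow $\eta_X\colon X\to GLX$, meaning that every arrow $X\to GY$ factors uniquely through $\eta_X$ via some arrow $LX\to Y$. In our situation $G=\iota$ is the (full, faithful) inclusion, $L={\sf emt}$, and the candidate universal arrow is $\iota\circ{\sf c}_{(\X,\tau,\sfd)}$. Proposition \ref{prop:emt-fication_constr} is precisely the statement that this arrow is universal: given an e.m.t.\ space $(\Y,\tau_\Y,\sfd_\Y)$ and a continuous-short map $\varphi\colon(\X,\tau,\sfd)\to\iota(\Y,\tau_\Y,\sfd_\Y)$, there is a unique $\tilde\varphi\colon(\check\X,\check\tau,\check\sfd)\to(\Y,\tau_\Y,\sfd_\Y)$ with $\iota(\tilde\varphi)\circ(\iota\circ{\sf c}_{(\X,\tau,\sfd)})=\varphi$, and moreover $\varphi\mapsto\tilde\varphi$ is a bijection, which is exactly the hom-set isomorphism $CS((\X,\tau,\sfd);\iota(\Y,\tau_\Y,\sfd_\Y))\cong CS({\sf emt}((\X,\tau,\sfd));(\Y,\tau_\Y,\sfd_\Y))$ defining the adjunction.

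Next I would verify naturality, which splits into two checks. On one hand, the family $\{\iota\circ{\sf c}_{(\X,\tau,\sfd)}\}$ should be a natural transformation $\mathrm{id}_{\bf PreExt\Psi MetTop}\Rightarrow\iota\circ{\sf emt}$: for a continuous-short map $\varphi\colon(\X,\tau_\X,\sfd_\X)\to(\Y,\tau_\Y,\sfd_\Y)$ one needs the square
\[\begin{tikzcd}
(\X,\tau_\X,\sfd_\X) \arrow[r,"\varphi"] \arrow[d,swap,"{\sf c}_{(\X,\tau_\X,\sfd_\X)}"] & (\Y,\tau_\Y,\sfd_\Y) \arrow[d,"{\sf c}_{(\Y,\tau_\Y,\sfd_\Y)}"] \\
(\iota\circ{\sf emt})((\X,\tau_\X,\sfd_\X)) \arrow[r,swap,"\iota(\check\varphi)"] & (\iota\circ{\sf emt})((\Y,\tau_\Y,\sfd_\Y))
\end{tikzcd}\]
to commute; but this is literally the defining property of $\check\varphi={\sf emt}(\varphi)$ recorded in \eqref{eq:diagram_check_phi} of Lemma \ref{lem:emt-fication_morphisms}. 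This simultaneously settles the last sentence of the theorem and, being the unit of the adjunction, feeds into the hom-set naturality. On the other hand, one must check that the hom-set bijection \eqref{eq:bij_CS_emt} is natural in both variables; this is the routine diagram chase: naturality in $\Y$ follows by post-composing with a morphism $\psi\colon(\Y,\tau_\Y,\sfd_\Y)\to(\Y',\tau_{\Y'},\sfd_{\Y'})$ and using uniqueness in the universal property to identify $(\psi\circ\varphi)^\sim$ with $\psi\circ\tilde\varphi$; naturality in $(\X,\tau,\sfd)$ uses the naturality of ${\sf c}$ just established, again combined with the uniqueness clause.

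Finally I would observe that, ${\sf emt}$ being left adjoint to a fully faithful functor with the counit ${\sf emt}\circ\iota\Rightarrow\mathrm{id}_{\bf ExtMetTop}$ an isomorphism (indeed, for an e.m.t.\ space $\sfd$ can be $\tau$-recovered, so by Remark \ref{rmk:emt_recover} the map ${\sf c}$ is a distance-preserving homeomorphism onto $\check\X$, using also \eqref{eq:c_surj} and Remark \ref{rmk:emt_cpt}-style reasoning for the topology), the subcategory ${\bf ExtMetTop}$ is by definition reflective in ${\bf PreExt\Psi MetTop}$ with reflector ${\sf emt}$; this is immediate once the adjunction is in place. The only mildly delicate point is the bookkeeping of the two naturality squares and the insertion of the inclusion functor $\iota$ in the right places, since $\iota$ is suppressed notationally almost everywhere; the mathematical content is entirely contained in Proposition \ref{prop:emt-fication_constr} and Lemma \ref{lem:emt-fication_morphisms}, so I expect no genuine obstacle, only the care needed to phrase ``family of universal arrows $\Rightarrow$ adjunction'' cleanly.
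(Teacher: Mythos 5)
Your proposal is correct and follows essentially the same route as the paper: both derive the adjunction directly from the universal property and hom-set bijection of Proposition \ref{prop:emt-fication_constr}, and both obtain the naturality of \((\X,\tau,\sfd)\mapsto\iota\circ{\sf c}_{(\X,\tau,\sfd)}\) from Lemma \ref{lem:emt-fication_morphisms}. The only (cosmetic) difference is that you phrase the adjunction via the unit as a family of universal arrows into \(\iota\circ{\sf emt}\), whereas the paper verifies its counit-style definition from the appendix by taking \(\varepsilon_{(\X,\tau,\sfd)}={\rm id}_{(\X,\tau,\sfd)}\); the two formulations are standardly equivalent and rest on the same ingredients.
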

\begin{proof}
Fix an arbitrary object \((\X,\tau,\sfd)\) of \({\bf ExtMetTop}\) and define
\[
\varepsilon_{(\X,\tau,\sfd)}\coloneqq{\rm id}_{(\X,\tau,\sfd)}\colon({\sf emt}\circ\iota)((\X,\tau,\sfd))=(\X,\tau,\sfd)\to(\X,\tau,\sfd).
\]
Given an object \((\Y,\tau_\Y,\sfd_\Y)\) of \({\bf PreExt\Psi MetTop}\) and a morphism
\(\varphi\colon{\sf emt}((\Y,\tau_\Y,\sfd_\Y))\to(\X,\tau,\sfd)\) in \({\bf ExtMetTop}\),
it follows from the last part of the statement of Proposition \ref{prop:emt-fication_constr} that there exists
a unique morphism \(\psi\colon(\Y,\tau_\Y,\sfd_\Y)\to\iota((\X,\tau,\sfd))=(\X,\tau,\sfd)\) in \({\bf PreExt\Psi MetTop}\)
such that
\[\begin{tikzcd}[column sep = huge]
(\Y,\tau_\Y,\sfd_\Y) \arrow[r,"{\sf c}_{(\Y,\tau_\Y,\sfd_\Y)}"] \arrow[rd,swap,"\psi"] & {\sf emt}((\Y,\tau_\Y,\sfd_\Y)) \arrow[d,swap,"{\sf emt}(\psi)"] \arrow[dr,"\varphi"] & \\
& ({\sf emt}\circ\iota)((\X,\tau,\sfd)) \arrow[r,swap,"\varepsilon_{(\X,\tau,\sfd)}"] & (\X,\tau,\sfd)
\end{tikzcd}\]
is a commutative diagram. This proves that \({\sf emt}\dashv\iota\). Moreover, to show that the assignment \((\X,\tau,\sfd)\mapsto\iota\circ{\sf c}_{(\X,\tau,\sfd)}\)
is a natural transformation from \({\rm id}_{{\bf PreExt\Psi MetTop}}\) to \(\iota\circ{\sf emt}\), note that
\[\begin{tikzcd}[column sep = huge]
(\X,\tau_\X,\sfd_\X) \arrow[r,"{\sf c}_{(\X,\tau_\X,\sfd_\X)}"] \arrow[d,swap,"\varphi"] & {\sf emt}((\X,\tau_\X,\sfd_\X)) \arrow[r,hook,"\iota"] \arrow[d,swap,"{\sf emt}(\varphi)"] &
(\iota\circ{\sf emt})((\X,\tau_\X,\sfd_\X)) \arrow[d,"(\iota\circ{\sf emt})(\varphi)"] \\
(\Y,\tau_\Y,\sfd_\Y) \arrow[r,swap,"{\sf c}_{(\Y,\tau_\Y,\sfd_\Y)}"] & {\sf emt}((\Y,\tau_\Y,\sfd_\Y)) \arrow[r,hook,swap,"\iota"] & (\iota\circ{\sf emt})((\Y,\tau_\Y,\sfd_\Y))
\end{tikzcd}\]
is a commutative diagram for every morphism \(\varphi\colon(\X,\tau_\X,\sfd_\X)\to(\Y,\tau_\Y,\sfd_\Y)\) in \({\bf PreExt\Psi MetTop}\),
thanks to Lemma \ref{lem:emt-fication_morphisms}. Therefore, the statement is achieved.
\end{proof}
\subsection{Bicompleteness of \texorpdfstring{\({\bf ExtMetTop}\)}{ExtMetTop}}
We now prove the bicompleteness of \({\bf PreExt\Psi MetTop}\) and \({\bf ExtMetTop}\). The first step is to show the bicompleteness
of \({\bf PreExt\Psi MetTop}\), which -- heuristically speaking -- boils down to the fact that each (co)limit either in \({\bf Top}\) or in
\({\bf Ext\Psi Met}\) agrees (as a set) with the corresponding (co)limit in \({\bf Set}\), as it is evident from Section \ref{s:lim_Set_Top_ExtPMet}.
\begin{theorem}\label{thm:PreExtPMetTop_bicompl}
The category \({\bf PreExt\Psi MetTop}\) is bicomplete.
\end{theorem}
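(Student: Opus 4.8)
The plan is to invoke the general existence theorem for limits and colimits (Theorem \ref{thm:exist_thm_lim}), which reduces bicompleteness to the existence of all small products, equalisers, small coproducts and coequalisers in \({\bf PreExt\Psi MetTop}\). Using the description of this category as the strict pullback \({\bf SP}(U_{\bf Top},U_{\bf Ext\Psi Met})\) (Remark \ref{rmk:equiv_PreExtPMetTop}), the strategy is to build each such (co)limit by separately equipping the underlying set with the relevant topological structure (via the corresponding construction in \({\bf Top}\), as recalled in Section \ref{s:lim_Set_Top_ExtPMet}) and the relevant metric structure (via the corresponding construction in \({\bf Ext\Psi Met}\)), exploiting the fact that all four constructions have the \emph{same} underlying set, namely the one coming from \({\bf Set}\).

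Concretely, I would proceed construction by construction. For products, given a family \(\{(\X_i,\tau_i,\sfd_i)\}_{i\in I}\), I set the product to be \((\prod\X_\star,\otimes_{i\in I}\tau_i,\otimes_{i\in I}\sfd_i)\) with the canonical projections \(\pi_i\); since each \(\pi_i\) is continuous (product topology) and short (\(\sup\)-pseudodistance), it is a continuous-short map, and the universal property follows by pairing the universal property in \({\bf Top}\) with that in \({\bf Ext\Psi Met}\) — a map into the product is continuous-short iff all its components are, because this is true separately at the topological and the metric level. For equalisers of \(\varphi,\psi\colon(\X,\tau_\X,\sfd_\X)\to(\Y,\tau_\Y,\sfd_\Y)\), I equip the set \(\X_{\varphi,\psi}=\{x:\varphi(x)=\psi(x)\}\) with the relative topology \(\tau_\X\llcorner\X_{\varphi,\psi}\) and the restricted pseudodistance \(\sfd_\X|_{\X_{\varphi,\psi}\times\X_{\varphi,\psi}}\), and again the universal property is the conjunction of the two one-sided ones. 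For coproducts I take \((\coprod\X_\star,\bigsqcup_i\tau_i,\bigsqcup_i\sfd_i)\), and for coequalisers I take \((\Y_{\varphi,\psi},\tau_{\varphi,\psi},\sfd_{\varphi,\psi})\) with the quotient topology of \(\tau_\Y\) and the infimum-over-chains pseudodistance \(\sfd_{\varphi,\psi}\); in each case the projection maps are simultaneously continuous and short by the respective computations in Section \ref{s:lim_Set_Top_ExtPMet}, and the universal properties combine.

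The one point requiring a little care is the verification that a cocone (or cone) map out of (or into) the candidate colimit (limit) is \emph{both} continuous and short: one must observe that the underlying-set map furnished by the universal property in \({\bf Set}\) is the same map witnessing both the \({\bf Top}\) universal property and the \({\bf Ext\Psi Met}\) universal property, so its continuity comes from the former and its non-expansiveness from the latter, hence it is a morphism of \({\bf PreExt\Psi MetTop}\); and it is the unique such, since uniqueness already holds at the level of \({\bf Set}\). This is the essential mechanism and it is entirely formal — the heart of the matter, already recorded in Section \ref{s:lim_Set_Top_ExtPMet}, is that limits and colimits in \({\bf Top}\) and in \({\bf Ext\Psi Met}\) are \emph{created} by the forgetful functors to \({\bf Set}\). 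I expect no genuine obstacle here; the main (minor) care is simply in checking that the coequaliser pseudodistance \(\sfd_{\varphi,\psi}\) is well defined on equivalence classes and that it makes \(\pi_{\varphi,\psi}\) short, both of which are handled by the description already given in Section \ref{s:lim_Set_Top_ExtPMet}. Having produced all small (co)products and (co)equalisers, an application of Theorem \ref{thm:exist_thm_lim} completes the proof that \({\bf PreExt\Psi MetTop}\) is bicomplete.
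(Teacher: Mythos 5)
Your proposal is correct and follows essentially the same route as the paper: reduce to (co)products and (co)equalisers via Theorem \ref{thm:exist_thm_lim}, view \({\bf PreExt\Psi MetTop}\) as the strict pullback \({\bf SP}(U_{\bf Top},U_{\bf Ext\Psi Met})\), and exploit the fact that the explicit (co)limits in \({\bf Top}\) and \({\bf Ext\Psi Met}\) from Section \ref{s:lim_Set_Top_ExtPMet} share the same underlying \({\bf Set}\)-level (co)limit. The only cosmetic difference is that the paper packages your construction-by-construction verification of the combined universal property into the abstract Lemma \ref{lem:limits_SP}, whereas you carry it out directly.
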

\begin{proof}
We recall from Remark \ref{rmk:equiv_PreExtPMetTop} that \({\bf PreExt\Psi MetTop}={\bf SP}(U,V)\),
where \(U\colon{\bf Top}\to{\bf Set}\) and \(V\colon{\bf Ext\Psi Met}\to{\bf Set}\) denote the forgetful functors.
We know from Section \ref{s:lim_Set_Top_ExtPMet} that \({\bf Top}\) and \({\bf Ext\Psi Met}\) are bicomplete categories.
Since the results stated in Section \ref{s:lim_Set_Top_ExtPMet} show also that every product (resp.\ equaliser) in
\({\bf Top}\) or \({\bf Ext\Psi Met}\) is just the product (resp.\ equaliser) in \({\bf Set}\) equipped with a
topology or an extended pseudodistance, and likewise for coproducts and coequalisers, it readily follows from
Lemma \ref{lem:limits_SP} that \({\bf SP}(U,V)\) admits all (co)products and (co)equalisers. Applying Theorem
\ref{thm:exist_thm_lim}, we thus conclude that \({\bf PreExt\Psi MetTop}={\bf SP}(U,V)\) is bicomplete.
\end{proof}
\begin{remark}\label{rmk:descript_lim_Pre}{\rm
Since in Section \ref{s:lim_Set_Top_ExtPMet} we have given explicit descriptions of all (co)products and
(co)equalisers in \({\bf Top}\) and in \({\bf Ext\Psi Met}\), by applying Lemma \ref{lem:limits_SP} one
can describe all (co)products and (co)equalisers in \({\bf PreExt\Psi MetTop}\). However, for brevity's
sake we avoid doing it.
\fr}\end{remark}

It is now immediate to obtain the main result of this paper:
\begin{theorem}\label{thm:ExtMetTop_bicompl}
The category \({\bf ExtMetTop}\) is bicomplete.
\end{theorem}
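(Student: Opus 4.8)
The plan is to combine three ingredients already established in the paper: the bicompleteness of the ambient category \({\bf PreExt\Psi MetTop}\) (Theorem \ref{thm:PreExtPMetTop_bicompl}), the fact that \({\bf ExtMetTop}\) is a reflective subcategory of \({\bf PreExt\Psi MetTop}\) via the e.m.t.-fication functor \({\sf emt}\dashv\iota\) (Theorem \ref{thm:emt-fication}), and the general categorical principle recorded in Proposition \ref{prop:create_lim} (referenced in the introduction as the tool that lets one transport (co)completeness along a reflection). Concretely, I would argue as follows.

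First, for limits: a reflective subcategory inclusion \(\iota\colon{\bf ExtMetTop}\hookrightarrow{\bf PreExt\Psi MetTop}\) is in particular a right adjoint, hence \(\iota\) preserves all limits; moreover, being a full subcategory inclusion, \(\iota\) creates them. More precisely, given any small diagram \(D\colon\mathcal J\to{\bf ExtMetTop}\), we may form the limit \(\varprojlim(\iota\circ D)\) in \({\bf PreExt\Psi MetTop}\), which exists by Theorem \ref{thm:PreExtPMetTop_bicompl}. Since \({\bf ExtMetTop}\) is reflective and the counit \(\varepsilon\) of \({\sf emt}\dashv\iota\) is an isomorphism (indeed \(\varepsilon_{(\X,\tau,\sfd)}={\rm id}\) by the proof of Theorem \ref{thm:emt-fication}), the standard argument shows that this limit object already lies in \({\bf ExtMetTop}\) up to isomorphism — or one simply invokes Proposition \ref{prop:create_lim} directly — and that it serves as the limit of \(D\) in \({\bf ExtMetTop}\). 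This gives completeness.

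Second, for colimits: here the reflector \({\sf emt}\), being a left adjoint, preserves colimits. Given a small diagram \(D\colon\mathcal J\to{\bf ExtMetTop}\), form the colimit \(C\coloneqq\varinjlim(\iota\circ D)\) in \({\bf PreExt\Psi MetTop}\), which again exists by Theorem \ref{thm:PreExtPMetTop_bicompl}. Then \({\sf emt}(C)\), equipped with the cocone obtained by composing the structure maps of \(C\) with \({\sf c}_C\) (and using that \({\sf emt}\circ\iota\cong{\rm id}\) on objects of \({\bf ExtMetTop}\)), is the colimit of \(D\) in \({\bf ExtMetTop}\); the universal property is checked by the adjunction bijection \eqref{eq:bij_CS_emt}, which turns cocones in \({\bf ExtMetTop}\) under \(\iota\circ D\) into cocones under \(D\). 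This is exactly the statement of Proposition \ref{prop:create_lim} applied to the reflector, so the proof reduces to citing it.

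The only real subtlety — and the step I would state most carefully — is the colimit case, because unlike limits the colimit of \(\iota\circ D\) taken in \({\bf PreExt\Psi MetTop}\) need \emph{not} itself be an e.m.t.\ space (a coequaliser or coproduct in \({\bf PreExt\Psi MetTop}\) typically fails the \(\tau\)-recovery axiom), so one genuinely must apply \({\sf emt}\) to it rather than hoping it lands in the subcategory on the nose. But this is precisely what the reflectivity buys us, and Proposition \ref{prop:create_lim} packages it cleanly. Thus the entire proof is: \emph{By Theorem \ref{thm:PreExtPMetTop_bicompl} the category \({\bf PreExt\Psi MetTop}\) is bicomplete, by Theorem \ref{thm:emt-fication} the category \({\bf ExtMetTop}\) is a reflective subcategory of it, and therefore Proposition \ref{prop:create_lim} yields that \({\bf ExtMetTop}\) is bicomplete, with limits created by \(\iota\) and colimits computed by applying \({\sf emt}\) to the colimit in \({\bf PreExt\Psi MetTop}\).}
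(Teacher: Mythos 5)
Your proof is correct and follows essentially the same route as the paper: both deduce the result from the bicompleteness of \({\bf PreExt\Psi MetTop}\) (Theorem \ref{thm:PreExtPMetTop_bicompl}) together with the reflectivity of \({\bf ExtMetTop}\) established in Theorem \ref{thm:emt-fication}, the paper citing Corollary \ref{cor:compl_preserved} where you unpack the underlying Proposition \ref{prop:create_lim} directly. Your additional remarks on how limits are created by the inclusion and colimits are obtained by applying \({\sf emt}\) are accurate and consistent with the paper's discussion following the theorem.
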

\begin{proof}
Since \({\bf ExtMetTop}\) is a reflective subcategory of \({\bf PreExt\Psi MetTop}\) by Theorem \ref{thm:emt-fication},
and \({\bf PreExt\Psi MetTop}\) is a bicomplete category by Theorem \ref{thm:PreExtPMetTop_bicompl}, we deduce from
Corollary \ref{cor:compl_preserved} that also \({\bf ExtMetTop}\) is bicomplete.
\end{proof}

Applying Proposition \ref{prop:create_lim} and taking Remark \ref{rmk:descript_lim_Pre} into account,
it is actually possible to give an explicit description of all (co)products and (co)equalisers in \({\bf ExtMetTop}\),
but we do not do it.
\subsection{The compactification functor}\label{s:compactification}
In this section, we prove the existence of a compactification functor for e.m.t.\ spaces, akin to the
Stone--\v{C}ech compactification functor for topological spaces.
\begin{theorem}[Compactification of an e.m.t.\ space]\label{thm:compactif_emt}
Let \((\X,\tau,\sfd)\) be an e.m.t.\ space. Then there exist a compact e.m.t.\ space \((\gamma\X,\gamma\tau,\gamma\sfd)\)
and a continuous-short map
\[
\iota=\iota_{(\X,\tau,\sfd)}\colon(\X,\tau,\sfd)\to(\gamma\X,\gamma\tau,\gamma\sfd)
\]
such that the following universal property is satisfied: given a compact e.m.t.\ space \((\Y,\tau_\Y,\sfd_\Y)\) and a continuous-short map
\(\varphi\colon(\X,\tau,\sfd)\to(\Y,\tau_\Y,\sfd_\Y)\), there exists a unique continuous-short map
\(\varphi_\gamma\colon(\gamma\X,\gamma\tau,\gamma\sfd)\to(\Y,\tau_\Y,\sfd_\Y)\) such that
\begin{equation}\label{eq:diagram_cpt_emt}\begin{tikzcd}
(\X,\tau,\sfd) \arrow[r,"\iota"] \arrow[dr,swap,"\varphi"] & (\gamma\X,\gamma\tau,\gamma\sfd) \arrow[d,"\varphi_\gamma"] \\
& (\Y,\tau_\Y,\sfd_\Y)
\end{tikzcd}\end{equation}
is a commutative diagram. Moreover, the following properties hold:
\begin{itemize}
\item[\(\rm i)\)] The image \(\iota(\X)\) of \(\iota\) is dense in \((\gamma\X,\gamma\tau)\).
\item[\(\rm ii)\)] \(\iota\colon(\X,\tau,\sfd)\to(\gamma\X,\gamma\tau,\gamma\sfd)\) is an embedding of e.m.t.\ spaces.
\item[\(\rm iii)\)] The map \(CS((\X,\tau,\sfd);(\Y,\tau_\Y,\sfd_\Y))\ni\varphi\mapsto\varphi_\gamma\in CS((\gamma\X,\gamma\tau,\gamma\sfd);(\Y,\tau_\Y,\sfd_\Y))\)
is bijective.
\end{itemize}
\end{theorem}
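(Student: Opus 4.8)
The plan is to construct $(\gamma\X,\gamma\tau,\gamma\sfd)$ by factoring the Stone--\v{C}ech compactification of the topological coordinate through the e.m.t.-fication functor, combined with a truncation of the extended distance (since a compact e.m.t.\ space necessarily has an everywhere-finite lower semicontinuous distance, whereas $\sfd$ may take the value $+\infty$). Concretely, first I would pass to the Stone--\v{C}ech compactification $(\beta\X,\beta\tau)$ of $(\X,\tau)$ with its canonical continuous map $i\colon\X\to\beta\X$, which is an embedding because $(\X,\tau)$ is Tychonoff (being an e.m.t.\ space). On $\beta\X$ one must equip a suitable extended pseudodistance: the natural candidate is the largest lower semicontinuous extended pseudodistance $\bar\sfd$ on $(\beta\X,\beta\tau)$ that pulls back under $i$ to something dominated by $\sfd$, which can be written explicitly as
\[
\bar\sfd(\xi,\eta)\coloneqq\sup\big\{|F(\xi)-F(\eta)|\;\big|\;F\in C_b(\beta\X,\beta\tau),\ F\circ i\in\LIP_{b,1}(\X,\tau,\sfd)\big\}.
\]
Since every $f\in\LIP_{b,1}(\X,\tau,\sfd)$ extends (uniquely, because $i(\X)$ is dense) to some $f_\beta\in C_b(\beta\X,\beta\tau)$, this supremum is nonempty and $\bar\sfd$ is a genuine extended pseudodistance that is $\beta\tau\otimes\beta\tau$-lower semicontinuous and satisfies $\bar\sfd(i(x),i(y))=\sfd(x,y)$ for all $x,y\in\X$ (the inequality $\leq$ from the definition and $\sfd$ being $\tau$-recoverable; the inequality $\geq$ from restricting to extended functions $f_\beta$). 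Then I would apply the e.m.t.-fication functor of Proposition~\ref{prop:emt-fication_constr} to the pre-e.pm.t.\ space $(\beta\X,\beta\tau,\bar\sfd)$ and set $(\gamma\X,\gamma\tau,\gamma\sfd)\coloneqq{\sf emt}((\beta\X,\beta\tau,\bar\sfd))$, with $\iota\coloneqq{\sf c}_{(\beta\X,\beta\tau,\bar\sfd)}\circ i$.

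Next I would verify that $(\gamma\X,\gamma\tau,\gamma\sfd)$ is a \emph{compact} e.m.t.\ space: by Remark~\ref{rmk:emt_cpt}, since $(\beta\X,\beta\tau)$ is compact, its e.m.t.-fication $(\gamma\X,\gamma\tau)$ is compact (and is a quotient of $\beta\tau$). It is an e.m.t.\ space by construction of ${\sf emt}$. For the distance-preservation part of property~(ii), I would invoke Remark~\ref{rmk:emt_recover}: since $\bar\sfd$ can be $\beta\tau$-recovered (it is by definition a supremum of $|F(\cdot)-F(\cdot)|$ over functions $F$ that are, after possibly passing to $F\wedge n$ or rescaling, elements of $\LIP_{b,1}(\beta\X,\beta\tau,\bar\sfd)$ — here one needs a small truncation argument analogous to the one in the proof of Proposition~\ref{prop:charact_cpt_emt}), the e.m.t.-fication map ${\sf c}_{(\beta\X,\beta\tau,\bar\sfd)}$ is distance-preserving, hence $\gamma\sfd(\iota(x),\iota(y))=\bar\sfd(i(x),i(y))=\sfd(x,y)$. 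The topological half of (ii), that $\iota$ is a homeomorphism onto its image, follows from the fact that $i$ is an embedding and that ${\sf c}_{(\beta\X,\beta\tau,\bar\sfd)}$ restricted to $i(\X)$ is injective with continuous inverse (injectivity because $\bar\sfd$ separates the points of $i(\X)$, as $\sfd$ does on $\X$; and one checks the initial topology $\gamma\tau$ restricted to $\iota(\X)$ agrees with $\tau$ because the generating functions are precisely the extensions of functions in $\LIP_{b,1}(\X,\tau,\sfd)$). Property~(i), density of $\iota(\X)$ in $\gamma\X$, follows since $i(\X)$ is dense in $\beta\X$ and ${\sf c}_{(\beta\X,\beta\tau,\bar\sfd)}$ is surjective and continuous.

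For the universal property, given a compact e.m.t.\ space $(\Y,\tau_\Y,\sfd_\Y)$ and a continuous-short map $\varphi\colon(\X,\tau,\sfd)\to(\Y,\tau_\Y,\sfd_\Y)$, I would first apply the Stone--\v{C}ech universal property to the continuous map $\varphi\colon(\X,\tau)\to(\Y,\tau_\Y)$ (legitimate since $(\Y,\tau_\Y)$ is compact Hausdorff), obtaining a continuous $\varphi_\beta\colon\beta\X\to\Y$ with $\varphi_\beta\circ i=\varphi$; then I would check $\varphi_\beta$ is short from $(\beta\X,\bar\sfd)$ to $(\Y,\sfd_\Y)$, which is exactly where the definition of $\bar\sfd$ is designed to work: for $g\in\LIP_{b,1}(\Y,\tau_\Y,\sfd_\Y)$ one has $g\circ\varphi_\beta\in C_b(\beta\X,\beta\tau)$ with $(g\circ\varphi_\beta)\circ i=g\circ\varphi\in\LIP_{b,1}(\X,\tau,\sfd)$, so $|g(\varphi_\beta(\xi))-g(\varphi_\beta(\eta))|\leq\bar\sfd(\xi,\eta)$, and taking the supremum over $g$ (using that $\sfd_\Y$ is $\tau_\Y$-recoverable) gives $\sfd_\Y(\varphi_\beta(\xi),\varphi_\beta(\eta))\leq\bar\sfd(\xi,\eta)$. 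Thus $\varphi_\beta$ is a continuous-short map into the e.m.t.\ space $(\Y,\tau_\Y,\sfd_\Y)$, and by the universal property of ${\sf emt}$ (Proposition~\ref{prop:emt-fication_constr}) it factors uniquely as $\varphi_\beta=\varphi_\gamma\circ{\sf c}_{(\beta\X,\beta\tau,\bar\sfd)}$ with $\varphi_\gamma$ continuous-short; then $\varphi_\gamma\circ\iota=\varphi_\gamma\circ{\sf c}_{(\beta\X,\beta\tau,\bar\sfd)}\circ i=\varphi_\beta\circ i=\varphi$. Uniqueness of $\varphi_\gamma$ follows from density of $\iota(\X)$ (property~(i)) together with Hausdorffness of $(\Y,\tau_\Y)$; bijectivity in~(iii) is then the usual formal consequence of the universal property (injectivity from density, surjectivity from the construction $\varphi\mapsto\varphi_\gamma$). \textbf{The main obstacle} I anticipate is the verification that $\bar\sfd$ as defined is the ``right'' extended pseudodistance — in particular proving $\bar\sfd(i(x),i(y))=\sfd(x,y)$ (the $\geq$ direction requires knowing that $\LIP_{b,1}(\X,\tau,\sfd)$ functions extend to $\beta\X$, which needs boundedness and that the supremum defining $\sfd$ is attained through bounded short functions) and that $\bar\sfd$ can be $\beta\tau$-recovered so that Remark~\ref{rmk:emt_recover} applies; this last point may require replacing each competitor $F$ by suitable truncations $(F\wedge\lambda)\vee(-\lambda)$ and checking these lie in $\LIP_{b,1}(\beta\X,\beta\tau,\bar\sfd)$, mirroring the truncation trick of Proposition~\ref{prop:charact_cpt_emt}.
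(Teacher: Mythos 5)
Your construction is essentially identical to the paper's: your \(\bar\sfd\) coincides with the paper's \(\beta\sfd\) (any \(F\in C_b(\beta\X,\beta\tau)\) with \(F\circ i\in\LIP_{b,1}(\X,\tau,\sfd)\) is forced by density of \(i(\X)\) to equal \((F\circ i)_\beta\)), and the subsequent application of \({\sf emt}\), the verification of the universal property via \(\varphi_\beta\), and the proofs of (i)--(iii) all follow the paper's route. Two minor remarks: the truncation you anticipate is never needed, since each competitor \(F\) is bounded and short with respect to \(\bar\sfd\) by definition, hence already lies in \(\LIP_{b,1}(\beta\X,\beta\tau,\bar\sfd)\), so \(\bar\sfd\) is \(\beta\tau\)-recovered immediately; and your parenthetical claim that a compact e.m.t.\ space has an everywhere-finite distance is false (consider a compact Tychonoff space with the \(\infty\)-discrete distance), but this misconception is never actually used in your argument.
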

\begin{proof}
Let \((\beta\X,\beta\tau)\) be the Stone--\v{C}ech compactification of \((\X,\tau)\), with embedding \(i\colon\X\hookrightarrow\beta\X\).
Any \(f\in C_b(\X,\tau)\) induces a function \(f_\beta\in C(\beta\X,\beta\tau)\), since the image of \(f\) is contained
in a compact subset of \(\R\). We now define the extended pseudodistance \(\beta\sfd\) on \(\beta\X\) as
\[
(\beta\sfd)(x,y)\coloneq\sup_{f\in\LIP_{b,1}(\X,\tau,\sfd)}|f_\beta(x)-f_\beta(y)|\quad\text{ for every }x,y\in\beta\X.
\]
As \(f_\beta\in\LIP_{b,1}(\beta\X,\beta\tau,\beta\sfd)\) for all \(f\in\LIP_{b,1}(\X,\tau,\sfd)\), it follows that \(\beta\sfd\)
can be \(\beta\tau\)-recovered.
Note that \(i\colon(\X,\tau,\sfd)\to(\beta\X,\beta\tau,\beta\sfd)\) is a continuous-short map, since it is continuous and we have
\begin{equation}\label{eq:i_isom}\begin{split}
(\beta\sfd)(i(x),i(y))&=\sup_{f\in\LIP_{b,1}(\X,\tau,\sfd)}\big|f_\beta(i(x))-f_\beta(i(y))\big|\\
&=\sup_{f\in\LIP_{b,1}(\X,\tau,\sfd)}|f(x)-f(y)|=\sfd(x,y)\quad\text{ for every }x,y\in\X.
\end{split}\end{equation}
We apply the e.m.t.-fication functor to the pre-e.pm.t.\ space \((\beta\X,\beta\tau,\beta\sfd)\): define \({\sf c}\coloneqq{\sf c}_{(\beta\X,\beta\tau,\beta\sfd)}\),
\[
(\gamma\X,\gamma\tau,\gamma\sfd)\coloneqq{\sf emt}((\beta\X,\beta\tau,\beta\sfd)),\qquad
\iota\coloneqq{\sf c}\circ i\colon(\X,\tau,\sfd)\to(\gamma\X,\gamma\tau,\gamma\sfd).
\]
Thanks to Remark \ref{rmk:emt_cpt}, \((\gamma\X,\gamma\tau,\gamma\sfd)\) is a compact e.m.t.\ space. Moreover, \(\iota\) is a
continuous-short map (as a composition of continuous-short maps). Next, let us check the universal property. Fix a compact e.m.t.\ space
\((\Y,\tau_\Y,\sfd_\Y)\) and a continuous-short map \(\varphi\colon(\X,\tau,\sfd)\to(\Y,\tau_\Y,\sfd_\Y)\). Given any \(g\in\LIP_{b,1}(\Y,\tau_\Y,\sfd_\Y)\),
we have \(g\circ\varphi\in\LIP_{b,1}(\X,\tau,\sfd)\), \(g\circ\varphi_\beta\in C(\beta\X,\beta\tau)\) and \(g\circ\varphi_\beta\circ i=g\circ\varphi\),
thus \(g\circ\varphi_\beta=(g\circ\varphi)_\beta\in\LIP_{b,1}(\beta\X,\beta\tau,\beta\sfd)\). Since \(\sfd_\Y\) can be \(\tau_\Y\)-recovered, we deduce that
\[
\sfd_\Y(\varphi_\beta(x),\varphi_\beta(y))=\sup_{g\in\LIP_{b,1}(\Y,\tau_\Y,\sfd_\Y)}|g(\varphi_\beta(x))-g(\varphi_\beta(y))|
\leq(\beta\sfd)(x,y)\quad\text{ for every }x,y\in\beta\X,
\]
so that \(\varphi_\beta\colon(\beta\X,\beta\tau,\beta\sfd)\to(\Y,\tau_\Y,\sfd_\Y)\) is a continuous-short map. Hence, by Proposition \ref{prop:emt-fication_constr}
we know that there exists a unique continuous-short map \(\varphi_\gamma\colon(\gamma\X,\gamma\tau,\gamma\sfd)\to(\Y,\tau_\Y,\sfd_\Y)\) such that
\[\begin{tikzcd}
(\X,\tau,\sfd) \arrow[r,hook,"i"] \arrow[dr,swap,"\varphi"] & (\beta\X,\beta\tau,\beta\sfd) \arrow[r,"{\sf c}"]
\arrow[d,"\varphi_\beta"] & (\gamma\X,\gamma\tau,\gamma\sfd) \arrow[dl,"\varphi_\gamma"] \\
& (\Y,\tau_\Y,\sfd_\Y) &
\end{tikzcd}\]
is a commutative diagram. Using the fact that the map in \eqref{eq:bij_CS_emt} is bijective, we deduce that \(\varphi_\gamma\) is the unique
continuous-short map for which \eqref{eq:diagram_cpt_emt} is a commutative diagram, thus proving the universal property. To conclude the proof,
let us check that i), ii) and iii) hold. Given that \(i(\X)\) is dense in \((\beta\X,\beta\tau)\) and
\({\sf c}\colon(\beta\X,\beta\tau)\to(\gamma\X,\gamma\tau)\) is a continuous surjective map, we deduce that
\(\iota(\X)={\sf c}(i(\X))\) is dense in \((\gamma\X,\gamma\tau)\), proving i). Moreover, we have already observed
that \(\beta\sfd\) can be \(\beta\tau\)-recovered, thus by applying Remark \ref{rmk:emt_recover} and \eqref{eq:i_isom}
we deduce that \(\iota\) is a distance-preserving map, thus in particular it is injective.
Since \(\iota\) is also continuous, to show that \(\iota\colon\X\to\iota(\X)\) is a homeomorphism it is
sufficient to check that it is an open map. We know that \(\{{\rm Coz}(f):f\in\LIP_{b,1}(\X,\tau,\sfd)\}\)
is a basis for the topology \(\tau\), where the cozero set of the function \(f\) is defined as
\({\rm Coz}(f)\coloneq\{f\neq 0\}\). Given that \(\iota({\rm Coz}(f))={\rm Coz}(f_\gamma)\cap\iota(\X)\) and
\(f_\gamma\in C(\gamma\X,\gamma\tau)\), we conclude that \(\iota({\rm Coz}(f))\) is open in \(\iota(\X)\),
whence it follows that \(\iota\colon\X\to\iota(\X)\) is an open map and thus ii) is proved. Finally,
it is clear that the map \(\varphi\mapsto\varphi_\gamma\)
is both injective (as \(\varphi_\gamma=\psi_\gamma\) trivially implies that \(\varphi=\varphi_\gamma\circ\iota=\psi_\gamma\circ\iota=\psi\))
and surjective (as a consequence of ii)), thus accordingly also iii) is proved.
\end{proof}
\begin{lemma}\label{lem:cpt_morph}
Let \((\X,\tau_\X,\sfd_\X)\), \((\Y,\tau_\Y,\sfd_\Y)\) be e.m.t.\ spaces. Let
\(\varphi\colon(\X,\tau_\X,\sfd_\X)\to(\Y,\tau_\Y,\sfd_\Y)\) be a continuous-short map.
Let \((\gamma\X,\gamma\tau_\X,\gamma\sfd_\X)\), \((\gamma\Y,\gamma\tau_\Y,\gamma\sfd_\Y)\),
\(\iota_\X\colon\X\to\gamma\X\) and \(\iota_\Y\colon\Y\to\gamma\Y\) be given by Theorem \ref{thm:compactif_emt}.
Then there exists a unique continuous-short map
\[
\gamma\varphi\colon(\gamma\X,\gamma\tau_\X,\gamma\sfd_\X)\to(\gamma\Y,\gamma\tau_\Y,\gamma\sfd_\Y)
\]
such that the following diagram commutes:
\begin{equation}\label{eq:diagram_cpt}\begin{tikzcd}
(\X,\tau_\X,\sfd_\X) \arrow[r,"\iota_\X"] \arrow[d,swap,"\varphi"] & (\gamma\X,\gamma\tau_\X,\gamma\sfd_\X) \arrow[d,"\gamma\varphi"] \\
(\Y,\tau_\Y,\sfd_\Y) \arrow[r,swap,"\iota_\Y"] & (\gamma\Y,\gamma\tau_\Y,\gamma\sfd_\Y)
\end{tikzcd}\end{equation}
\end{lemma}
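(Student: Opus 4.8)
The plan is to reduce this to the universal property of the compactification established in Theorem \ref{thm:compactif_emt}. First I would consider the composite continuous-short map $\iota_\Y\circ\varphi\colon(\X,\tau_\X,\sfd_\X)\to(\gamma\Y,\gamma\tau_\Y,\gamma\sfd_\Y)$; this is a continuous-short map from the e.m.t.\ space $(\X,\tau_\X,\sfd_\X)$ into the \emph{compact} e.m.t.\ space $(\gamma\Y,\gamma\tau_\Y,\gamma\sfd_\Y)$ (compactness being part of the conclusion of Theorem \ref{thm:compactif_emt} applied to $(\Y,\tau_\Y,\sfd_\Y)$).

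Then I would invoke the universal property in Theorem \ref{thm:compactif_emt} for the e.m.t.\ space $(\X,\tau_\X,\sfd_\X)$ and the compact target $(\gamma\Y,\gamma\tau_\Y,\gamma\sfd_\Y)$ together with the continuous-short map $\iota_\Y\circ\varphi$: this yields a unique continuous-short map
\[
\gamma\varphi\coloneqq(\iota_\Y\circ\varphi)_\gamma\colon(\gamma\X,\gamma\tau_\X,\gamma\sfd_\X)\to(\gamma\Y,\gamma\tau_\Y,\gamma\sfd_\Y)
\]
such that $\gamma\varphi\circ\iota_\X=\iota_\Y\circ\varphi$, which is precisely the commutativity of the diagram \eqref{eq:diagram_cpt}. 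Uniqueness of $\gamma\varphi$ with the stated property is likewise immediate from the uniqueness clause in that universal property: any continuous-short map $\psi\colon(\gamma\X,\gamma\tau_\X,\gamma\sfd_\X)\to(\gamma\Y,\gamma\tau_\Y,\gamma\sfd_\Y)$ satisfying $\psi\circ\iota_\X=\iota_\Y\circ\varphi$ must coincide with $(\iota_\Y\circ\varphi)_\gamma$.

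There is essentially no obstacle here; the lemma is a formal consequence of the universal property, exactly analogous to how one extends continuous maps to Stone--\v{C}ech compactifications functorially. The only point worth a sentence of care is checking that $\iota_\Y\circ\varphi$ is indeed continuous-short (composition of continuous-short maps is continuous-short) and that its codomain is a compact e.m.t.\ space so that the universal property of Theorem \ref{thm:compactif_emt} applies; both are immediate. If desired, one could alternatively note that this lemma, together with Theorem \ref{thm:compactif_emt}, is exactly the statement that $\gamma$ extends to a functor ${\bf ExtMetTop}\to{\bf CptExtMetTop}$ left adjoint to the inclusion, but for the proof of the lemma itself the direct argument above suffices.
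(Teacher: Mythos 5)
Your proposal is correct and coincides with the paper's own proof: both define \(\gamma\varphi\coloneqq(\iota_\Y\circ\varphi)_\gamma\) by applying the universal property of Theorem \ref{thm:compactif_emt} to the continuous-short map \(\iota_\Y\circ\varphi\) into the compact e.m.t.\ space \((\gamma\Y,\gamma\tau_\Y,\gamma\sfd_\Y)\), with existence and uniqueness both delivered by that universal property.
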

\begin{proof}
Note that \(\iota_\Y\circ\varphi\colon(\X,\tau_\X,\sfd_\X)\to(\gamma\Y,\gamma\tau_\Y,\gamma\sfd_\Y)\)
is a continuous-short map. Hence, Theorem \ref{thm:compactif_emt} ensures that
\(\gamma\varphi\coloneqq(\iota_\Y\circ\varphi)_\gamma\colon(\gamma\X,\gamma\tau_\X,\gamma\sfd_\X)\to(\gamma\Y,\gamma\tau_\Y,\gamma\sfd_\Y)\)
is the unique continuous-short map such that the diagram in \eqref{eq:diagram_cpt} commutes.
\end{proof}
\begin{definition}[The compactification functor]
We define the \emph{compactification functor}
\[
\gamma\colon{\bf ExtMetTop}\to{\bf CptExtMetTop}
\]
as follows: to any object \((\X,\tau,\sfd)\in{\rm Obj}({\bf ExtMetTop})\) we assign the object
\[
\gamma((\X,\tau,\sfd))\coloneqq(\gamma\X,\gamma\tau,\gamma\sfd)\in{\rm Obj}({\bf CptExtMetTop})
\]
given by Theorem \ref{thm:compactif_emt}, and to any morphism \(\varphi\colon(\X,\tau_\X,\sfd_\X)\to(\Y,\tau_\Y,\sfd_\Y)\)
in \({\bf ExtMetTop}\) we assign the morphism
\(\gamma(\varphi)\coloneqq\gamma\varphi\colon(\gamma\X,\gamma\tau_\X,\gamma\sfd_\X)\to(\gamma\Y,\gamma\tau_\Y,\gamma\sfd_\Y)\)
in \({\bf CptExtMetTop}\) given by Lemma \ref{lem:cpt_morph}.
\end{definition}
\begin{theorem}
The compactification functor \(\gamma\) is left adjoint to the inclusion functor
\[
\iota\colon{\bf CptExtMetTop}\hookrightarrow{\bf ExtMetTop}.
\]
In particular, \({\bf CptExtMetTop}\) is a reflective subcategory of \({\bf ExtMetTop}\), with reflector \(\gamma\).
Moreover, the assignment
\[
{\rm Obj}({\bf ExtMetTop})\ni(\X,\tau,\sfd)\mapsto\iota\circ\iota_{(\X,\tau,\sfd)}\colon(\X,\tau,\sfd)\to(\iota\circ\gamma)((\X,\tau,\sfd))
\]
yields a natural transformation from the identity functor \({\rm id}_{{\bf ExtMetTop}}\) to the functor \(\iota\circ\gamma\).
\end{theorem}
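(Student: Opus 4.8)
The plan is to follow the blueprint of the proof of Theorem~\ref{thm:emt-fication}, letting Theorem~\ref{thm:compactif_emt} --- and in particular its part~iii) --- play the r\^{o}le of Proposition~\ref{prop:emt-fication_constr}, and Lemma~\ref{lem:cpt_morph} play the r\^{o}le of Lemma~\ref{lem:emt-fication_morphisms}. The starting point is the observation that Theorem~\ref{thm:compactif_emt}~iii) already furnishes, for every e.m.t.\ space $(\X,\tau,\sfd)$ and every compact e.m.t.\ space $(\Y,\tau_\Y,\sfd_\Y)$, a bijection
\[
CS\big((\X,\tau,\sfd);(\Y,\tau_\Y,\sfd_\Y)\big)\ni\varphi\longmapsto\varphi_\gamma\in CS\big((\gamma\X,\gamma\tau,\gamma\sfd);(\Y,\tau_\Y,\sfd_\Y)\big),
\]
uniquely characterised by the identity $\varphi_\gamma\circ\iota_{(\X,\tau,\sfd)}=\varphi$; this is exactly the hom-set bijection one needs to exhibit the adjunction $\gamma\dashv\iota$, so the only thing left to do is to check its naturality in both variables.

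I would then carry out this naturality check, which is entirely formal. Given a morphism $g\colon(\Y,\tau_\Y,\sfd_\Y)\to(\Y',\tau_{\Y'},\sfd_{\Y'})$ of compact e.m.t.\ spaces, one must verify that $(g\circ\varphi)_\gamma=g\circ\varphi_\gamma$: both are continuous-short maps from $(\gamma\X,\gamma\tau,\gamma\sfd)$ to $(\Y',\tau_{\Y'},\sfd_{\Y'})$, and precomposing either one with $\iota_{(\X,\tau,\sfd)}$ yields $g\circ\varphi$ (using $\varphi_\gamma\circ\iota_{(\X,\tau,\sfd)}=\varphi$), so uniqueness in the universal property of Theorem~\ref{thm:compactif_emt} forces them to coincide. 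Symmetrically, for a morphism $h\colon(\X',\tau_{\X'},\sfd_{\X'})\to(\X,\tau,\sfd)$ of e.m.t.\ spaces one shows $(\varphi\circ h)_\gamma=\varphi_\gamma\circ\gamma h$ by precomposing with $\iota_{(\X',\tau_{\X'},\sfd_{\X'})}$ and invoking the commutative square~\eqref{eq:diagram_cpt} that defines $\gamma h$ in Lemma~\ref{lem:cpt_morph}, again together with $\varphi_\gamma\circ\iota_{(\X,\tau,\sfd)}=\varphi$ and uniqueness. This establishes $\gamma\dashv\iota$, hence exhibits ${\bf CptExtMetTop}$ as a reflective subcategory of ${\bf ExtMetTop}$ with reflector $\gamma$.

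Finally, I would identify the unit. Taking $(\Y,\tau_\Y,\sfd_\Y)=(\gamma\X,\gamma\tau,\gamma\sfd)$ in the bijection above and computing the preimage of $\mathrm{id}_{(\gamma\X,\gamma\tau,\gamma\sfd)}$, the characterising identity $\varphi_\gamma\circ\iota_{(\X,\tau,\sfd)}=\varphi$ shows that this preimage is $\iota_{(\X,\tau,\sfd)}$ itself, regarded as a continuous-short map $(\X,\tau,\sfd)\to(\iota\circ\gamma)((\X,\tau,\sfd))$; thus the unit of $\gamma\dashv\iota$ at $(\X,\tau,\sfd)$ is $\iota\circ\iota_{(\X,\tau,\sfd)}$, and the asserted naturality of the assignment $(\X,\tau,\sfd)\mapsto\iota\circ\iota_{(\X,\tau,\sfd)}$ is the standard fact that the unit of an adjunction is a natural transformation ${\rm id}_{{\bf ExtMetTop}}\Rightarrow\iota\circ\gamma$ --- equivalently, it is the commutativity of the square~\eqref{eq:diagram_cpt} of Lemma~\ref{lem:cpt_morph} read through the fully faithful inclusion $\iota$. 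I do not expect any genuine obstacle here, since all the substance has been packaged into Theorem~\ref{thm:compactif_emt} and Lemma~\ref{lem:cpt_morph}; the one point I would be careful about is that, in contrast with Theorem~\ref{thm:emt-fication} (where ${\sf emt}\circ\iota$ is literally the identity functor), here $\gamma\circ\iota$ is only naturally isomorphic to the identity --- the Stone--\v{C}ech construction underlying $\gamma$ need not return a given compact e.m.t.\ space unchanged --- which is precisely why it is cleanest to run the argument through the hom-set bijection and the universal property rather than by declaring the counit to be an identity morphism.
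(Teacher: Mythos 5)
Your proof is correct and rests on exactly the same two ingredients the paper invokes --- the bijection of Theorem~\ref{thm:compactif_emt}~iii) and the commutative square of Lemma~\ref{lem:cpt_morph} --- so it is essentially the paper's argument, merely phrased through the natural hom-set bijection rather than through the couniversal property of the counit used in the template proof of Theorem~\ref{thm:emt-fication}. Your closing caveat is well taken: since \(\gamma\circ\iota\) is only isomorphic, not equal, to the identity functor, the counit at a compact e.m.t.\ space \((\X,\tau,\sfd)\) must be taken to be \(({\rm id}_\X)_\gamma\colon\gamma\X\to\X\) rather than the identity morphism, a point that the paper's ``it follows along the lines of'' leaves implicit.
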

\begin{proof}
It follows along the lines of Theorem \ref{thm:emt-fication}. To show that \(\gamma\dashv\iota\), one uses
Theorem \ref{thm:compactif_emt} iii). The fact that \((\X,\tau,\sfd)\mapsto\iota\circ\iota_{(\X,\tau,\sfd)}\) is a natural transformation
follows from Lemma \ref{lem:cpt_morph}.
\end{proof}
\begin{definition}[The `generalised' compactification functor]
We define the functor \(\bar\gamma\) as
\[
\bar\gamma\coloneqq\gamma\circ{\sf emt}\colon{\bf PreExt\Psi MetTop}\to{\bf CptExtMetTop}.
\]
Moreover, given any pre-e.pm.t.\ space \((\X,\tau,\sfd)\), we define the continuous-short map
\(\bar\iota_{(\X,\tau,\sfd)}\) as
\[
\bar\iota_{(\X,\tau,\sfd)}\coloneqq\iota_{{\sf emt}((\X,\tau,\sfd))}\circ{\sf c}_{(\X,\tau,\sfd)}
\colon(\X,\tau,\sfd)\to\bar\gamma((\X,\tau,\sfd)).
\]
\end{definition}

It follows from Proposition \ref{prop:emt-fication_constr} and Theorem \ref{thm:compactif_emt} that the ensuing universal property holds:
given a compact e.m.t.\ space \((\Y,\tau_\Y,\sfd_\Y)\) and a continuous-short map \(\varphi\colon(\X,\tau,\sfd)\to(\Y,\tau_\Y,\sfd_\Y)\),
there exists a unique continuous-short map \(\varphi_{\bar\gamma}\colon\bar\gamma((\X,\tau,\sfd))\to(\Y,\tau_\Y,\sfd_\Y)\) such that
\[\begin{tikzcd}[column sep=huge]
(\X,\tau,\sfd) \arrow[r,"\bar\iota_{(\X,\tau,\sfd)}"] \arrow[dr,swap,"\varphi"] & \bar\gamma((\X,\tau,\sfd)) \arrow[d,"\varphi_{\bar\gamma}"] \\
& (\Y,\tau_\Y,\sfd_\Y)
\end{tikzcd}\]
is a commutative diagram.
\medskip

We point out that the proof of Theorem \ref{thm:compactif_emt} shows that the topological space \((\gamma\X,\gamma\tau)\)
is a quotient of the Stone--\v{C}ech compactification \((\beta\X,\beta\tau)\). More precisely, we have that
\(\gamma\X=\beta\X/\sim\), where the equivalence relation \(\sim\) is the following: given any \(x,y\in\beta\X\), we declare that
\begin{equation}\label{eq:gammaX_as_quotient_of_betaX}
x\sim y\quad\Longleftrightarrow\quad(\beta f)(x)=(\beta f)(y)\text{ for every }f\in\LIP_{b,1}(\X,\tau,\sfd).
\end{equation}
Consequently, a natural question arises: are \((\gamma\X,\gamma\tau)\) and \((\beta\X,\beta\tau)\) homeomorphic?
The next example shows that the answer is (in general) negative.
\begin{example}[where \(\beta\X\) and \(\gamma\X\) are not homeomorphic]\label{ex:betaX_vs_gammaX}{\rm
Let us consider the e.m.t.\ space \((\N,\tau,\sfd)\), where \(\tau\) is the discrete topology on \(\N\) and \(\sfd(n,m)\coloneqq\big|\frac{1}{n}-\frac{1}{m}\big|\)
for every \(n,m\in\N\). Note that \((\N,\sfd)\) is in fact a metric space and \(\tau\) is the topology induced by \(\sfd\). We claim that
\begin{equation}\label{eq:betaX_vs_gammaX_claim}
(\beta f)(\omega)=(\beta f)(\eta)\quad\text{ for every }\omega,\eta\in\beta\N\setminus i(\N)\text{ and }f\in\LIP_{b,1}(\N,\tau,\sfd).
\end{equation}
To prove it, we consider the representation of \(\beta\N\) as the space of ultrafilters on \(\N\) and we recall that
\((\beta f)(\omega)=\omega\text{-}\lim_n f(n)\). Since \((n)_n\) is a \(\sfd\)-Cauchy sequence, we have that \((f(n))_n\subseteq\R\)
is Cauchy whenever \(f\in\LIP_{b,1}(\N,\tau,\sfd)\), so that \(\lim_n f(n)\in\R\) exists and thus \(\omega\text{-}\lim_n f(n)=\lim_n f(n)\)
for every non-principal ultrafilter \(\omega\) on \(\N\). This proves the validity of the claim \eqref{eq:betaX_vs_gammaX_claim}.
Taking \eqref{eq:gammaX_as_quotient_of_betaX} into account, it is then easy to check that \((\gamma\X,\gamma\tau)\) is (homeomorphic to)
the one-point compactification of \((\N,\tau)\), thus in particular \(\gamma\N\) consists of countably many points. On the other hand,
it is well known that \(\beta\N\) is uncountable, thus \((\gamma\N,\gamma\tau)\) and \((\beta\N,\beta\tau)\) cannot be homeomorphic.
\fr}\end{example}

Next, we recall the definition of the \emph{Gelfand compactification} \((\hat\X,\hat\tau,\hat\sfd)\) of an e.m.t.\ space \((\X,\tau,\sfd)\)
introduced by Savar\'{e} in \cite[Section 2.1.7]{Sav:22}. Furthermore, we prove (cf.\ Proposition \ref{prop:consist_cpt}) that
\((\hat\X,\hat\tau,\hat\sfd)\) satisfies the universal property stated in Proposition \ref{prop:emt-fication_constr}, and thus accordingly
\((\hat\X,\hat\tau,\hat\sfd)\) coincides with \((\gamma\X,\gamma\tau,\gamma\sfd)\), up to a unique isomorphism. We remind the following definitions:
\begin{itemize}
\item \(\hat\X\) is the set of all \emph{characters} of the space \(\LIP_b(\X,\tau,\sfd)\) of bounded real-valued \(\tau\)-continuous \(\sfd\)-Lipschitz
functions, i.e.\ the set of all non-zero elements \(\theta\) of the dual of the normed space \((\LIP_b(\X,\tau,\sfd),\|\cdot\|_{C_b(\X,\tau)})\) satisfying
\(\theta(fg)=\theta(f)\theta(g)\) for every \(f,g\in\LIP_b(\X,\tau,\sfd)\).
\item The topology \(\hat\tau\) on \(\hat\X\) is defined as the restriction of the weak\(^*\) topology.
\item The extended distance \(\hat\sfd\) on \(\hat\X\) is defined as
\[
\hat\sfd(\theta,\sigma)\coloneqq\sup\big\{|\theta(f)-\sigma(f)|\;\big|\;f\in\LIP_{b,1}(\X,\tau,\sfd)\big\}\quad\text{ for every }\theta,\sigma\in\hat\X.
\]
\item The embedding map \(\hat\iota\colon\X\hookrightarrow\hat\X\) is given by
\[
\hat\iota(x)(f)\coloneqq f(x)\quad\text{ for every }x\in\X\text{ and }f\in\LIP_b(\X,\tau,\sfd).
\]
\end{itemize}
As it was proved in \cite[Theorem 2.1.34]{Sav:22}, the triple \((\hat\X,\hat\tau,\hat\sfd)\) is a compact e.m.t.\ space, \(\iota\colon\X\hookrightarrow\hat\X\)
is an embedding of e.m.t.\ spaces, and the image \(\iota(\X)\) is weakly\(^*\) dense in \(\hat\X\).
\begin{proposition}[Consistency with {\cite[Theorem 2.1.34]{Sav:22}}]\label{prop:consist_cpt}
Let \((\X,\tau,\sfd)\) be an e.m.t.\ space. Then there exists a unique isomorphism
of e.m.t.\ spaces \(\Phi\colon(\gamma\X,\gamma\tau,\gamma\sfd)\to(\hat\X,\hat\tau,\hat\sfd)\) such that
\[\begin{tikzcd}
\X  \arrow[r,hook,"\iota"] \arrow[dr,hook,swap,"\hat\iota"] & \gamma\X \arrow[d,"\Phi"] \\
& \hat\X
\end{tikzcd}\]
is a commutative diagram.
\end{proposition}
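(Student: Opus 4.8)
The plan is to identify $(\hat\X,\hat\tau,\hat\sfd)$ with the concrete model of the compactification built in the proof of Theorem \ref{thm:compactif_emt}. Recall from \eqref{eq:gammaX_as_quotient_of_betaX} and Remarks \ref{rmk:emt_recover}, \ref{rmk:emt_cpt} that $\gamma\X=\beta\X/\!\sim$, that $\gamma\tau$ is the quotient topology of $\beta\tau$ under the canonical projection ${\sf c}={\sf c}_{(\beta\X,\beta\tau,\beta\sfd)}\colon\beta\X\to\gamma\X$, and that $\gamma\sfd({\sf c}(x),{\sf c}(y))=(\beta\sfd)(x,y)=\sup_{f\in\LIP_{b,1}(\X,\tau,\sfd)}|f_\beta(x)-f_\beta(y)|$. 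First I would introduce the map
\[
j\colon\beta\X\to\hat\X,\qquad j(x)(f)\coloneqq f_\beta(x)\quad\text{ for }x\in\beta\X,\ f\in\LIP_b(\X,\tau,\sfd),
\]
and check it is well defined: since $f\mapsto f_\beta$ is a unital algebra homomorphism from $C_b(\X,\tau)$ to $C(\beta\X,\beta\tau)$, the functional $j(x)$ is linear and multiplicative on the unital subalgebra $\LIP_b(\X,\tau,\sfd)$, it is nonzero because $1_\beta\equiv 1$, and $|j(x)(f)|=|f_\beta(x)|\leq\|f\|_{C_b(\X,\tau)}$, so $j(x)\in\hat\X$. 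Moreover $j$ is continuous from $(\beta\X,\beta\tau)$ to $(\hat\X,\hat\tau)$, since for each fixed $f$ the map $x\mapsto j(x)(f)=f_\beta(x)$ is $\beta\tau$-continuous while $\hat\tau$ is the restriction of the weak$^*$ topology.

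Next I would show that $j$ descends to an isomorphism of e.m.t.\ spaces. Writing any $h\in\LIP_b(\X,\tau,\sfd)$ as $h=c\,f$ with $c\in\R$ a Lipschitz constant of $h$ and $f\in\LIP_{b,1}(\X,\tau,\sfd)$, one sees that $j(x)=j(y)$ if and only if $f_\beta(x)=f_\beta(y)$ for all $f\in\LIP_{b,1}(\X,\tau,\sfd)$, i.e.\ if and only if $x\sim y$. Hence $j$ factors as $j=\bar j\circ{\sf c}$ for a unique injective map $\bar j\colon\gamma\X\to\hat\X$, which is continuous because $\gamma\tau$ is the quotient topology. For surjectivity, note that $\iota={\sf c}\circ i$ gives $\bar j(\iota(x))(f)=f_\beta(i(x))=f(x)=\hat\iota(x)(f)$, so that $\bar j(\iota(\X))=\hat\iota(\X)$, which is weak$^*$ dense in $\hat\X$ by \cite[Theorem 2.1.34]{Sav:22}; on the other hand $\bar j(\gamma\X)$ is compact, hence $\hat\tau$-closed ($\hat\X$ being Hausdorff), so $\bar j(\gamma\X)=\hat\X$. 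A continuous bijection from a compact space onto a Hausdorff space is a homeomorphism, and for $x,y\in\beta\X$
\[
\gamma\sfd({\sf c}(x),{\sf c}(y))=\sup_{f\in\LIP_{b,1}(\X,\tau,\sfd)}\big|j(x)(f)-j(y)(f)\big|=\hat\sfd\big(\bar j({\sf c}(x)),\bar j({\sf c}(y))\big)
\]
by the definition of $\hat\sfd$, so $\bar j$ is distance preserving. By Proposition \ref{prop:basic_ExtMetTop} ii), $\bar j$ is an isomorphism of e.m.t.\ spaces.

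Finally I would set $\Phi\coloneqq\bar j$; the identity $\bar j\circ\iota=\hat\iota$ established above is exactly the commutativity of the required triangle. For uniqueness, any morphism $\Phi'\colon\gamma\X\to\hat\X$ with $\Phi'\circ\iota=\hat\iota$ agrees with $\Phi$ on $\iota(\X)$, which is $\gamma\tau$-dense in $\gamma\X$ by Theorem \ref{thm:compactif_emt} i); since $(\hat\X,\hat\tau)$ is Hausdorff, $\Phi'=\Phi$. I expect the only genuinely non-formal point to be the surjectivity of $\bar j$, which relies on the compactness of $\gamma\X$, the Hausdorffness of $\hat\X$, and the weak$^*$-density of $\hat\iota(\X)$ from \cite{Sav:22}; the remaining verifications amount to unwinding the construction of $(\gamma\X,\gamma\tau,\gamma\sfd)$ in the proof of Theorem \ref{thm:compactif_emt}. (Alternatively, one could verify that $(\hat\X,\hat\iota)$ satisfies the universal property of Theorem \ref{thm:compactif_emt}, using functoriality of the Gelfand construction on $\LIP_b$ together with the fact that the Gelfand embedding of a compact e.m.t.\ space is an isomorphism, and then appeal to uniqueness of representing objects; but the direct identification above seems shorter.)
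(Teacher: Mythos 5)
Your proof is correct, but it takes a genuinely different route from the paper's. The paper proves the proposition abstractly: it verifies that the pair \((\hat\X,\hat\iota)\) satisfies the universal property of the compactification (Theorem \ref{thm:compactif_emt}) by constructing, for an arbitrary compact e.m.t.\ target \((\Y,\tau_\Y,\sfd_\Y)\) and continuous-short \(\varphi\), the factorisation \(\hat\varphi\colon\hat\X\to\Y\) via a net/subnet compactness argument, and then invokes uniqueness of objects representing a universal property. You instead build a concrete comparison map \(j(x)(f)=f_\beta(x)\) on the Stone--\v{C}ech model, show it descends through the identification \(\gamma\X=\beta\X/\!\sim\) of \eqref{eq:gammaX_as_quotient_of_betaX}, and check directly that the induced map is a distance-preserving homeomorphism. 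Your argument is shorter and avoids the delicate verification that \(\hat\varphi\) is well defined, continuous and short, but it leans on the internal construction of \((\gamma\X,\gamma\tau,\gamma\sfd)\) from the proof of Theorem \ref{thm:compactif_emt} (in particular that \(\gamma\tau\) is the quotient topology of \(\beta\tau\) and that \(\gamma\sfd\circ({\sf c}\times{\sf c})=\beta\sfd\), which hold by Remarks \ref{rmk:emt_recover} and \ref{rmk:emt_cpt} since \(\beta\sfd\) can be \(\beta\tau\)-recovered), whereas the paper's argument is model-independent and yields as a by-product that \((\hat\X,\hat\iota)\) itself enjoys the universal property. All the steps you flag as routine do go through: \(f\mapsto f_\beta\) is a unital algebra homomorphism (both sides of \((fg)_\beta=f_\beta g_\beta\) agree on the dense set \(i(\X)\)), so each \(j(x)\) is indeed a character; the kernel of \(j\) is exactly \(\sim\) because \(\LIP_b\) is the union of the dilates of \(\LIP_{b,1}\); and surjectivity follows from compactness of \(\bar j(\gamma\X)\), Hausdorffness of the weak\(^*\) topology, and the density of \(\hat\iota(\X)\) quoted from \cite[Theorem 2.1.34]{Sav:22}.
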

\begin{proof}
It suffices to check that \(((\hat\X,\hat\tau,\hat\sfd),\hat\iota)\) verifies the universal property stated in Theorem \ref{prop:emt-fication_constr}.
Fix a compact e.m.t.\ space \((\Y,\tau_\Y,\sfd_\Y)\) and a continuous-short map \(\varphi\colon(\X,\tau,\sfd)\to(\Y,\tau_\Y,\sfd_\Y)\).
Note that \(g\circ\varphi\in\LIP_{b,1}(\X,\tau,\sfd)\) for every \(g\in\LIP_{b,1}(\Y,\tau_\Y,\sfd_\Y)\). We claim that there exists a unique map \(\hat\varphi\colon\hat\X\to\Y\) such that
\begin{equation}\label{eq:char_hat_phi}
g(\hat\varphi(\hat x))=\hat x(g\circ\varphi)\quad\text{ for every }g\in\LIP_{b,1}(\Y,\tau_\Y,\sfd_\Y).
\end{equation}
Since \(\LIP_{b,1}(\Y,\tau_\Y,\sfd_\Y)\) separates the points of \(\Y\), it is clear that if such a map \(\hat\varphi\) exists, then it is unique.
To prove existence, fix \(\hat x\in\hat\X\) and take a net \((x_i)_{i\in I}\subseteq\X\) such that \(\hat\iota(x_i)\) weakly\(^*\) converges to \(\hat x\).
Since \((\Y,\tau_\Y)\) is compact and Hausdorff, up to passing to a non-relabelled subnet we can also assume that \(\lim_{i\in I}\varphi(x_i)=y_{\hat x}\)
for some \(y_{\hat x}\in\Y\), where the limit is taken with respect to the topology \(\tau_\Y\). Therefore, we can compute
\[
\hat x(g\circ\varphi)=\lim_{i\in I}\hat\iota(x_i)(g\circ\varphi)=\lim_{i\in I}g(\varphi(x_i))=g(y_{\hat x}),
\]
thus by letting \(\hat\varphi(\hat x)\coloneqq y_{\hat x}\) for every \(\hat x\in\hat\X\) we obtain the sought map \(\hat\varphi\) satisfying \eqref{eq:char_hat_phi}.
Next, let us check that \(\hat\varphi\colon(\hat\X,\hat\tau,\hat\sfd)\to(\Y,\tau_\Y,\sfd_\Y)\) is a continuous-short map. To prove continuity, fix a
weakly\(^*\) converging net \(x_i\overset{*}{\rightharpoonup}x\) in \(\hat\X\). Then we have that
\[
(g\circ\hat\varphi)(\hat x)=\hat x(g\circ\varphi)=\lim_{i\in I}\hat x_i(g\circ\varphi)=\lim_{i\in I}(g\circ\hat\varphi)(\hat x_i)\quad\text{ for every }g\in\LIP_{b,1}(\Y,\tau_\Y,\sfd_\Y).
\]
Since \(\tau_\Y\) is the initial topology of \(\LIP_{b,1}(\Y,\tau_\Y,\sfd_\Y)\), we deduce that \(\hat\varphi(\hat x)=\lim_{i\in I}\hat\varphi(\hat x_i)\) in
\((\Y,\tau_\Y)\), thus proving the continuity of \(\hat\varphi\colon(\hat\X,\hat\tau)\to(\Y,\tau_\Y)\). Moreover, for any \(\hat x,\hat y\in\hat\X\) we have that
\[\begin{split}
\sfd_\Y(\hat\varphi(\hat x),\hat\varphi(\hat y))&=\sup_{g\in\LIP_{b,1}(\Y,\tau_\Y,\sfd_\Y)}\big|g(\hat\varphi(\hat x))-g(\hat\varphi(\hat y))\big|
=\sup_{g\in\LIP_{b,1}(\Y,\tau_\Y,\sfd_\Y)}\big|\hat x(g\circ\varphi)-\hat y(g\circ\varphi)\big|\\
&\leq\sup_{f\in\LIP_{b,1}(\X,\tau,\sfd)}|\hat x(f)-\hat y(f)|=\hat\sfd(\hat x,\hat y),
\end{split}\]
which shows that \(\hat\varphi\colon(\hat\X,\hat\sfd)\to(\Y,\sfd_\Y)\) is a short map. Finally, we claim that \(\hat\varphi\colon\hat\X\to\Y\) is the only
continuous-short map such that \(\hat\varphi\circ\hat\iota=\varphi\). For any \(x\in\X\), we have that
\[
g((\hat\varphi\circ\hat\iota)(x))=\hat\iota(x)(g\circ\varphi)=g(\varphi(x))\quad\text{ for every }g\in\LIP_{b,1}(\Y,\tau_\Y,\sfd_\Y),
\]
so that \((\hat\varphi\circ\hat\iota)(x)=\varphi(x)\) thanks to the fact that \(\LIP_{b,1}(\Y,\tau_\Y,\sfd_\Y)\) separates the points of \(\Y\).
This shows that \(\hat\varphi\circ\hat\iota=\varphi\). Given any continuous-short map \(\psi\colon(\hat\X,\hat\tau,\hat\sfd)\to(\Y,\tau_\Y,\sfd_\Y)\)
such that \(\psi\circ\hat\iota=\varphi\), we have that \(\psi\) and \(\hat\varphi\) agree on \(\hat\iota(\X)\). Since \(\psi\), \(\hat\varphi\)
are continuous and \(\hat\iota(\X)\) is dense in \((\hat\X,\hat\tau)\), we deduce that \(\psi=\hat\varphi\), showing uniqueness of \(\hat\varphi\).
All in all, the statement is achieved.
\end{proof}
\subsection{\texorpdfstring{\({\bf Tych}\)}{Tych} and \texorpdfstring{\({\bf ExtMet}\)}{ExtMet} are coreflective
subcategories of \texorpdfstring{\({\bf ExtMetTop}\)}{ExtMetTop}}\label{s:Tych_ExtMet}\label{s:Tych_and_ExtMet}
In this section, we show that \({\bf ExtMetTop}\) encompasses both the category \({\bf Tych}\) of Tychonoff spaces
and the category \({\bf ExtMet}\) of extended metric spaces, corresponding to `purely-topological' and `purely-metric'
e.m.t.\ spaces, respectively, in a sense that we shall make precise.
\subsubsection*{\texorpdfstring{\({\bf Tych}\)}{Tych} is a coreflective subcategory of \texorpdfstring{\({\bf ExtMetTop}\)}{ExtMetTop}}
For clarity of presentation, it is convenient to use the following unified notation: given any \(\lambda\in(0,\infty]\), we denote
\[
{\bf ExtMetTop}_{\leq\lambda}\coloneqq\left\{\begin{array}{ll}
{\bf MetTop}_{\leq\lambda}\\
{\bf ExtMetTop}
\end{array}\quad\begin{array}{ll}
\text{ if }\lambda<\infty,\\
\text{ if }\lambda=\infty.
\end{array}\right.
\]
For any set \(\X\), we define the \emph{\(\lambda\)-discrete distance} \(\sfd_{\lambda\text{-discr}}^\X\) on \(\X\) as
\[
\sfd_{\lambda\text{-discr}}^\X(x,y)\coloneqq\left\{\begin{array}{ll}
0\\
\lambda
\end{array}\quad\begin{array}{ll}
\text{ for every }(x,y)\in\X\times\X\text{ such that }x=y,\\
\text{ for every }(x,y)\in\X\times\X\text{ such that }x\neq y.
\end{array}\right.
\]
It can be readily checked that \((\X,\tau,\sfd_{\lambda\text{-discr}}^\X)\) is an e.m.t.\ space for
every Tychonoff space \((\X,\tau)\). Moreover, if \(\varphi\colon(\X,\tau_\X)\to(\Y,\tau_\Y)\) is a
continuous map between Tychonoff spaces, then we have that
\(\varphi\colon(\X,\tau_\X,\sfd_{\lambda\text{-discr}}^\X)\to(\Y,\tau_\Y,\sfd_{\lambda\text{-discr}}^\Y)\)
is a continuous-short map. In view of these observations, it makes sense to define the
\emph{\(\lambda\)-discretisation functor} \(\mathfrak D_\lambda\colon{\bf Tych}\to{\bf ExtMetTop}_{\leq\lambda}\) as
\begin{align*}
\mathfrak D_\lambda((\X,\tau))\coloneqq(\X,\tau,\sfd_{\lambda\text{-discr}}^\X)&\quad
\text{ for every object }(\X,\tau)\text{ in }{\bf Tych},\\
\mathfrak D_\lambda(\varphi)\coloneqq\varphi\colon\mathfrak D_\lambda((\X,\tau_\X))\to\mathfrak D_\lambda((\Y,\tau_\Y))&\quad
\text{ for every morphism }\varphi\colon(\X,\tau_\X)\to(\Y,\tau_\Y)
\text{ in }{\bf Tych}.
\end{align*}
\begin{theorem}\label{thm:Tych_in_ExtMetTop}
Let \(\lambda\in(0,\infty]\) be given. Then the functor \(\mathfrak D_\lambda\colon{\bf Tych}\to{\bf ExtMetTop}_{\leq\lambda}\) is fully faithful
and injective on objects. Moreover, \(\mathfrak D_\lambda\) is the left adjoint to the forgetful functor
\[
U_\lambda\colon{\bf ExtMetTop}_{\leq\lambda}\to{\bf Tych}.
\]
In particular, \(\mathfrak D_\lambda\) exhibits \({\bf Tych}\) as a coreflective subcategory of \({\bf ExtMetTop}_{\leq\lambda}\).
\end{theorem}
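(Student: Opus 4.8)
The plan is to reduce the entire statement to one elementary remark: over ${\bf ExtMetTop}_{\leq\lambda}$ the shortness half of ``continuous-short'' is \emph{automatic} for a morphism whose source is $\lambda$-discrete. Recall first that $U_\lambda$ sends an object $(\X,\tau,\sfd)$ of ${\bf ExtMetTop}_{\leq\lambda}$ to $(\X,\tau)$, which is genuinely a Tychonoff space by the discussion following Definition \ref{def:emt}, so that $U_\lambda$ is a well-defined functor. Now I would record the following: for any Tychonoff space $(\X,\tau)$ and any object $(\Y,\tau_\Y,\sfd_\Y)$ of ${\bf ExtMetTop}_{\leq\lambda}$, a map $\varphi\colon\X\to\Y$ belongs to $CS((\X,\tau,\sfd_{\lambda\text{-discr}}^\X);(\Y,\tau_\Y,\sfd_\Y))$ if and only if $\varphi\colon(\X,\tau)\to(\Y,\tau_\Y)$ is continuous. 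Indeed, continuity is part of both conditions; and every such $\varphi$ is non-expansive for $\sfd_{\lambda\text{-discr}}^\X$ and $\sfd_\Y$, since when $x=y$ both sides of $\sfd_\Y(\varphi(x),\varphi(y))\leq\sfd_{\lambda\text{-discr}}^\X(x,y)$ vanish, while when $x\neq y$ we have $\sfd_\Y(\varphi(x),\varphi(y))\leq\lambda=\sfd_{\lambda\text{-discr}}^\X(x,y)$, using $\sfd_\Y\leq\lambda$ if $\lambda<\infty$ and the trivial bound if $\lambda=\infty$. In other words, on the level of underlying maps one has the equality
$$CS\big((\X,\tau,\sfd_{\lambda\text{-discr}}^\X);(\Y,\tau_\Y,\sfd_\Y)\big)=C\big((\X,\tau);(\Y,\tau_\Y)\big).$$

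Granting this, the rest is bookkeeping. Applying the equality with $(\Y,\tau_\Y,\sfd_\Y)=\mathfrak D_\lambda((\Y,\tau_\Y))$ — an object of ${\bf ExtMetTop}_{\leq\lambda}$, as $\mathfrak D_\lambda$ takes values there — and using that $\mathfrak D_\lambda$ leaves underlying maps unchanged, I obtain that $\mathfrak D_\lambda$ induces a bijection $C((\X,\tau_\X);(\Y,\tau_\Y))\to{\rm Hom}_{{\bf ExtMetTop}_{\leq\lambda}}(\mathfrak D_\lambda((\X,\tau_\X)),\mathfrak D_\lambda((\Y,\tau_\Y)))$, that is, $\mathfrak D_\lambda$ is fully faithful; injectivity on objects is immediate, since $(\X,\tau)$ is read off from $\mathfrak D_\lambda((\X,\tau))=(\X,\tau,\sfd_{\lambda\text{-discr}}^\X)$. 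For the adjunction, the same equality reads
$${\rm Hom}_{{\bf ExtMetTop}_{\leq\lambda}}\big(\mathfrak D_\lambda((\X,\tau)),(\Y,\tau_\Y,\sfd_\Y)\big)={\rm Hom}_{{\bf Tych}}\big((\X,\tau),U_\lambda((\Y,\tau_\Y,\sfd_\Y))\big),$$
an identification that does nothing to underlying maps, hence is natural in both arguments; therefore $\mathfrak D_\lambda\dashv U_\lambda$. If one prefers an explicit unit and counit, I would take $\eta_{(\X,\tau)}\coloneqq{\rm id}_\X\colon(\X,\tau)\to(U_\lambda\circ\mathfrak D_\lambda)((\X,\tau))=(\X,\tau)$ and $\varepsilon_{(\Y,\tau_\Y,\sfd_\Y)}\coloneqq{\rm id}_\Y\colon(\mathfrak D_\lambda\circ U_\lambda)((\Y,\tau_\Y,\sfd_\Y))=(\Y,\tau_\Y,\sfd_{\lambda\text{-discr}}^\Y)\to(\Y,\tau_\Y,\sfd_\Y)$ — the latter being continuous-short exactly by $\sfd_\Y\leq\lambda$ off the diagonal — and verify the two triangle identities, which hold trivially since every map occurring is an identity on underlying sets.

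Finally, being fully faithful and injective on objects, $\mathfrak D_\lambda$ identifies ${\bf Tych}$ with a full subcategory of ${\bf ExtMetTop}_{\leq\lambda}$ whose inclusion functor is naturally isomorphic to $\mathfrak D_\lambda$, and hence admits $U_\lambda$ as a right adjoint; this is precisely the assertion that $\mathfrak D_\lambda$ exhibits ${\bf Tych}$ as a coreflective subcategory of ${\bf ExtMetTop}_{\leq\lambda}$. I do not foresee any real obstacle: all the content is the vacuity of the shortness constraint on morphisms out of $\lambda$-discrete spaces, and the only point deserving a moment's attention is treating $\lambda<\infty$ and $\lambda=\infty$ uniformly — which is painless precisely because $\sfd_{\lambda\text{-discr}}^\X$ takes only the values $0$ and $\lambda$.
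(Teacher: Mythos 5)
Your proposal is correct and follows essentially the same route as the paper: both hinge on the single observation that non-expansiveness is automatic for a continuous map out of a $\lambda$-discrete space into an object of ${\bf ExtMetTop}_{\leq\lambda}$ (using $\sfd_\Y\leq\lambda$ when $\lambda<\infty$ and triviality when $\lambda=\infty$), with the counit ${\rm id}_\X\colon(\X,\tau,\sfd_{\lambda\text{-discr}}^\X)\to(\X,\tau,\sfd)$ witnessing the adjunction. The only cosmetic difference is that you package the argument as a natural hom-set identification, whereas the paper verifies the counit's universal property directly per its Appendix~\ref{app:categories} definition of adjoint.
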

\begin{proof}
Trivially, \(\mathfrak D_\lambda\) is injective on objects. The fact that \(\mathfrak D_\lambda\) is fully faithful readily follows from the
observation that any continuous map \(\varphi\colon(\X,\tau_\X)\to(\Y,\tau_\Y)\) between Tychonoff spaces gives a continuous-short map
\(\varphi\colon(\X,\tau_\X,\sfd_{\lambda\text{-discr}}^\X)\to(\Y,\tau_\Y,\sfd_{\lambda\text{-discr}}^\Y)\). Let us now check that \(\mathfrak D_\lambda\dashv U_\lambda\).
To this aim, fix \((\X,\tau,\sfd)\in{\rm Obj}({\bf ExtMetTop}_{\leq\lambda})\). Note that the identity \({\rm id}_\X\colon(\X,\sfd_{\lambda\text{-discr}}^\X)\to(\X,\sfd)\)
is a short map, thus \(\varepsilon_{(\X,\tau,\sfd)}\coloneqq{\rm id}_\X\colon(\X,\tau,\sfd_{\lambda\text{-discr}}^\X)\to(\X,\tau,\sfd)\)
is a morphism in \({\bf ExtMetTop}_{\leq\lambda}\). Next, fix an object \((\Y,\tau_\Y)\in{\rm Obj}({\bf Tych})\) and a morphism
\(\varphi\colon\mathfrak D_\lambda((\Y,\tau_\Y))=(\Y,\tau_\Y,\sfd_{\lambda\text{-discr}}^\Y)\to(\X,\tau,\sfd)\)
in \({\bf ExtMetTop}_{\leq\lambda}\). Clearly, the unique map \(\psi\colon\Y\to\X\) such that \(\varepsilon_{(\X,\tau,\sfd)}\circ\psi=\varphi\) is given
by \(\psi_\varphi\coloneqq\varphi\). Since \(\psi_\varphi\colon(\Y,\tau_\Y)\to U_\lambda((\X,\tau,\sfd))=(\X,\tau)\) is continuous and
\(\mathfrak D_\lambda(\psi_\varphi)=\psi_\varphi\), we have that \(\psi_\varphi\colon(\Y,\tau_\Y)\to(\X,\tau)\) is the unique morphism in
\({\bf Tych}\) such that \(\varepsilon_{(\X,\tau,\sfd)}\circ\mathfrak D_\lambda(\psi_\varphi)=\varphi\).
\end{proof}
\begin{remark}{\rm
Given any \(\lambda,\sigma\in(0,\infty]\) with \(\lambda\leq\sigma\), we obtain a functor
\[
\mathfrak D_{\lambda,\sigma}\colon{\bf Tych}\to{\bf ExtMetTop}_{\leq\sigma},
\]
composing \(\mathfrak D_\lambda\colon{\bf Tych}\to{\bf ExtMetTop}_{\leq\lambda}\) with the inclusion \({\bf ExtMetTop}_{\leq\lambda}\hookrightarrow{\bf ExtMetTop}_{\leq\sigma}\).
Note that \(\mathfrak D_{\lambda,\lambda}=\mathfrak D_\lambda\). It is easy to check that \(\mathfrak D_{\lambda,\sigma}\) is fully faithful and injective on objects, but
\(\mathfrak D_{\lambda,\sigma}\) is left adjoint to the forgetful functor \({\bf ExtMetTop}_{\leq\sigma}\to{\bf Tych}\) if and only if \(\lambda=\sigma\). In particular,
to exhibit \({\bf Tych}\) as a coreflective subcategory of \({\bf ExtMetTop}\) it is necessary to consider the \(\infty\)-discretisation functor \(\mathfrak D_\infty\).
\fr}\end{remark}
\subsubsection*{\texorpdfstring{\({\bf ExtMet}\)}{ExtMet} is a coreflective subcategory of \texorpdfstring{\({\bf ExtMetTop}\)}{ExtMetTop}}
Given an extended metric space \((\X,\sfd)\), we denote by \(\tau_\sfd\) the topology on \(\X\) that is induced by \(\sfd\). Note that
\((\X,\tau_\sfd,\sfd)\) is an e.m.t.\ space by Remark \ref{rmk:initial_top_Lip_metric_space}. Moreover, if \(\varphi\colon(\X,\sfd_\X)\to(\Y,\sfd_\Y)\) is a short map between extended metric spaces,
then it is clear that \(\varphi\colon(\X,\tau_{\sfd_\X},\sfd_\X)\to(\Y,\tau_{\sfd_\Y},\sfd_\Y)\) is a continuous-short map. Thus, it makes sense to
define the functor \({\rm T}\colon{\bf ExtMet}\to{\bf ExtMetTop}\) as
\begin{align*}
{\rm T}((\X,\sfd))\coloneqq(\X,\tau_\sfd,\sfd)&\quad\text{ for every object }(\X,\sfd)\text{ in }{\bf ExtMet},\\
{\rm T}(\varphi)\coloneqq\varphi\colon{\rm T}((\X,\sfd_\X))\to{\rm T}((\Y,\sfd_\Y))&\quad\text{ for every morphism }\varphi\colon(\X,\sfd_\X)\to(\Y,\sfd_\Y)
\text{ in }{\bf ExtMet}.
\end{align*}

By arguing in a similar manner as we did for Theorem \ref{thm:Tych_in_ExtMetTop}, and using the fact that for any e.m.t.\ space \((\X,\tau,\sfd)\) the topology \(\tau\)
is coarser that \(\tau_\sfd\), one can readily obtain the following result:
\begin{theorem}
The functor \({\rm T}\colon{\bf ExtMet}\to{\bf ExtMetTop}\) is fully faithful and injective on objects. Moreover, \({\rm T}\) is the left adjoint to the
forgetful functor \({\bf ExtMetTop}\to{\bf ExtMet}\). In particular, \({\rm T}\) exhibits \({\bf ExtMet}\) as a coreflective subcategory of \({\bf ExtMetTop}\).
\end{theorem}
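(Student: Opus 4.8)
The plan is to mimic, nearly verbatim, the proof of Theorem \ref{thm:Tych_in_ExtMetTop}, with the pair (induced topology, extended distance) playing the role previously played by the $\lambda$-discrete distance. First I would record that ${\rm T}$ is well posed: every short map between extended metric spaces is automatically continuous for the induced topologies, so ${\rm T}$ does send morphisms of ${\bf ExtMet}$ to morphisms of ${\bf ExtMetTop}$, and functoriality is obvious since ${\rm T}$ acts as the identity on underlying sets and maps. Injectivity on objects is then immediate, because $(\X,\sfd)$ can be read off from ${\rm T}((\X,\sfd))=(\X,\tau_\sfd,\sfd)$. For full faithfulness, faithfulness holds because ${\rm T}(\varphi_1)={\rm T}(\varphi_2)$ forces equality of the underlying maps of sets, hence $\varphi_1=\varphi_2$; and fullness holds because any continuous-short map $\psi\colon(\X,\tau_{\sfd_\X},\sfd_\X)\to(\Y,\tau_{\sfd_\Y},\sfd_\Y)$ is in particular a short map $(\X,\sfd_\X)\to(\Y,\sfd_\Y)$, so $\psi={\rm T}(\psi)$.

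Next I would set up the adjunction ${\rm T}\dashv U$, where $U\colon{\bf ExtMetTop}\to{\bf ExtMet}$ is the forgetful functor $(\X,\tau,\sfd)\mapsto(\X,\sfd)$ (well defined since, in an e.m.t.\ space, $\sfd$ is by definition an \emph{extended distance}, not merely a pseudodistance, and since continuous-short maps are in particular short). Fix an e.m.t.\ space $(\X,\tau,\sfd)$. The only non-formal ingredient, and the point highlighted in the remark preceding the statement, is the inclusion $\tau\subseteq\tau_\sfd$: indeed each $f\in\LIP_{b,1}(\X,\tau,\sfd)$ is $1$-Lipschitz for $\sfd$, hence $\tau_\sfd$-continuous, and $\tau$ is the coarsest topology making all such $f$ continuous. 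Therefore ${\rm id}_\X\colon(\X,\tau_\sfd)\to(\X,\tau)$ is continuous, and it is trivially distance preserving, so $\varepsilon_{(\X,\tau,\sfd)}\coloneqq{\rm id}_\X\colon({\rm T}\circ U)((\X,\tau,\sfd))=(\X,\tau_\sfd,\sfd)\to(\X,\tau,\sfd)$ is a morphism in ${\bf ExtMetTop}$.

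It remains to check the universal property of $\varepsilon_{(\X,\tau,\sfd)}$. Given an extended metric space $(\Y,\sfd_\Y)$ and a morphism $\varphi\colon{\rm T}((\Y,\sfd_\Y))=(\Y,\tau_{\sfd_\Y},\sfd_\Y)\to(\X,\tau,\sfd)$ in ${\bf ExtMetTop}$, the fact that $\varepsilon_{(\X,\tau,\sfd)}$ is the identity on underlying sets forces the only candidate for a morphism $\psi\colon(\Y,\sfd_\Y)\to U((\X,\tau,\sfd))=(\X,\sfd)$ with $\varepsilon_{(\X,\tau,\sfd)}\circ{\rm T}(\psi)=\varphi$ to be $\psi_\varphi\coloneqq\varphi$ (as a map of sets); this $\psi_\varphi$ is short because $\varphi$ is short, and the composite $\varepsilon_{(\X,\tau,\sfd)}\circ{\rm T}(\psi_\varphi)$ is continuous-short (being a composite of continuous-short maps) and agrees with $\varphi$ on underlying sets, hence equals $\varphi$. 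This proves ${\rm T}\dashv U$, and since ${\rm T}$ is fully faithful and injective on objects, ${\bf ExtMet}$ is exhibited as a coreflective subcategory of ${\bf ExtMetTop}$.

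I do not anticipate a genuine obstacle here: the whole argument is a routine transcription of Theorem \ref{thm:Tych_in_ExtMetTop}, and the only substantive step, the inclusion $\tau\subseteq\tau_\sfd$, is itself an immediate consequence of the definition of an e.m.t.\ space. The one place to be slightly careful is in confirming that $U$ really lands in ${\bf ExtMet}$ (rather than only in ${\bf Ext\Psi Met}$), which is part of the definition of e.m.t.\ space.
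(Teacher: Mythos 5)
Your proposal is correct and follows exactly the route the paper intends: the paper omits the proof, stating only that it proceeds "in a similar manner as Theorem \ref{thm:Tych_in_ExtMetTop}" using the fact that \(\tau\) is coarser than \(\tau_\sfd\), which is precisely the one substantive ingredient you isolate and justify. Nothing to add.
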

\begin{remark}{\rm
The following more general statement holds. Fix a full subcategory \({\bf C}\) of \({\bf ExtMet}\). We denote by \({\bf C}_{\rm +top}\) the full subcategory of
\({\bf ExtMetTop}\) whose objects are given by those e.m.t.\ spaces \((\X,\tau,\sfd)\) such that \((\X,\sfd)\in{\rm Obj}({\bf C})\). Then \({\rm T}\colon{\bf ExtMet}\to{\bf ExtMetTop}\)
restricts to a functor \({\rm T}_{\bf C}\colon{\bf C}\to{\bf C}_{\rm +top}\) that is fully faithful, injective on objects, and left adjoint to the forgetful
functor \({\bf C}_{\rm +top}\to{\bf C}\). In particular, \({\rm T}_{\bf C}\) exhibits \({\bf C}\) as a coreflective subcategory of \({\bf C}_{\rm +top}\).
\fr}\end{remark}
\subsection{Truncation, metric-completion and geodesification functors}
Here, we discuss functors that are particularly useful when addressing problems in geometric analysis on e.m.t.\ spaces.
\subsubsection*{The \texorpdfstring{\(\lambda\)}{lambda}-truncation functor}
Let \(\lambda\in(0,\infty)\) be a fixed real number. Given an e.m.t.\ space \((\X,\tau,\sfd)\), it is straightforward to check that \((\X,\tau,\sfd\wedge\lambda)\)
is an e.m.t.\ space. Also, if \(\varphi\colon(\X,\tau_\X,\sfd_\X)\to(\Y,\tau_\Y,\sfd_\Y)\) is a continuous-short map between e.m.t.\ spaces, then it is clear that
\(\varphi\colon(\X,\sfd_\X\wedge\lambda)\to(\Y,\sfd_\Y\wedge\lambda)\) is a short map, so that \(\varphi\colon(\X,\tau_\X,\sfd_\X\wedge\lambda)\to(\Y,\tau_\Y,\sfd_\Y\wedge\lambda)\)
is continuous-short. Accordingly, it makes sense to define the \emph{\(\lambda\)-truncation functor} \(\mathfrak T_\lambda\colon{\bf ExtMetTop}\to{\bf MetTop}_{\leq\lambda}\) as
\begin{align*}
\mathfrak T_\lambda((\X,\tau,\sfd))\coloneqq(\X,\tau,\sfd\wedge\lambda)&\quad\text{ for every object }(\X,\tau,\sfd)\text{ in }{\bf ExtMetTop},\\
\mathfrak T_\lambda(\varphi)\coloneqq\varphi&\quad\text{ for every morphism }\varphi\colon(\X,\tau_\X,\sfd_\X)\to(\Y,\tau_\Y,\sfd_\Y)\text{ in }{\bf ExtMetTop}.
\end{align*}
Observe that, letting \(\mathfrak D_\lambda\) be as in Section \ref{s:Tych_and_ExtMet}, it holds that
\[
\mathfrak D_\lambda=\mathfrak T_\lambda\circ\mathfrak D_\infty\quad\text{ for every }\lambda\in(0,\infty).
\]
Furthermore, we have the following result (whose proof we omit):
\begin{theorem}
Let \(\lambda\in(0,\infty)\) be given. Then the functor \(\mathfrak T_\lambda\colon{\bf ExtMetTop}\to{\bf MetTop}_{\leq\lambda}\)
is the left adjoint to the inclusion functor \({\bf MetTop}_{\leq\lambda}\hookrightarrow{\bf ExtMetTop}\).
In particular, \({\bf MetTop}_{\leq\lambda}\) is a reflective subcategory of \({\bf ExtMetTop}\), with reflector \(\mathfrak T_\lambda\).
\end{theorem}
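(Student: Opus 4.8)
The plan is to establish the adjunction $\mathfrak T_\lambda\dashv\iota$, where $\iota\colon{\bf MetTop}_{\leq\lambda}\hookrightarrow{\bf ExtMetTop}$ denotes the inclusion, by exhibiting for each object of ${\bf ExtMetTop}$ a universal arrow to $\iota$; since the discussion preceding the statement already records that $\mathfrak T_\lambda$ is a well-defined functor with values in ${\bf MetTop}_{\leq\lambda}$, this will suffice. The asserted reflectivity, together with the identification of $\mathfrak T_\lambda$ as a reflector, then follows at once, $\iota$ being fully faithful as the inclusion of a full subcategory.

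Concretely, for each e.m.t.\ space $(\X,\tau,\sfd)$ I would take as unit the morphism
\[
\eta_{(\X,\tau,\sfd)}\coloneqq{\rm id}_\X\colon(\X,\tau,\sfd)\longrightarrow(\iota\circ\mathfrak T_\lambda)\big((\X,\tau,\sfd)\big)=(\X,\tau,\sfd\wedge\lambda),
\]
which is continuous-short because the topology is unchanged and $\sfd\wedge\lambda\leq\sfd$. The heart of the matter is the universal property: given an object $(\Y,\tau_\Y,\sfd_\Y)$ of ${\bf MetTop}_{\leq\lambda}$ and a continuous-short map $\varphi\colon(\X,\tau,\sfd)\to(\Y,\tau_\Y,\sfd_\Y)$, the only set map $\tilde\varphi$ that can satisfy $\iota(\tilde\varphi)\circ\eta_{(\X,\tau,\sfd)}=\varphi$ is $\varphi$ itself, so it remains only to check that $\varphi\colon(\X,\tau,\sfd\wedge\lambda)\to(\Y,\tau_\Y,\sfd_\Y)$ is still continuous-short — and it is then automatically a morphism of ${\bf MetTop}_{\leq\lambda}$. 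Continuity is unaffected, and for the short condition the key observation is that the diameter bound on the target is exactly what is needed: for all $x,y\in\X$ one has simultaneously $\sfd_\Y(\varphi(x),\varphi(y))\leq\sfd(x,y)$ (as $\varphi$ is short) and $\sfd_\Y(\varphi(x),\varphi(y))\leq\lambda$ (as $(\Y,\sfd_\Y)$ has diameter at most $\lambda$), whence $\sfd_\Y(\varphi(x),\varphi(y))\leq(\sfd\wedge\lambda)(x,y)$.

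I do not anticipate a genuine obstacle: the verification of the universal property is purely formal once the truncation functor is known to be well-defined. The only point warranting a small computation is that $(\X,\tau,\sfd\wedge\lambda)$ is again an e.m.t.\ space, i.e.\ that $\sfd\wedge\lambda$ can be $\tau$-recovered; this follows by clamping each $f\in\LIP_{b,1}(\X,\tau,\sfd)$ to an interval of length $\lambda$ containing $f(x)$, which produces functions lying in $\LIP_{b,1}(\X,\tau,\sfd\wedge\lambda)$ (their oscillation being at most $\lambda$ and their $\sfd$-Lipschitz constant at most $1$) and realising $\sfd\wedge\lambda$ as the required supremum, while the topology is left untouched since $\LIP_{b,1}(\X,\tau,\sfd)\subseteq\LIP_{b,1}(\X,\tau,\sfd\wedge\lambda)\subseteq C_b(\X,\tau)$. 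As this has already been subsumed in the paragraph introducing $\mathfrak T_\lambda$, in the write-up one may simply invoke it — or, as the authors choose, omit the proof altogether.
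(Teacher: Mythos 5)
The paper omits its proof of this theorem, so there is nothing to compare against; your argument is the expected one and its core is correct. The unit $\eta_{(\X,\tau,\sfd)}={\rm id}_\X\colon(\X,\tau,\sfd)\to(\X,\tau,\sfd\wedge\lambda)$ is continuous-short, the factorisation is forced on underlying sets, and the inequality $\sfd_\Y(\varphi(x),\varphi(y))\leq\min\{\sfd(x,y),\lambda\}=(\sfd\wedge\lambda)(x,y)$ is exactly the point where the diameter bound on the target enters. (One cosmetic remark: the paper defines ``left adjoint'' via the counit's universal property, and verifies its other adjunctions in that form; your unit-based verification is equivalent, but in that formulation the check is even more immediate, since any continuous-short map out of $(\X,\tau,\sfd\wedge\lambda)$ is a fortiori continuous-short out of $(\X,\tau,\sfd)$ because $\sfd\wedge\lambda\leq\sfd$.)

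There is, however, one false auxiliary claim in your last paragraph: the inclusion $\LIP_{b,1}(\X,\tau,\sfd)\subseteq\LIP_{b,1}(\X,\tau,\sfd\wedge\lambda)$ does not hold in general. If $\sfd(x,y)>\lambda$, a function $f\in\LIP_{b,1}(\X,\tau,\sfd)$ may satisfy $\lambda<|f(x)-f(y)|\leq\sfd(x,y)$, and then $f$ is not short for $\sfd\wedge\lambda$ (e.g.\ a two-point space with $\sfd(x,y)=2$, $\lambda=1$, $f(x)=0$, $f(y)=2$). So this inclusion cannot be used to show that the initial topology of $\LIP_{b,1}(\X,\tau,\sfd\wedge\lambda)$ is as fine as $\tau$. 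The conclusion is nevertheless true and the repair uses only the clamped functions you already introduced: for $f\in\LIP_{b,1}(\X,\tau,\sfd)$ and $c\in\R$, the function $g_{f,c}\coloneqq\min\{\max\{f,c\},c+\lambda\}$ lies in $\LIP_{b,1}(\X,\tau,\sfd\wedge\lambda)$, and since $g_{f,c}^{-1}((c,b))=f^{-1}((c,b))$ whenever $b\leq c+\lambda$, while arbitrary open intervals are unions of intervals of length at most $\lambda$, the family $\{g_{f,c}\}$ already generates the initial topology of $\LIP_{b,1}(\X,\tau,\sfd)$, namely $\tau$. These same functions realise $\sfd\wedge\lambda$ as the required supremum, as you indicate. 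With this correction the proposal is complete; the slip occurs only in the verification that $\mathfrak T_\lambda$ is well defined on objects, a fact the paper itself asserts without proof.
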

\subsubsection*{The metric-completion functor}
The following definition is taken from \cite[Corollary 2.1.36]{Sav:22}:
\begin{definition}[Metric completion of an e.m.t.\ space]
Let \((\X,\tau,\sfd)\) be an e.m.t.\ space. Then we define its \emph{metric completion} \((\bar\X,\bar\tau,\bar\sfd)\) as follows:
\(\bar\X\) is the \(\gamma\sfd\)-closure of \(\iota_{(\X,\tau,\sfd)}(\X)\) in \(\gamma\X\), and we equip it with the topology
\(\bar\tau\coloneqq(\gamma\tau)\llcorner\bar\X\) and the extended distance \(\bar\sfd\coloneqq(\gamma\sfd)|_{\bar\X\times\bar\X}\).
\end{definition}

Note that \((\bar\X,\bar\tau,\bar\sfd)\) is a metrically-complete e.m.t.\ space. Moreover, if \((\X,\tau_\X,\sfd_\X)\), \((\Y,\tau_\Y,\sfd_\Y)\)
are e.m.t.\ spaces and \(\varphi\colon(\X,\tau_\X,\sfd_\X)\to(\Y,\tau_\Y,\sfd_\Y)\) is a continuous-short map, then \(\bar\varphi|_{\bar\X}\colon\bar\X\to\bar\Y\)
is the unique continuous-short map such that
\begin{equation}\label{eq:diagr_mc}\begin{tikzcd}[column sep=huge]
(\X,\tau_\X,\sfd_\X) \arrow[r,"\iota_{(\X,\tau_\X,\sfd_\X)}"] \arrow[d,swap,"\varphi"] & (\bar\X,\bar\tau_\X,\bar\sfd_\X) \arrow[d,"\bar\varphi"] \\
(\Y,\tau_\Y,\sfd_\Y) \arrow[r,swap,"\iota_{(\Y,\tau_\Y,\sfd_\Y)}"] & (\bar\Y,\bar\tau_\Y,\bar\sfd_\Y)
\end{tikzcd}\end{equation}
is a commutative diagram. Indeed, the inclusion \((\gamma\varphi)(\bar\X)\subseteq\bar\Y\) follows from the fact that
if \(x\in\bar\X\) and \((x_n)_{n\in\N}\subseteq\X\) are chosen so that \(\bar\sfd_\X(\iota_{(\X,\tau_\X,\sfd_\X)}(x_n),x)\to 0\) as \(n\to\infty\), then we have
\[
\bar\sfd_\Y(\iota_{(\Y,\tau_\Y,\sfd_\Y)}(\varphi(x_n)),(\gamma\varphi)(x))\leq\bar\sfd_\X(\iota_{(\X,\tau_\X,\sfd_\X)}(x_n),x)\to 0\quad\text{ as }n\to\infty.
\]
It is then clear that \(\bar\varphi\) is the unique continuous-short map making the diagram in \eqref{eq:diagr_mc} commute.
\begin{definition}[The metric completion functor]
We define the \emph{metric completion functor}
\[
{\sf mc}\colon{\bf ExtMetTop}\to{\bf ExtMetTop}_{\rm mc}
\]
as follows: to any object \((\X,\tau,\sfd)\in{\rm Obj}({\bf ExtMetTop})\) we assign the object
\[
{\sf mc}((\X,\tau,\sfd))\coloneqq(\bar\X,\bar\tau,\bar\sfd)\in{\rm Obj}({\bf ExtMetTop}_{\rm mc}),
\]
and to any morphism \(\varphi\colon(\X,\tau_\X,\sfd_\X)\to(\Y,\tau_\Y,\sfd_\Y)\) in \({\bf ExtMetTop}\) we assign
the morphism
\[
{\sf mc}(\varphi)\coloneqq\bar\varphi\colon(\bar\X,\bar\tau_\X,\bar\sfd_\X)\to(\bar\Y,\bar\tau_\Y,\bar\sfd_\Y)
\quad\text{ in }{\bf ExtMetTop}_{\rm mc}.
\]
\end{definition}

We omit the (easy) proof of the following statement:
\begin{theorem}
The metric completion functor \({\sf mc}\colon{\bf ExtMetTop}\to{\bf ExtMetTop}_{\rm mc}\) is left adjoint to the inclusion functor
\({\bf ExtMetTop}_{\rm mc}\hookrightarrow{\bf ExtMetTop}\). In particular, we have that \({\bf ExtMetTop}_{\rm mc}\)
is a reflective subcategory of \({\bf ExtMetTop}\), with reflector \({\sf mc}\).
\end{theorem}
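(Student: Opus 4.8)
The plan is to mimic the proof of Theorem \ref{thm:emt-fication} (and of its compactification analogue): rather than constructing the adjunction bijection by hand, I would exhibit a unit morphism and verify the associated universal property, which is the standard way to recognise a left adjoint. For an e.m.t.\ space $(\X,\tau,\sfd)$ the unit is the continuous-short map $\iota_{(\X,\tau,\sfd)}\colon(\X,\tau,\sfd)\to(\bar\X,\bar\tau,\bar\sfd)={\sf mc}((\X,\tau,\sfd))$ appearing in the diagram \eqref{eq:diagr_mc}, i.e.\ the corestriction to $\bar\X$ of the compactification embedding. By Theorem \ref{thm:compactif_emt} ii) it is an embedding of e.m.t.\ spaces; by the very definition of $\bar\X$ as the $\gamma\sfd$-closure of $\iota_{(\X,\tau,\sfd)}(\X)$ its image is $\bar\sfd$-dense in $\bar\X$; and $(\bar\X,\bar\sfd)$ is complete, as already noted.

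The only point requiring a genuine argument is the observation that, when $(\Y,\tau_\Y,\sfd_\Y)$ is \emph{itself} metrically complete, the canonical map $\iota_{(\Y,\tau_\Y,\sfd_\Y)}\colon(\Y,\tau_\Y,\sfd_\Y)\to(\bar\Y,\bar\tau_\Y,\bar\sfd_\Y)$ is an isomorphism in ${\bf ExtMetTop}$. Indeed, it is distance preserving with $\bar\sfd_\Y$-dense image, so $(\iota_{(\Y,\tau_\Y,\sfd_\Y)}(\Y),\bar\sfd_\Y)$ is isometric to the complete extended metric space $(\Y,\sfd_\Y)$; since a complete subset of an extended metric space is closed with respect to the induced topology, the image $\iota_{(\Y,\tau_\Y,\sfd_\Y)}(\Y)$ is simultaneously closed and dense in $(\bar\Y,\bar\sfd_\Y)$, hence equals $\bar\Y$. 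Thus $\iota_{(\Y,\tau_\Y,\sfd_\Y)}$ is a surjective embedding of e.m.t.\ spaces, that is, a distance-preserving homeomorphism, and therefore an isomorphism by Proposition \ref{prop:basic_ExtMetTop} ii).

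With this in hand the factorisation is produced using the morphism ${\sf mc}(\varphi)$ already at our disposal. Given a continuous-short map $\varphi\colon(\X,\tau,\sfd)\to(\Y,\tau_\Y,\sfd_\Y)$ with $(\Y,\tau_\Y,\sfd_\Y)$ metrically complete, set $\tilde\varphi\coloneqq\iota_{(\Y,\tau_\Y,\sfd_\Y)}^{-1}\circ{\sf mc}(\varphi)\colon(\bar\X,\bar\tau,\bar\sfd)\to(\Y,\tau_\Y,\sfd_\Y)$, which is a continuous-short map (recall from the discussion preceding the definition of ${\sf mc}$ that $(\gamma\varphi)(\bar\X)\subseteq\bar\Y$, so that ${\sf mc}(\varphi)$ is indeed a continuous-short map $\bar\X\to\bar\Y$ making \eqref{eq:diagr_mc} commute); then ${\sf mc}(\varphi)\circ\iota_{(\X,\tau,\sfd)}=\iota_{(\Y,\tau_\Y,\sfd_\Y)}\circ\varphi$ by \eqref{eq:diagr_mc}, and hence $\tilde\varphi\circ\iota_{(\X,\tau,\sfd)}=\varphi$. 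For uniqueness, let $\psi\colon(\bar\X,\bar\tau,\bar\sfd)\to(\Y,\tau_\Y,\sfd_\Y)$ be a continuous-short map with $\psi\circ\iota_{(\X,\tau,\sfd)}=\varphi$; then $\psi$ and $\tilde\varphi$ agree on $\iota_{(\X,\tau,\sfd)}(\X)$, and for an arbitrary $x\in\bar\X$ one may choose $(x_n)_n\subseteq\iota_{(\X,\tau,\sfd)}(\X)$ with $\bar\sfd(x_n,x)\to 0$, so that — as $\psi,\tilde\varphi$ are short and $\psi(x_n)=\tilde\varphi(x_n)$ — one gets $\sfd_\Y(\psi(x),\tilde\varphi(x))\leq\sfd_\Y(\psi(x),\psi(x_n))+\sfd_\Y(\tilde\varphi(x_n),\tilde\varphi(x))\leq 2\,\bar\sfd(x_n,x)\to 0$, whence $\psi(x)=\tilde\varphi(x)$ because $\sfd_\Y$ is a genuine extended distance. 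This verifies the universal property, so ${\sf mc}$ is left adjoint to the inclusion ${\bf ExtMetTop}_{\rm mc}\hookrightarrow{\bf ExtMetTop}$ and ${\bf ExtMetTop}_{\rm mc}$ is reflective in ${\bf ExtMetTop}$; naturality of the unit, should one wish to record it, is immediate from \eqref{eq:diagr_mc}.
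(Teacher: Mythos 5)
Your proof is correct: the paper itself omits the argument (declaring it ``easy''), and what you supply is precisely the verification it intends, following the same unit-plus-universal-property template used for Theorem \ref{thm:emt-fication} and the compactification adjunction. The two points that genuinely need checking --- that $\iota_{(\Y,\tau_\Y,\sfd_\Y)}$ is an isomorphism when $(\Y,\sfd_\Y)$ is complete (closed-and-dense image), and the uniqueness of the factorisation via metric density of $\iota_{(\X,\tau,\sfd)}(\X)$ in $(\bar\X,\bar\sfd)$ together with shortness of the two candidate maps --- are both handled correctly.
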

\begin{remark}{\rm
Let us highlight a significant dissimilarity between the completion functor in the category of extended metric spaces (which maps an extended metric
space \((\X,\sfd)\) to its completion \((\bar\X,\bar\sfd)\) as in Proposition \ref{prop:completion_extmet}) and the metric completion functor \({\sf mc}\)
in \({\bf ExtMetTop}\). Any given extended metric space \((\X,\sfd)\) is (canonically isomorphic to) the coproduct of
\(\{({\rm Z},\sfd|_{{\rm Z}\times{\rm Z}}):{\rm Z}\in\mathscr F_\sfd\}\) in \({\bf ExtMet}\), thus its metric completion
\((\bar\X,\bar\sfd)\) is the coproduct of
\(\{(\bar{\rm Z},\bar\sfd|_{\bar{\rm Z}\times\bar{\rm Z}}):{\rm Z}\in\mathscr F_\sfd\}\) in \({\bf ExtMet}\),
consistently with the general principle that left adjoint functors (such as the completion functor)
commute with colimits (such as the coproduct). By contrast, an e.m.t.\ space \((\X,\tau,\sfd)\) is not always
the coproduct of \(\{({\rm Z},\tau\llcorner{\rm Z},\sfd|_{{\rm Z}\times{\rm Z}}):{\rm Z}\in\mathscr F_\sfd\}\)
in \({\bf ExtMetTop}\), and the metric completion \((\bar\X,\bar\tau,\bar\sfd)\) is not always the coproduct
of \(\{(\bar{\rm Z},\bar\tau\llcorner\bar{\rm Z},\bar\sfd|_{\bar{\rm Z}\times\bar{\rm Z}}):{\rm Z}\in\mathscr F_\sfd\}\)
in \({\bf ExtMetTop}_{\rm mc}\), as easy examples illustrate (for instance, equip a Tychonoff space whose topology is not discrete
with the \(\infty\)-discrete distance). This means, roughly speaking, that -- in general -- the metric completion of an
e.m.t.\ space can be obtained only by considering the e.m.t.\ space `as a whole'; in other words, the metric completion
cannot be constructed starting from the metric completions of the components \({\rm Z}\in\mathscr F_\sfd\), differently
from what happens with the completion of an extended metric space.
\fr}\end{remark}
\subsubsection*{The geodesification functor}
Lastly, we introduce the geodesification functor, which allows us to reduce many problems for compact e.m.t.\ spaces
to the setting of compact geodesic e.m.t.\ spaces.
\begin{definition}[The geodesification functor]
We define the \emph{geodesification functor}
\[
{\sf geo}\colon{\bf CptExtMetTop}\to{\bf CptExtMetTop}_{\rm geo}
\]
as follows: to any object \((\X,\tau,\sfd)\in{\rm Obj}({\bf CptExtMetTop})\) we assign the object
\begin{equation}\label{eq:geodes_functor}
{\sf geo}((\X,\tau,\sfd))\coloneqq(\X,\tau,\sfd_\ell)\in{\rm Obj}({\bf CptExtMetTop}_{\rm geo}),
\end{equation}
and to any morphism \(\varphi\colon(\X,\tau_\X,\sfd_\X)\to(\Y,\tau_\Y,\sfd_\Y)\) in \({\bf CptExtMetTop}\) we assign
the morphism \({\sf geo}(\varphi)\coloneqq\varphi\colon(\X,\tau_\X,(\sfd_\X)_\ell)\to(\Y,\tau_\Y,(\sfd_\Y)_\ell)\) in \({\bf CptExtMetTop}_{\rm geo}\).
\end{definition}

Note that \eqref{eq:geodes_functor} is well posed thanks to \eqref{eq:cpt_gives_length_emt}. Moreover,
if \(\varphi\colon(\X,\tau_\X,\sfd_\X)\to(\Y,\tau_\Y,\sfd_\Y)\) is a morphism in \({\bf CptExtMetTop}\), then a fortiori
\(\varphi\colon(\X,\tau_\X,\sfd_\X)\to(\Y,\tau_\Y,(\sfd_\Y)_\ell)\) is a continuous-short map (as \(\sfd_\Y\leq(\sfd_\Y)_\ell\))
and accordingly also \(\varphi\colon(\X,\tau_\X,(\sfd_\X)_\ell)\to(\Y,\tau_\Y,(\sfd_\Y)_\ell)\) is a continuous-short
map by Remark \ref{rmk:geo_functor_mor}. All in all, we have shown that the functor \({\sf geo}\) is well defined.
\medskip

We omit the (easy) proof of the following statement:
\begin{theorem}
The geodesification functor \({\sf geo}\colon{\bf CptExtMetTop}\to{\bf CptExtMetTop}_{\rm geo}\) is left adjoint to the inclusion functor
\({\bf CptExtMetTop}_{\rm geo}\hookrightarrow{\bf CptExtMetTop}\). In particular, we have that \({\bf CptExtMetTop}_{\rm geo}\)
is a reflective subcategory of \({\bf CptExtMetTop}\), with reflector \({\sf geo}\).
\end{theorem}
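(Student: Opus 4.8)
The plan is to prove the stated adjunction \({\sf geo}\dashv\iota\) by exhibiting a unit together with the associated universal property, following the very same template as the proofs of Theorem \ref{thm:emt-fication} and Theorem \ref{thm:Tych_in_ExtMetTop}. The first thing I would record is that, for every compact e.m.t.\ space \((\X,\tau,\sfd)\), the object \((\iota\circ{\sf geo})((\X,\tau,\sfd))=(\X,\tau,\sfd_\ell)\) genuinely lies in \({\bf CptExtMetTop}_{\rm geo}\): this is exactly the content of \eqref{eq:cpt_gives_length_emt}, which guarantees that passing to the length distance \(\sfd_\ell\) of a compact e.m.t.\ space yields again a (geodesic) e.m.t.\ space carried by the same compact topology. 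The unit at \((\X,\tau,\sfd)\) will be the comparison map \(\eta_{(\X,\tau,\sfd)}\) that acts as the identity on the underlying set, relating \((\X,\tau,\sfd)\) and \((\X,\tau,\sfd_\ell)\).

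Next I would verify the universal property. Fixing a compact geodesic e.m.t.\ space \((\Y,\tau_\Y,\sfd_\Y)\) and a morphism \(\varphi\colon(\X,\tau,\sfd)\to(\Y,\tau_\Y,\sfd_\Y)\) in \({\bf CptExtMetTop}\), I would define \(\tilde\varphi\colon(\X,\tau,\sfd_\ell)\to(\Y,\tau_\Y,\sfd_\Y)\) to be the same underlying map as \(\varphi\). Its \(\tau\)-to-\(\tau_\Y\) continuity is immediate, since \({\sf geo}\) leaves the topology untouched; the crucial point is that \(\tilde\varphi\) is again \emph{short}. Here I would invoke Remark \ref{rmk:geo_functor_mor}: because \((\Y,\sfd_\Y)\) is geodesic, hence a length space, that remark applies verbatim and upgrades the \(\sfd\)-shortness of \(\varphi\) to the \(\sfd_\ell\)-shortness of \(\tilde\varphi\). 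Commutativity of the resulting factorisation triangle through \(\eta_{(\X,\tau,\sfd)}\) holds by construction, and uniqueness follows because \(\eta_{(\X,\tau,\sfd)}\) is a pointwise identity, hence surjective, so any two continuous-short factorisations that agree after precomposition with it must coincide.

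Granting the universal property, the correspondence \(\varphi\mapsto\tilde\varphi\) furnishes the natural identification
\[
CS((\X,\tau,\sfd);(\Y,\tau_\Y,\sfd_\Y))\cong CS((\X,\tau,\sfd_\ell);(\Y,\tau_\Y,\sfd_\Y))
\]
that encodes \({\sf geo}\dashv\iota\), in complete analogy with the bijection \eqref{eq:bij_CS_emt}; naturality in both variables is then checked exactly as in Lemma \ref{lem:emt-fication_morphisms} and in the naturality part of Theorem \ref{thm:emt-fication}, exploiting that \({\sf geo}\) acts as the identity on underlying maps. The asserted reflectivity of \({\bf CptExtMetTop}_{\rm geo}\) in \({\bf CptExtMetTop}\), with reflector \({\sf geo}\), is then a formal consequence of the adjunction. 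I expect the main obstacle to be precisely the shortness clause in the step above, namely promoting \(\sfd\)-shortness into the length target to \(\sfd_\ell\)-shortness: this is the curve–partition estimate carried out in Remark \ref{rmk:geo_functor_mor}, and it is the sole place where the geodesic (length) hypothesis on the target is genuinely used. Compactness, by contrast, intervenes only through \eqref{eq:cpt_gives_length_emt}, to ensure that \({\sf geo}\) is well defined as a functor into \({\bf CptExtMetTop}_{\rm geo}\) in the first place.
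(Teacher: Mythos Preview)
There is a genuine gap: your proposed unit \(\eta_{(\X,\tau,\sfd)}\colon(\X,\tau,\sfd)\to(\X,\tau,\sfd_\ell)\), acting as the identity on points, is \emph{not} a morphism in \({\bf CptExtMetTop}\). Since \(\sfd\le\sfd_\ell\) always holds, the identity is short from \((\X,\sfd_\ell)\) to \((\X,\sfd)\) but not in the direction you need (except when \(\sfd=\sfd_\ell\)). You never verify that \(\eta\) is continuous-short, and in general it is not. Correspondingly, the hom-set identification you assert is only an inclusion, not a bijection: take \(\X=\{a,b\}\) with the discrete topology and \(\sfd(a,b)=1\) (so \(\sfd_\ell(a,b)=+\infty\), there being no non-constant continuous curves), and \(\Y=[0,2]\) Euclidean; the map \(a\mapsto 0\), \(b\mapsto 2\) lies in \(CS((\X,\tau,\sfd_\ell);\Y)\) but not in \(CS((\X,\tau,\sfd);\Y)\). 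Your appeal to Remark~\ref{rmk:geo_functor_mor} only produces the inclusion of the second hom-set into the first, and that inclusion is in fact immediate from \(\sfd\le\sfd_\ell\), without using that \(\Y\) is geodesic.

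The deeper issue is that the direction of the adjunction in the stated theorem appears to be inverted: what the argument you sketch actually establishes (once repaired) is \(\iota\dashv{\sf geo}\), so that \({\bf CptExtMetTop}_{\rm geo}\) is \emph{coreflective} rather than reflective in \({\bf CptExtMetTop}\). The counit is the identity \(\varepsilon_{(\X,\tau,\sfd)}\colon(\X,\tau,\sfd_\ell)\to(\X,\tau,\sfd)\), which \emph{is} short since \(\sfd\le\sfd_\ell\), and the non-trivial half of the bijection \(CS(\Y;(\X,\tau,\sfd))\cong CS(\Y;(\X,\tau,\sfd_\ell))\) for \(\Y\) geodesic is obtained by pushing geodesics of \(\Y\) forward along a given \(\sfd\)-short map to witness \(\sfd_\ell\)-shortness into \(\X\). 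That is precisely the curve-length estimate you had in mind, but with the roles of source and target exchanged; the geodesic hypothesis then sits on the \emph{domain}, where it is genuinely needed.
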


In the above discussion we consider only e.m.t.\ spaces that are compact, on account of \eqref{eq:cpt_gives_length_emt}.
On the other hand, it is not clear to us whether it is possible to construct a functor sending each (possibly non-compact)
e.m.t.\ space to some associated length e.m.t.\ space, if any.
\begin{remark}{\rm
The category \({\bf ExtMetTop}_{\rm len}\) admits all equalisers and products, as one can readily check by showing
that all equalisers and products in \({\bf ExtMetTop}\) preserve the property of being a length e.m.t.\ space;
we omit the details. Therefore, \({\bf ExtMetTop}_{\rm len}\) is a complete category, thanks to
Theorem \ref{thm:exist_thm_lim}. However, we do not know whether \({\bf ExtMetTop}_{\rm len}\) is cocomplete.
\fr}\end{remark}
\appendix
\section{Compendium of category theory}\label{app:categories}
In this appendix, we collect those concepts and results in category theory that have an essential r\^{o}le in this paper.
We refer to \cite{riehl2017category} for a detailed account of category theory. See also \cite{MacLane98,kashiwara2005categories,Awodey10}.
\medskip

By a functor between categories, we always mean a covariant functor. A functor that is injective (resp.\ surjective)
on hom-sets is said to be \emph{faithful} (resp.\ \emph{full}). A functor that is both full and faithful is said to
be \emph{fully faithful}. Given a category \({\bf C}\) and a subcategory \({\bf S}\) of \({\bf C}\), the inclusion
functor \({\bf S}\hookrightarrow{\bf C}\) is faithful and injective on objects, and it is a full functor if and only
if \({\bf S}\) is a full subcategory of \({\bf C}\). For any given collection \(A\) of objects of \({\bf C}\), there exists
a unique full subcategory of \({\bf C}\) whose objects are exactly those in \(A\). Given a morphism \(\varphi\colon a\to b\)
in \({\bf C}\), we say that:
\begin{itemize}
\item \(\varphi\) is a \emph{monomorphism} if \(\psi_1=\psi_2\) whenever \(c\in{\rm Obj}({\bf C})\) and \(\psi_1,\psi_2\colon c\to a\)
are morphisms in \({\bf C}\) satisfying \(\varphi\circ\psi_1=\varphi\circ\psi_2\).
\item \(\varphi\) is an \emph{epimorphism} if \(\psi_1=\psi_2\) whenever \(c\in{\rm Obj}({\bf C})\) and \(\psi_1,\psi_2\colon b\to c\)
are morphisms in \({\bf C}\) satisfying \(\psi_1\circ\varphi=\psi_2\circ\varphi\).
\item \(\varphi\) is a \emph{bimorphism} if it is both a monomorphism and an epimorphism.
\item \(\varphi\) is an \emph{isomorphism} if there exists a (necessarily unique) morphism \(\varphi^{-1}\colon b\to a\) in \({\bf C}\) such that
\(\varphi^{-1}\circ\varphi={\rm id}_a\) and \(\varphi\circ\varphi^{-1}={\rm id}_b\). The morphism \(\varphi^{-1}\) is called the \emph{inverse} of \(\varphi\).
\end{itemize}
A category \({\bf C}\) is said to be \emph{balanced} if every bimorphism in \({\bf C}\) is an isomorphism.
\medskip

An \emph{initial object} of a category \({\bf C}\) is an object \(i\) of \({\bf C}\) such that for any \(a\in{\rm Obj}({\bf C})\)
there exists a unique morphism \(i\to a\). Dually, a \emph{terminal object} of \({\bf C}\) is an object \(t\) of \({\bf C}\) such
that for any \(a\in{\rm Obj}({\bf C})\) there exists a unique morphism \(a\to t\). Initial and terminal objects do not always
exist, but when they do, they are unique up to a unique isomorphism.
\subsubsection*{Comma categories}
Let \({\bf A}\), \({\bf B}\) be given categories. Given an object \(b\) of \({\bf B}\) and a functor \(F\colon{\bf A}\to{\bf B}\),
\[
F\downarrow b
\]
denotes the \emph{comma category} of \(F\) and \(b\), which is defined as follows:
\begin{itemize}
\item The objects of \(F\downarrow b\) are all pairs \((a,\varphi)\), where \(a\) is an object of \({\bf A}\) and \(\varphi\colon F(a)\to b\) is a morphism in \({\bf B}\).
\item The morphisms between two objects \((a,\varphi)\) and \((a',\varphi')\) of \(F\downarrow b\) are given by those morphisms \(\alpha\colon a\to a'\) in \({\bf A}\)
such that
\[\begin{tikzcd}
F(a) \arrow[r,"F(\alpha)"] \arrow[dr,swap,"\varphi"] & F(a') \arrow[d,"\varphi'"] \\
& b
\end{tikzcd}\]
is a commutative diagram.
\end{itemize}
Likewise, one can define the comma category \(b\downarrow F\), whose objects are pairs \((a,\varphi)\) with \(\varphi\colon b\to F(a)\)
and whose morphisms \(\alpha\colon(a,\varphi)\to(a',\varphi')\) are those morphisms \(\alpha\colon a\to a'\) such that \(F(\alpha)\circ\varphi=\varphi'\).
More generally, it is possible to define the comma category \(F\downarrow G\) of any two given functors \(F\colon{\bf A}\to{\bf B}\) and \(G\colon{\bf C}\to{\bf B}\),
see e.g.\ \cite[Exercise 1.3.vi]{riehl2017category}.
\subsubsection*{Strict pullback}
Given two categories \({\bf A}\) and \({\bf B}\), their \emph{product category} \({\bf A}\times{\bf B}\)
is defined as follows:
\begin{itemize}
\item The objects of \({\bf A}\times{\bf B}\) are all pairs \((a,b)\), where \(a\) (resp.\ \(b)\)
is an object of \({\bf A}\) (resp.\ of \({\bf B}\)).
\item The morphisms between two objects \((a,b)\) and \((a',b')\) of \({\bf A}\times{\bf B}\) are given
by all pairs \((\varphi,\psi)\), where \(\varphi\colon a\to a'\) is a morphism in \({\bf A}\) and
\(\psi\colon b\to b'\) is a morphism in \({\bf B}\).
\end{itemize}
The composition of two morphisms \((\varphi,\psi)\colon(a,b)\to(a',b')\) and \((\varphi',\psi')\colon(a',b')\to(a'',b'')\)
in \({\bf A}\times{\bf B}\) is defined as \((\varphi',\psi')\circ(\varphi,\psi)\coloneqq(\varphi'\circ\varphi,\psi'\circ\psi)\colon(a,b)\to(a'',b'')\).
We denote by \(\pi_1\colon{\bf A}\times{\bf B}\to{\bf A}\) and \(\pi_2\colon{\bf A}\times{\bf B}\to{\bf B}\)
the first and second \emph{projection functors}, respectively, which are defined as
\[\begin{split}
\pi_1((a,b))\coloneqq a,\quad\pi_2((a,b))\coloneqq b&\quad\text{ for every object }(a,b)\text{ of }{\bf A}\times{\bf B},\\
\pi_1((\varphi,\psi))\coloneqq\varphi,\quad\pi_2((\varphi,\psi))\coloneqq\psi&\quad\text{ for every morphism }
(\varphi,\psi)\text{ in }{\bf A}\times{\bf B}.
\end{split}\]
\begin{definition}[Strict pullback of functors]\label{def:strict_pullback}
Let \({\bf A}\), \({\bf B}\) and \({\bf C}\) be given categories. Let \(F\colon{\bf A}\to{\bf C}\) and
\(G\colon{\bf B}\to{\bf C}\) be two functors. Then we define the \emph{strict pullback} of \(F\) and \(G\)
as the following subcategory \({\bf SP}(F,G)\) of \({\bf A}\times{\bf B}\):
\begin{itemize}
\item The objects of \({\bf SP}(F,G)\) are all those \((a,b)\in{\rm Obj}({\bf A}\times{\bf B})\) such that \(F(a)=G(b)\).
\item The morphisms in \({\bf SP}(F,G)\) are all those morphisms \((\varphi,\psi)\colon(a,b)\to(a',b')\) in \({\bf A}\times{\bf B}\)
such that \((a,b),(a',b')\in{\rm Obj}({\bf SP}(F,G))\) and \(F(\varphi)=G(\psi)\).
\end{itemize}
\end{definition}

Equivalently, \({\bf SP}(F,G)\) is the full subcategory of the comma category \(F\downarrow G\) whose objects are those
triples \((a,b,h)\in{\rm Obj}(F\downarrow G)\) such that \(F(a)=G(b)\) and \(h={\rm id}_{F(a)}\colon F(a)\to F(a)=G(b)\).
\subsubsection*{Natural transformations}
Let \({\bf A}\), \({\bf B}\) be given categories. Let \(F,G\colon{\bf A}\to{\bf B}\) be two functors.
Then a \emph{natural transformation} from \(F\) to \(G\) is a collection \(\eta_\star\) of morphisms
\(\eta_a\colon F(a)\to G(a)\) in \({\bf B}\), indexed over \(a\in{\rm Obj}({\bf A})\), such that the diagram
\[\begin{tikzcd}
F(a) \arrow[r,"\eta_a"] \arrow[d,swap,"F(\varphi)"] & G(a) \arrow[d,"G(\varphi)"] \\
F(b) \arrow[r,swap,"\eta_b"] & G(b)
\end{tikzcd}\]
commutes for every morphism \(\varphi\colon a\to b\) in \({\bf A}\). The \emph{functor category} \({\bf B}^{\bf A}\)
is defined as follows: the objects of \({\bf B}^{\bf A}\) are all functors from \({\bf A}\) to \({\bf B}\), while the
morphisms of \({\bf B}^{\bf A}\) are all natural transformations between functors from \({\bf A}\) to \({\bf B}\).
Given three functors \(F,G,H\colon{\bf A}\to{\bf B}\), a natural transformation \(\eta_\star\) from \(F\) to \(G\), and
a natural transformation \(\zeta_\star\) from \(G\) to \(H\), the composition \(\xi_\star\coloneqq\zeta_\star\circ\eta_\star\)
is the natural transformation given by \(\xi_a\coloneqq\zeta_a\circ\eta_a\colon F(a)\to H(a)\) for every \(a\in{\rm Obj}({\bf A})\).
\subsubsection*{Cones and cocones}
By a \emph{diagram} of type \({\bf J}\) in a given category \({\bf C}\), we mean a functor \(D\colon{\bf J}\to{\bf C}\)
that is defined on a small category \({\bf J}\), which we refer to as an \emph{index category} (it is worth stressing that,
differently from some other authors, we require all index categories to be small). In other words, the diagrams of type
\({\bf J}\) in \({\bf C}\) are exactly the objects of the functor category \({\bf C}^{\bf J}\), which we thus rename to
\emph{category of diagrams}. The \emph{diagonal functor} \(\Delta_{{\bf J},{\bf C}}\colon{\bf C}\to{\bf C}^{\bf J}\)
is defined as follows:
\begin{itemize}
\item For any \(a\in{\rm Obj}({\bf C})\), we define \(\Delta_{{\bf J},{\bf C}}(a)\in{\rm Obj}({\bf C}^{\bf J})\)
as \(\Delta_{{\bf J},{\bf C}}(a)(i)\coloneqq a\) for every \(i\in{\rm Obj}({\bf J})\) and
\(\Delta_{{\bf J},{\bf C}}(a)(\phi)\coloneqq{\rm id}_a\) for every morphism \(\phi\colon i\to j\) in \({\bf J}\).
\item For any morphism \(\varphi\colon a\to b\) in \({\bf C}\), we define the natural transformation
\(\Delta_{{\bf J},{\bf C}}(\varphi)_\star\) as \(\Delta_{{\bf J},{\bf C}}(\varphi)_i\coloneqq\varphi\)
for every \(i\in{\rm Obj}({\bf J})\).
\end{itemize}
The \emph{category of cones} to \(D\) of type \({\bf J}\) is defined as \({\bf Cone}(D)\coloneqq\Delta_{{\bf J},{\bf C}}\downarrow D\),
whereas the \emph{category of cocones} from \(D\) of type \({\bf J}\) is defined as
\({\bf Cocone}(D)\coloneqq D\downarrow\Delta_{{\bf J},{\bf C}}\).
Videlicet, a cone to \(D\) (resp.\ a cocone from \(D\)) of type \({\bf J}\) is given by a pair \((c,\varphi_\star)\), where
\(c\in{\rm Obj}({\bf C})\) and \(\varphi_\star=\{\varphi_i:i\in{\rm Obj}({\bf J})\}\) is a collection of morphisms
\(\varphi_i\colon c\to D(i)\) (resp.\ \(\varphi_i\colon D(i)\to c\)) satisfying \(D(\phi)\circ\varphi_i=\varphi_j\)
(resp.\ \(\varphi_j\circ D(\phi)=\varphi_i\)) for every morphism \(\phi\colon i\to j\) in \({\bf J}\).
Given another category \({\bf A}\) and a functor \(F\colon{\bf C}\to{\bf A}\), we have that \(F\circ D\colon{\bf J}\to{\bf A}\)
is a diagram of type \({\bf J}\) in \({\bf A}\). Moreover, we denote by
\[
F_{D,{\rm cone}}\colon{\bf Cone}(D)\to{\bf Cone}(F\circ D),\qquad F_{D,{\rm cocone}}\colon {\bf Cocone}(D)\to{\bf Cocone}(F\circ D)
\]
the functors sending cones to cones and cocones to cocones, respectively, that are induced by \(F\):
\[\begin{split}
F_{D,{\rm cone}}((c,\varphi_\star))&\coloneqq(F(c),F(\varphi_\star))\quad\text{ for every cone }(c,\varphi_\star)\text{ to }D,\\
F_{D,{\rm cocone}}((c,\varphi_\star))&\coloneqq(F(c),F(\varphi_\star))\quad\text{ for every cocone }(c,\varphi_\star)\text{ from }D,
\end{split}\]
where we set \(F_{D,{\rm cone}}(\varphi_\star)\coloneqq\{F(\varphi_i):i\in{\rm Obj}({\bf J})\}\eqqcolon F_{D,{\rm cocone}}(\varphi_\star)\).
\subsubsection*{Limits and colimits}
Let \(D\colon{\bf J}\to{\bf C}\) be a diagram of type \({\bf J}\) in a category \({\bf C}\). 
A terminal object of \({\bf Cone}(D)\) (resp.\ an initial object of \({\bf Cocone}(D)\))
is called a \emph{limit} (resp.\ a \emph{colimit}) of \(D\). In other words, limits are universal
cones to \(D\), while colimits are universal cocones from \(D\). If \({\bf J}\) is a finite category, we speak of finite (co)limits.
Some relevant examples of (co)limits are the following:
\begin{itemize}
\item Terminal (resp.\ initial) objects are limits (resp.\ colimits) of empty diagrams.
\item Limits (resp.\ colimits) of diagrams indexed by discrete categories (i.e.\ whose only morphisms are the identity morphisms)
are called \emph{products} (resp. \emph{coproducts}).
\item Limits (resp.\ colimits) of the diagram \({\bf J}_\rightrightarrows\), which consists of two objects \(i\), \(j\) and of
two parallel morphisms from \(i\) to \(j\), are called \emph{equalisers} (resp.\ \emph{coequalisers}).
\item If the index category \({\bf J}_\preceq\) is induced by a directed set \((I,\preceq)\) (i.e.\ \({\rm Obj}({\bf J}_\preceq)=I\)
and, given any \(i,j\in I\), we have that \(i\preceq j\) if and only if there is a unique morphism \(i\to j\) in \({\bf J}_\preceq\)),
then limits (resp.\ colimits) of the diagram \({\bf J}_\preceq\) are called \emph{inverse limits} (resp.\ \emph{direct limits}).
\end{itemize}
(Co)limits do not always exist, but when they do, they are unique up to a unique isomorphism.
\subsubsection*{Completeness and cocompleteness}
The following definitions concern the existence of (co)limits:
\begin{itemize}
\item We say that a category is \emph{finitely complete} (resp.\ \emph{finitely cocomplete}) if it admits all finite limits
(resp.\ all finite colimits).
\item We say that a category is \emph{complete} (resp.\ \emph{cocomplete}) if it admits all limits (resp.\ colimits).
\item A category that is both complete and cocomplete is said to be \emph{bicomplete}.
\end{itemize}
We point out that in the definition of (co/bi)completeness, we ask for the existence only of small (co)limits
(i.e.\ only of (co)limits of diagrams of type \({\bf J}\), where \({\bf J}\) is a small index category).
Let us also recall a useful criterion for checking the (co)completeness of a category \cite[\S V.2 Theorem 1]{MacLane98}:
\begin{theorem}[Existence theorem for (co)limits]\label{thm:exist_thm_lim}
A category is complete (resp.\ cocomplete) if and only if it admits all products and equalisers (resp.\ all coproducts
and coequalisers).
\end{theorem}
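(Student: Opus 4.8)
The plan is to reduce the construction of an arbitrary small limit to a product followed by an equaliser, which is the classical argument behind this criterion. One implication is immediate: if \({\bf C}\) is complete, then in particular it possesses limits of all diagrams indexed by small discrete categories (scilicet, all products) and limits of the diagram \({\bf J}_\rightrightarrows\) (scilicet, all equalisers). For the converse, I would assume that \({\bf C}\) admits all small products and all equalisers, fix a diagram \(D\colon{\bf J}\to{\bf C}\) with \({\bf J}\) a small index category, and exhibit an explicit terminal object of \({\bf Cone}(D)\).

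Since \({\bf J}\) is small, both \({\rm Obj}({\bf J})\) and the collection of all morphisms of \({\bf J}\) are genuine sets, so the following two products exist by hypothesis:
\[
P\coloneqq\prod_{i\in{\rm Obj}({\bf J})}D(i),\qquad Q\coloneqq\prod_{(\phi\colon i\to j)}D(j),
\]
where the second product ranges over all morphisms \(\phi\) of \({\bf J}\), the factor attached to \(\phi\colon i\to j\) being \(D(j)\). Writing \(\pi_i\colon P\to D(i)\) and \(\pi_\phi\colon Q\to D(j)\) for the projections, I would use the universal property of \(Q\) to define the unique morphisms \(f,g\colon P\to Q\) characterised by \(\pi_\phi\circ f=\pi_j\) and \(\pi_\phi\circ g=D(\phi)\circ\pi_i\) for every morphism \(\phi\colon i\to j\) in \({\bf J}\), and then let \(e\colon E\to P\) be the equaliser of \(f\) and \(g\).

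It then remains to verify that \(\big(E,\{\pi_i\circ e\}_{i\in{\rm Obj}({\bf J})}\big)\) is a limit of \(D\). It is a cone, since equalising \(f\) and \(g\) forces \(\pi_j\circ e=D(\phi)\circ\pi_i\circ e\) for every \(\phi\colon i\to j\). For the universal property, given any cone \((c,\{\psi_i\}_i)\) the universal property of \(P\) yields a unique \(h\colon c\to P\) with \(\pi_i\circ h=\psi_i\); the compatibility condition defining a cone translates, factor by factor, into the identity \(f\circ h=g\circ h\), so \(h\) factors uniquely as \(e\circ k\) through the equaliser, and \(k\colon c\to E\) is readily seen to be the unique morphism of cones \((c,\psi_\star)\to(E,(\pi_\star\circ e))\). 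Hence \(E\) is terminal in \({\bf Cone}(D)\), proving completeness; the cocompleteness assertion follows dually (equivalently, by applying the statement just proved in the opposite category).

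There is no substantive obstacle here: the argument is a purely formal diagram chase, and the only point demanding a little care is the bookkeeping of the two index sets -- objects of \({\bf J}\) for \(P\), morphisms of \({\bf J}\) for \(Q\) -- together with the remark that smallness of \({\bf J}\) is precisely what makes these index sets legitimate, so that the two products exist under the hypothesis. Since this merely reproves a standard textbook fact (already invoked as \cite[\S V.2 Theorem 1]{MacLane98}), I would keep the write-up concise.
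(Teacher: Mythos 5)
Your argument is correct and is precisely the classical product--equaliser construction that the paper itself defers to via the citation of \cite[\S V.2 Theorem 1]{MacLane98}, so there is nothing to compare against beyond that standard reference. The cone verification, the factorisation through the equaliser, and the dualisation for cocompleteness are all handled properly.
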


Given two functors \(F\colon{\bf A}\to{\bf C}\) and \(G\colon{\bf B}\to{\bf C}\), and a diagram \(D\colon{\bf J}\to{\bf SP}(F,G)\),
it is easy to check that we have two isomorphisms of categories
\[\begin{split}
{\rm I}_{F,G}^{\rm cone}&\colon{\bf SP}(F_{\pi_1\circ D,{\rm cone}},G_{\pi_2\circ D,{\rm cone}})\to{\bf Cone}(D),\\
{\rm I}_{F,G}^{\rm cocone}&\colon{\bf SP}(F_{\pi_1\circ D,{\rm cocone}},G_{\pi_2\circ D,{\rm cocone}})\to{\bf Cocone}(D)
\end{split}\]
satisfying \({\rm I}_{F,G}^{\rm cone}(((c^1,\varphi^1_\star),(c^2,\varphi^2_\star)))=
((c^1,c^2),(\varphi^1_\star,\varphi^2_\star))={\rm I}_{F,G}^{\rm cocone}(((c^1,\varphi^1_\star),(c^2,\varphi^2_\star)))\).
\begin{lemma}\label{lem:limits_SP}
Let \({\bf A}\), \({\bf B}\) and \({\bf C}\) be given categories. Let \(F\colon{\bf A}\to{\bf C}\) and
\(G\colon{\bf B}\to{\bf C}\) be two functors. Let \(D\colon{\bf J}\to{\bf SP}(F,G)\) be a diagram such that
the following conditions are satisfied:
\begin{itemize}
\item[\(\rm i)\)] \(\pi_1\circ D\colon{\bf J}\to{\bf A}\) has a limit \(L_{\bf A}\) (resp.\ a colimit \(C_{\bf A}\)).
\item[\(\rm ii)\)] \(\pi_2\circ D\colon{\bf J}\to{\bf B}\) has a limit \(L_{\bf B}\) (resp.\ a colimit \(C_{\bf B}\)).
\item[\(\rm iii)\)] \(F_{\pi_1\circ D,{\rm cone}}(L_{\bf A})=G_{\pi_2\circ D,{\rm cone}}(L_{\bf B})\)
(resp.\ \(F_{\pi_1\circ D,{\rm cocone}}(C_{\bf A})=G_{\pi_2\circ D,{\rm cocone}}(C_{\bf B})\)).
\end{itemize}
Then it holds that \(D\colon{\bf J}\to{\bf SP}(F,G)\) has limit \({\rm I}_{F,G}^{\rm cone}((L_{\bf A},L_{\bf B}))\)
(resp.\ colimit \({\rm I}_{F,G}^{\rm cocone}((C_{\bf A},C_{\bf B}))\)).
\end{lemma}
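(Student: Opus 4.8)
The plan is to transfer the statement, via the isomorphism of categories \({\rm I}_{F,G}^{\rm cone}\) recorded immediately before the lemma, into a question about \emph{terminal objects} in a strict pullback, and then to verify that terminality by hand. I shall describe the limit case; the colimit assertion is obtained by dualising every step (cones become cocones, limits become colimits, terminal objects become initial objects, and ``collectively monomorphic'' becomes ``collectively epic'').

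First I would observe that condition iii) is precisely what makes the pair \((L_{\bf A},L_{\bf B})\) into an object of \({\bf SP}(F_{\pi_1\circ D,{\rm cone}},G_{\pi_2\circ D,{\rm cone}})\), so that \({\rm I}_{F,G}^{\rm cone}((L_{\bf A},L_{\bf B}))\) is a bona fide cone to \(D\). Since a limit of \(D\) is by definition a terminal object of \({\bf Cone}(D)\), and since \({\rm I}_{F,G}^{\rm cone}\) is an isomorphism of categories (hence preserves and reflects terminal objects), it is enough to prove that \((L_{\bf A},L_{\bf B})\) is terminal in \({\bf SP}(F_{\pi_1\circ D,{\rm cone}},G_{\pi_2\circ D,{\rm cone}})\). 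To this end I would fix an arbitrary object \((K_{\bf A},K_{\bf B})\) of this strict pullback, i.e.\ a cone \(K_{\bf A}\) to \(\pi_1\circ D\) and a cone \(K_{\bf B}\) to \(\pi_2\circ D\) with \(F_{\pi_1\circ D,{\rm cone}}(K_{\bf A})=G_{\pi_2\circ D,{\rm cone}}(K_{\bf B})\), and look for a unique morphism \((K_{\bf A},K_{\bf B})\to(L_{\bf A},L_{\bf B})\). By i) and ii) there are a unique cone morphism \(\alpha\colon K_{\bf A}\to L_{\bf A}\) and a unique cone morphism \(\beta\colon K_{\bf B}\to L_{\bf B}\); hence \((\alpha,\beta)\) is the only conceivable candidate for such a morphism, and its uniqueness is automatic. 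The whole content is therefore \emph{existence}: one must check that \((\alpha,\beta)\) is a morphism of the strict pullback, that is, \(F_{\pi_1\circ D,{\rm cone}}(\alpha)=G_{\pi_2\circ D,{\rm cone}}(\beta)\).

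This compatibility is the main obstacle, and I would argue it as follows. Both \(F_{\pi_1\circ D,{\rm cone}}(\alpha)\) and \(G_{\pi_2\circ D,{\rm cone}}(\beta)\) are morphisms in \({\bf Cone}(F\circ\pi_1\circ D)={\bf Cone}(G\circ\pi_2\circ D)\) — the two diagrams of type \({\bf J}\) in \({\bf C}\) coincide because \(D\) takes values in \({\bf SP}(F,G)\). Their common source is \(F_{\pi_1\circ D,{\rm cone}}(K_{\bf A})=G_{\pi_2\circ D,{\rm cone}}(K_{\bf B})\) (the defining condition on \((K_{\bf A},K_{\bf B})\)), and their common target is \(F_{\pi_1\circ D,{\rm cone}}(L_{\bf A})=G_{\pi_2\circ D,{\rm cone}}(L_{\bf B})\) (condition iii). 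Writing the relations \(\ell_i\circ\alpha=\kappa_i\) that characterise the cone morphism \(\alpha\) — where the \(\ell_i\), \(\kappa_i\) are the legs of \(L_{\bf A}\), \(K_{\bf A}\) — applying \(F\), and performing the symmetric computation for \(\beta\) with \(G\), conditions iii) and the strict-pullback condition on \((K_{\bf A},K_{\bf B})\) yield \(F(\ell_i)\circ F(\alpha)=F(\ell_i)\circ G(\beta)\) for every \(i\in{\rm Obj}({\bf J})\). Since the legs of a limit cone are collectively monomorphic — here applied to the limit cone \(F_{\pi_1\circ D,{\rm cone}}(L_{\bf A})\) — this forces \(F(\alpha)=G(\beta)\), as needed. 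Hence \((L_{\bf A},L_{\bf B})\) is terminal in the strict pullback, so \({\rm I}_{F,G}^{\rm cone}((L_{\bf A},L_{\bf B}))\) is a limit of \(D\); the colimit half of the statement follows by the dual argument, with \({\rm I}_{F,G}^{\rm cocone}\) in place of \({\rm I}_{F,G}^{\rm cone}\).
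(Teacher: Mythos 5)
Your overall route is the same as the paper's: both arguments reduce the statement, via the isomorphism \({\rm I}_{F,G}^{\rm cone}\), to showing that \((L_{\bf A},L_{\bf B})\) is a terminal object of \({\bf SP}(F_{\pi_1\circ D,{\rm cone}},G_{\pi_2\circ D,{\rm cone}})\). You are in fact more explicit than the paper about where the content lies: since the strict pullback is \emph{not} a full subcategory of \({\bf Cone}(\pi_1\circ D)\times{\bf Cone}(\pi_2\circ D)\), terminality of \((L_{\bf A},L_{\bf B})\) in that product (immediate from i) and ii)) does not by itself give terminality in the strict pullback; one must verify that the unique candidate \((\alpha,\beta)\) satisfies \(F_{\pi_1\circ D,{\rm cone}}(\alpha)=G_{\pi_2\circ D,{\rm cone}}(\beta)\). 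The paper's proof passes over exactly this point with the words ``a fortiori''; you correctly isolate it as the main obstacle.

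However, your resolution of that obstacle has a genuine gap. From \(F(\ell_i)\circ F(\alpha)=F(\ell_i)\circ G(\beta)\) for all \(i\) you conclude \(F(\alpha)=G(\beta)\) by invoking the joint monicity of the legs of ``the limit cone \(F_{\pi_1\circ D,{\rm cone}}(L_{\bf A})\)''. But \(F\) is an arbitrary functor, not assumed to preserve limits, so \(F_{\pi_1\circ D,{\rm cone}}(L_{\bf A})\) need not be a limit cone in \({\bf C}\) and its legs need not be jointly monic. This cannot be repaired without an extra hypothesis: take \({\bf J}=\varnothing\), let \({\bf A}={\bf B}\) be the category with one non-identity arrow \(u\colon x\to t\) (so \(t\) is terminal), let \({\bf C}\) have two objects \(X,T\) and two parallel non-identity arrows \(p,q\colon X\to T\), and let \(F,G\) agree on objects while \(F(u)=p\) and \(G(u)=q\). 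Then i)--iii) hold with \(L_{\bf A}=L_{\bf B}=t\), yet there is no morphism \((x,x)\to(t,t)\) in \({\bf SP}(F,G)\), so \((t,t)\) is not terminal there and the conclusion of the lemma fails. (The same objection applies to the paper's ``a fortiori''.) In the one place the lemma is applied, namely Theorem \ref{thm:PreExtPMetTop_bicompl}, \(F\) and \(G\) are the forgetful functors to \({\bf Set}\) and the (co)limit cones described in Section \ref{s:lim_Set_Top_ExtPMet} are carried to (co)limit cones in \({\bf Set}\), so there your jointly-monic (resp.\ jointly-epic) argument does close the gap; but as a proof of the lemma as literally stated it does not go through, and the statement itself needs a preservation hypothesis of this kind.
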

\begin{proof}
We discuss only limits, as the argument for colimits is similar. Since \(L_{\bf A}\) and \(L_{\bf B}\)
are terminal objects of \({\bf Cone}(\pi_1\circ D)\) and \({\bf Cone}(\pi_2\circ D)\), respectively, it is clear that
\((L_{\bf A},L_{\bf B})\) is a terminal object of \({\bf Cone}(\pi_1\circ D)\times{\bf Cone}(\pi_2\circ D)\).
Given that \((L_{\bf A},L_{\bf B})\in{\rm Obj}({\bf SP}(F_{\pi_1\circ D,{\rm cone}},G_{\pi_2\circ D,{\rm cone}}))\)
by iii), we have a fortiori that \((L_{\bf A},L_{\bf B})\) is a terminal object of
\({\bf SP}(F_{\pi_1\circ D,{\rm cone}},G_{\pi_2\circ D,{\rm cone}})\). By recalling that \({\rm I}_{F,G}^{\rm cone}\)
is an isomorphism of categories, we conclude that \({\rm I}_{F,G}^{\rm cone}((L_{\bf A},L_{\bf B}))\) is a terminal
object of \({\bf Cone}(D)\), or in other words that it is a limit of the diagram \(D\colon{\bf J}\to{\bf SP}(F,G)\).
\end{proof}
\subsubsection*{Reflective and coreflective subcategories}
Let \({\bf A}\), \({\bf B}\) be categories. Let \(F\colon{\bf A}\to{\bf B}\) and \(G\colon{\bf B}\to{\bf A}\) be given functors.
Then we say that \(G\) is the \emph{left adjoint} to \(F\) (or \(F\) is the \emph{right adjoint} to \(G\)),
\[
G\dashv F
\]
for short, if for any \(a\in{\rm Obj}({\bf A})\) there exists a morphism \(\varepsilon_a\colon G(F(a))\to a\) in \({\bf A}\) such that
the following property holds: given any \(b\in{\rm Obj}({\bf B})\) and any morphism \(\varphi\colon G(b)\to a\) in \({\bf A}\),
there exists a unique morphism \(\psi\colon b\to F(a)\) in \({\bf B}\) such that
\[\begin{tikzcd}
G(b) \arrow[d,swap,"G(\psi)"] \arrow[dr,"\varphi"] & \\
G(F(a)) \arrow[r,swap,"\varepsilon_a"] & a
\end{tikzcd}\]
is a commutative diagram. If in addition \(F\colon{\bf A}\to{\bf B}\) (resp.\ \(G\colon{\bf B}\to{\bf A}\)) is a fully faithful functor,
then we say that \(F\) exhibits \({\bf A}\) as a \emph{reflective subcategory} of \({\bf B}\) (resp.\ \(G\) exhibits \({\bf B}\)
as a \emph{coreflective subcategory} of \({\bf A}\)), with \emph{reflector} \(G\) (resp.\ with \emph{coreflector} \(F\)).
For a proof of the following result, we refer to \cite[Proposition 4.5.15]{riehl2017category}:
\begin{proposition}\label{prop:create_lim}
Let \({\bf A}\), \({\bf B}\) be given categories. Assume \(F\colon{\bf A}\to{\bf B}\) is a functor that exhibits \({\bf A}\)
as a reflective (resp.\ coreflective) subcategory of \({\bf B}\), with reflector (resp.\ coreflector) \(G\). Then:
\begin{itemize}
\item[\(\rm i)\)] If \(D\colon{\bf J}\to{\bf A}\) is a diagram such that \(F\circ D\colon{\bf J}\to{\bf B}\) has a limit \(L\) (resp.\ a colimit \(C\)),
then \(D\) has a limit \(\tilde L\) (resp.\ a colimit \(\tilde C\)) and \(F_{D,{\rm cone}}(\tilde L)=L\) (resp.\ \(F_{D,{\rm cocone}}(\tilde C)=C\)).
\item[\(\rm ii)\)] If \(D\colon{\bf J}\to{\bf A}\) is a diagram such that \(F\circ D\colon{\bf J}\to{\bf B}\) has a colimit \(C\) (resp.\ a limit \(L\)),
then \(D\) has colimit \(G_{D,{\rm cocone}}(C)\) (resp.\ has limit \(G_{D,{\rm cone}}(L)\)).
\end{itemize}
\end{proposition}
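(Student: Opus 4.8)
The plan is to deduce the statement from two standard facts: left adjoints preserve colimits while right adjoints preserve limits, and a reflective subcategory is closed under all limits that exist in the ambient category (equivalently, the inclusion creates such limits). It suffices to treat the reflective case, since the coreflective case follows from it by passing to opposite categories — there "reflective" becomes "coreflective" and "limit" becomes "colimit", and conversely. So assume $F\colon{\bf A}\to{\bf B}$ is fully faithful, $G\colon{\bf B}\to{\bf A}$, and $G\dashv F$; let $\epsilon\colon GF\Rightarrow{\rm id}_{\bf A}$ be the counit — a natural isomorphism, since $F$ is fully faithful — and $\eta\colon{\rm id}_{\bf B}\Rightarrow FG$ the unit. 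Fix $D\colon{\bf J}\to{\bf A}$.

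Part ii) is the easy half. As $G$ is a left adjoint it preserves colimits; hence if $F\circ D$ has colimit $C$ in ${\bf B}$ then $G_{F\circ D,{\rm cocone}}(C)$ is a colimit of $G\circ F\circ D$ in ${\bf A}$, and precomposing its legs with the components of the natural isomorphism $\epsilon$ (which identifies the diagram $G\circ F\circ D$ with $D$) transports it to a colimit of $D$; up to this identification $G\circ F\circ D\cong D$ it is the cocone written $G_{D,{\rm cocone}}(C)$ in the statement. The parenthetical (coreflective) version of ii) is dual, using that a right adjoint preserves limits.

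Part i) is the substantive half. Let $(L,\lambda_\star)$ be a limit of $F\circ D$ in ${\bf B}$; the crux is to show that $L$ lies in the essential image of $F$, i.e.\ that $\eta_L\colon L\to FG(L)$ is an isomorphism. For each $i$ put $\bar\lambda_i\coloneqq\epsilon_{D(i)}\circ G(\lambda_i)\colon G(L)\to D(i)$, the $(G\dashv F)$-transpose of $\lambda_i$; naturality of the adjunction bijections turns the cone condition for $\lambda_\star$ into the cone condition for $\bar\lambda_\star$, so $(FG(L),F(\bar\lambda_\star))$ is a cone over $F\circ D$ and the universal property of $(L,\lambda_\star)$ gives a unique $r\colon FG(L)\to L$ with $\lambda_i\circ r=F(\bar\lambda_i)$ for all $i$. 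From the transpose identity $F(\bar\lambda_i)\circ\eta_L=\lambda_i$ we get $\lambda_i\circ(r\circ\eta_L)=\lambda_i$ for every $i$, so $r\circ\eta_L={\rm id}_L$ by uniqueness in the universal property. For the other composite, write $\eta_L\circ r=F(s)$ for the unique $s\colon G(L)\to G(L)$ given by full faithfulness; then $F(s)\circ\eta_L=\eta_L\circ r\circ\eta_L=\eta_L=F({\rm id}_{G(L)})\circ\eta_L$, and the universal property of the unit $\eta_L$ forces $s={\rm id}_{G(L)}$, hence $\eta_L\circ r={\rm id}_{FG(L)}$. Thus $\eta_L$ is an isomorphism.

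Granting $L\cong FG(L)$, one then checks that $\tilde L\coloneqq(G(L),\bar\lambda_\star)$ is a limit of $D$ in ${\bf A}$: for $a\in{\rm Obj}({\bf A})$ full faithfulness of $F$ identifies ${\rm Hom}_{\bf A}(a,G(L))$ with ${\rm Hom}_{\bf B}(F(a),FG(L))$, the isomorphism $\eta_L$ identifies this with ${\rm Hom}_{\bf B}(F(a),L)$, the limit $(L,\lambda_\star)$ identifies the latter with the set of cones over $F\circ D$ with apex $F(a)$, and full faithfulness of $F$ again identifies those with the cones over $D$ with apex $a$, all naturally in $a$; tracing ${\rm id}_{G(L)}$ through this chain exhibits the associated universal cone as $\bar\lambda_\star$. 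Finally $F$, being a right adjoint, preserves limits, so $F_{D,{\rm cone}}(\tilde L)$ is again a limit of $F\circ D$ and hence equals $L$ once limits are identified; the parenthetical (coreflective) version of i) is dual. The single real obstacle is the isomorphism-of-$\eta_L$ step — that the limit $L$ of $F\circ D$ is already reflected into ${\bf A}$ — everything else being formal juggling of adjunction transposes and of universal properties.
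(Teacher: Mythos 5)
Your proof is correct. Note that the paper does not actually prove this proposition: it is stated with a citation to \cite[Proposition 4.5.15]{riehl2017category}, and your argument is essentially the standard one found there --- reduce the coreflective case to the reflective one by duality, get part ii) from the fact that the reflector \(G\), being a left adjoint, preserves colimits (transporting along the counit isomorphism \(GF\cong{\rm id}\)), and for part i) show that the unit \(\eta_L\) at the limit \(L\) of \(F\circ D\) is invertible by playing the uniqueness clause of the limit against the uniqueness clause of the unit, then conclude by full faithfulness. The only point worth flagging is one you already acknowledge: the literal equality \(F_{D,{\rm cone}}(\tilde L)=L\) in the statement holds on the nose only when the subcategory is strictly closed under the relevant isomorphisms (as it is in all the applications in the paper, e.g.\ \({\bf ExtMetTop}\subseteq{\bf PreExt\Psi MetTop}\)); in general your construction yields \(F_{D,{\rm cone}}(\tilde L)=(FG(L),F(\bar\lambda_\star))\), which is identified with \((L,\lambda_\star)\) via the canonical isomorphism \(\eta_L\). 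This is a feature of how the statement is phrased, not a gap in your argument.
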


The property in i) is typically formulated by saying that \(F\) `creates' all limits (resp.\ colimits) that \({\bf B}\) admits.
A useful, immediate consequence of Proposition \ref{prop:create_lim} is the following result:
\begin{corollary}\label{cor:compl_preserved}
Let \({\bf A}\), \({\bf B}\) be categories such that \({\bf A}\) is either a reflective or a coreflective subcategory of \({\bf B}\).
Assume \({\bf B}\) is complete (resp.\ cocomplete). Then \({\bf A}\) is complete (resp.\ cocomplete).
\end{corollary}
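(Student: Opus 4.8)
The plan is to deduce the statement directly from Proposition \ref{prop:create_lim} by a short case analysis according to whether \({\bf A}\) is reflective or coreflective in \({\bf B}\), and according to which of completeness/cocompleteness is assumed of \({\bf B}\). Recall that, in order to prove that \({\bf A}\) is complete (resp.\ cocomplete), it suffices to exhibit, for every diagram \(D\colon{\bf J}\to{\bf A}\) indexed by a small category \({\bf J}\), a limit (resp.\ a colimit) of \(D\) in \({\bf A}\); one could alternatively appeal to Theorem \ref{thm:exist_thm_lim} and restrict attention to products/equalisers (resp.\ coproducts/coequalisers), but this brings no simplification here.

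First I would fix a fully faithful functor \(F\colon{\bf A}\to{\bf B}\) exhibiting \({\bf A}\) as a reflective (resp.\ coreflective) subcategory of \({\bf B}\), with reflector (resp.\ coreflector) \(G\), together with an arbitrary small diagram \(D\colon{\bf J}\to{\bf A}\). Then \(F\circ D\colon{\bf J}\to{\bf B}\) is a small diagram in \({\bf B}\), to which the completeness or cocompleteness hypothesis on \({\bf B}\) applies.

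In the reflective case: if \({\bf B}\) is complete, then \(F\circ D\) has a limit and Proposition \ref{prop:create_lim}\,i) produces a limit of \(D\); if \({\bf B}\) is cocomplete, then \(F\circ D\) has a colimit \(C\) and Proposition \ref{prop:create_lim}\,ii) produces the colimit \(G_{D,{\rm cocone}}(C)\) of \(D\). In the coreflective case the `resp.'\ branches of Proposition \ref{prop:create_lim} apply instead: if \({\bf B}\) is complete, part ii) (in its `resp.'\ form) yields the limit \(G_{D,{\rm cone}}(L)\) of \(D\) from a limit \(L\) of \(F\circ D\); if \({\bf B}\) is cocomplete, part i) (in its `resp.'\ form) yields a colimit of \(D\). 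In each of the four combinations the relevant (co)limit of \(D\) exists for every small \({\bf J}\), whence \({\bf A}\) is complete (resp.\ cocomplete), as claimed.

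Since Proposition \ref{prop:create_lim} carries the entire argument, there is no genuine obstacle; the only point demanding a little care is the bookkeeping of the four combinations of hypotheses and the matching of each one to the correct clause — and the correct `resp.'\ branch — of Proposition \ref{prop:create_lim}.
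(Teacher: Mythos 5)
Your argument is correct and is exactly the intended one: the paper states this corollary as an immediate consequence of Proposition \ref{prop:create_lim} without further proof, and your four-way case analysis (matching reflective/coreflective to the `resp.' branches and complete/cocomplete to clauses i)/ii)) is the correct bookkeeping that makes that deduction explicit.
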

\section{Equivalent definitions of extended metric-topological space}\label{app:equiv_emt}
It is well known that Tychonoff spaces admit several equivalent characterisations. For instance,
given a Hausdorff space \((\X,\tau)\), the following conditions are equivalent:
\begin{itemize}
\item \((\X,\tau)\) is completely regular (thus, a Tychonoff space),
\item \(\tau\) is the initial topology of \(C_b(\X,\tau)\),
\item \(\tau\) is induced by a family of pseudodistances on \(\X\) (or, equivalently, \((\X,\tau)\)
is uniformisable),
\item \(i\colon\X\to\beta\X\) is a homeomorphism onto its image,
\item \((\X,\tau)\) is homeomorphic to a subset of some compact Hausdorff space.
\end{itemize}
In the next result, we obtain several characterisations of an e.m.t.\ space that are akin to the above
equivalent reformulations of being a Tychonoff space (and, in fact, Theorem \ref{thm:alt_char_emt} implies
the above characterisations of Tychonoff spaces, by taking as \(\sfd\) the \(\infty\)-discrete distance
\(\sfd_{\infty\text{-discr}}^\X\) on \(\X\)).
\medskip

Let us introduce a notation that we will use in the next result. Given a pre-e.m.t.\ space \((\X,\tau,\sfd)\),
a set \(C\subseteq\X\) and a point \(\bar x\in\X\), we define the quantity \(\sfd(\bar x,C)\in[0,+\infty]\) as
\[
\sfd(\bar x,C)\coloneqq\inf\{\sfd(\bar x,y)\;|\;y\in C\}.
\]
Note that if \((\X,\tau,\sfd)\) is an e.m.t.\ space, \(C\) is \(\tau\)-closed and \(\bar x\notin C\),
then \(\sfd(\bar x,C)>0\). Indeed, \(C\) is in particular \(\sfd\)-closed (as \(\tau\) is coarser than
the topology induced by \(\sfd\)), which gives \(\sfd(\bar x,C)\neq 0\).
\begin{theorem}[Alternative characterisations of an e.m.t.\ space]\label{thm:alt_char_emt}
Let \((\X,\tau,\sfd)\) be a pre-e.m.t.\ space such that \((\X,\tau)\) is Hausdorff. Then the following conditions are equivalent:
\begin{itemize}
\item[\(\rm i)\)] \((\X,\tau,\sfd)\) is an e.m.t.\ space.
\item[\(\rm ii)\)] Given a pre-e.pm.t.\ space \((\Y,\tau_\Y,\sfd_\Y)\) and a map
\(\varphi\colon\Y\to\X\) with \(f\circ\varphi\in\LIP_{b,1}(\Y,\tau_\Y,\sfd_\Y)\) for every
\(f\in\LIP_{b,1}(\X,\tau,\sfd)\), it holds that \(\varphi\colon(\Y,\tau_\Y,\sfd_\Y)\to(\X,\tau,\sfd)\) is continuous-short.
\item[\(\rm iii)\)] Given a \(\tau\)-closed set \(C\subseteq\X\) and a point \(\bar x\in\X\setminus C\), there exists a function \(f\in\LIP_{b,1}(\X,\tau,\sfd)\)
such that \(f|_C=0\) and \(f(\bar x)>0\), and for any \(\tau\)-compact set \(K\subseteq\X\) it holds that each function
in \(\LIP_{b,1}(K,\tau\llcorner K,\sfd|_{K\times K})\) can be extended to a function in \(\LIP_{b,1}(\X,\tau,\sfd)\).
\item[\(\rm iv)\)] Given a \(\tau\)-closed set \(C\subseteq\X\) and a point \(\bar x\in\X\setminus C\), there exists a function \(f\in\LIP_{b,1}(\X,\tau,\sfd)\)
such that \(f|_C=0\) and \(f(\bar x)>0\), and if in addition the set \(C\) is \(\tau\)-compact, then for any \(\lambda\in(0,\sfd(\bar x,C)]\cap(0,+\infty)\)
the function \(f\) can be chosen so that \(f(\bar x)=\lambda\).
\item[\(\rm v)\)] There exists a set \(\Lambda\) of bounded pseudodistances on \(\X\) such that \(\tau\) is the initial
topology of the family \(\{\delta(\cdot,x):\delta\in\Lambda,\,x\in\X\}\), and
\(\sfd(x,y)=\sup\{\delta(x,y):\delta\in\Lambda\}\) for every \(x,y\in\X\).
\item[\(\rm vi)\)] \({\sf c}_{(\X,\tau,\sfd)}\colon(\X,\tau,\sfd)\to{\sf emt}((\X,\tau,\sfd))\) is an isomorphism
in \({\bf PreExt\Psi MetTop}\).
\item[\(\rm vii)\)] \(\bar\iota_{(\X,\tau,\sfd)}\colon(\X,\tau,\sfd)\to\bar\gamma((\X,\tau,\sfd))\) is an
embedding of pre-e.m.t.\ spaces.
\item[\(\rm viii)\)] There exists an embedding of pre-e.m.t.\ spaces of \((\X,\tau,\sfd)\) into a compact e.m.t.\ space.
\end{itemize}
\end{theorem}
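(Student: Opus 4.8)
The plan is to show that all eight conditions are equivalent by proving the implications $\rm i)\Rightarrow ii)\Rightarrow i)$, $\rm i)\Rightarrow iii)\Rightarrow iv)\Rightarrow i)$, $\rm i)\Rightarrow v)\Rightarrow i)$ and $\rm i)\Rightarrow vi)\Rightarrow vii)\Rightarrow viii)\Rightarrow i)$, so that every statement is tied back to $\rm i)$. For $\rm i)\Rightarrow ii)$: continuity of $\varphi$ is immediate from Remark \ref{rmk:charact_initial_top} applied to the hypothesis $f\circ\varphi\in\LIP_{b,1}(\Y,\tau_\Y,\sfd_\Y)$, and shortness of $\varphi$ follows by taking the supremum over $f\in\LIP_{b,1}(\X,\tau,\sfd)$ in the $\tau$-recovery identity for $\sfd(\varphi(x),\varphi(y))$, using $|f(\varphi(x))-f(\varphi(y))|\le\sfd_\Y(x,y)$. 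For $\rm ii)\Rightarrow i)$ I would apply $\rm ii)$ to the identity map $\X\to\X$ twice: once with source $(\X,\tau,\tilde\sfd)$, where $\tilde\sfd(x,y)\coloneqq\sup\{|f(x)-f(y)|:f\in\LIP_{b,1}(\X,\tau,\sfd)\}$ — every such $f$ being $\tilde\sfd$-short, $\rm ii)$ forces ${\rm id}\colon(\X,\tau,\tilde\sfd)\to(\X,\tau,\sfd)$ to be short, i.e.\ $\sfd\le\tilde\sfd$, hence $\sfd$ can be $\tau$-recovered since $\tilde\sfd\le\sfd$ always; and once with source $(\X,\tau_0,\sfd)$, where $\tau_0$ is the initial topology of $\LIP_{b,1}(\X,\tau,\sfd)$ — then $\rm ii)$ forces $\tau\subseteq\tau_0$, and as $\tau_0\subseteq\tau$ trivially, $\tau=\tau_0$.

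For $\rm i)\Rightarrow iii)$ the extension clause is exactly Theorem \ref{thm:ext_CS}; for the separation clause, given a $\tau$-closed $C$ and $\bar x\notin C$, I would use that $\X\setminus C$ is open in the initial topology of $\LIP_{b,1}(\X,\tau,\sfd)$ to pick $f_1,\dots,f_n\in\LIP_{b,1}(\X,\tau,\sfd)$ and $\eps_i>0$ with $\bigcap_i\{|f_i(\cdot)-f_i(\bar x)|<\eps_i\}\subseteq\X\setminus C$, set $g\coloneqq\max_i\big(|f_i(\cdot)-f_i(\bar x)|-\tfrac{\eps_i}{2}\big)^+$ and $\delta\coloneqq\tfrac12\min_i\eps_i>0$, and take $f\coloneqq(\delta-g)^+$; stability of $\LIP_{b,1}(\X,\tau,\sfd)$ under $|\cdot|$, finite $\max$, subtraction of constants and $t\mapsto t^+$ gives $f\in\LIP_{b,1}(\X,\tau,\sfd)$, and one checks $f(\bar x)=\delta>0$ and $f|_C=0$. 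The implication $\rm iii)\Rightarrow iv)$ is essentially free: the separation clause is the same statement, while for $\tau$-compact $C$ the refinement $f(\bar x)=\lambda$, $\lambda\in(0,\sfd(\bar x,C)]\cap(0,+\infty)$, follows by extending, via the extension clause of $\rm iii)$, the function on the $\tau$-compact set $C\cup\{\bar x\}$ equal to $0$ on $C$ and $\lambda$ at $\bar x$ — it lies in $\LIP_{b,1}(C\cup\{\bar x\},\dots)$ precisely because $\lambda\le\sfd(\bar x,C)$ and, by Hausdorffness, both $C$ and $\{\bar x\}$ are clopen in $C\cup\{\bar x\}$. For $\rm iv)\Rightarrow i)$ I would take $C=\{y\}$ (closed and compact since $\tau$ is Hausdorff) to obtain, for every $\lambda<\sfd(x,y)$, an $f\in\LIP_{b,1}(\X,\tau,\sfd)$ with $|f(x)-f(y)|=\lambda$, whence $\sfd$ is $\tau$-recoverable; and take $C=\X\setminus U$ for $U\in\tau$ to obtain, for $\bar x\in U$, an $f\in\LIP_{b,1}(\X,\tau,\sfd)$ with $\{|f|>0\}$ a neighbourhood of $\bar x$ contained in $U$, whence $\tau$ equals the initial topology of $\LIP_{b,1}(\X,\tau,\sfd)$.

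The equivalence $\rm i)\Leftrightarrow v)$ goes through $\Lambda=\{\delta_f:f\in\LIP_{b,1}(\X,\tau,\sfd)\}$ with $\delta_f(x,y)=|f(x)-f(y)|$: forwards one uses $\tau$-recovery of $\sfd$ and the standard fact that the initial topology of a family of real maps coincides with the initial topology of the associated ``distance-to-a-point'' maps; conversely each $\delta(\cdot,x)$ with $\delta\in\Lambda$ belongs to $\LIP_{b,1}(\X,\tau,\sfd)$, so the initial topologies of $\{\delta(\cdot,x):\delta\in\Lambda,\,x\in\X\}$ and of $\LIP_{b,1}(\X,\tau,\sfd)$ both equal $\tau$, and $\delta(x,y)=|\delta(\cdot,y)(x)-\delta(\cdot,y)(y)|\le\sup_f|f(x)-f(y)|$ gives $\tau$-recovery of $\sfd$. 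Finally, $\rm i)\Rightarrow vi)$ follows from Remark \ref{rmk:emt_recover} — so ${\sf c}_{(\X,\tau,\sfd)}$ is a distance-preserving bijection (here $\sfd$ being an extended metric makes the quotient trivial) — together with $\check\tau$ being the initial topology of $\LIP_{b,1}(\X,\tau,\sfd)$, which is $\tau$ by $\rm i)$, so ${\sf c}_{(\X,\tau,\sfd)}$ is an isomorphism in ${\bf PreExt\Psi MetTop}$; $\rm vi)\Rightarrow vii)$ because $\bar\iota_{(\X,\tau,\sfd)}=\iota_{{\sf emt}((\X,\tau,\sfd))}\circ{\sf c}_{(\X,\tau,\sfd)}$ is then a composite of an isomorphism with the embedding provided by Theorem \ref{thm:compactif_emt} ii), hence an embedding into the compact e.m.t.\ space $\bar\gamma((\X,\tau,\sfd))$; $\rm vii)\Rightarrow viii)$ is immediate since $\bar\gamma((\X,\tau,\sfd))$ is a compact e.m.t.\ space; and $\rm viii)\Rightarrow i)$ because an embedding into a compact e.m.t.\ space realises $(\X,\tau,\sfd)$ as (isomorphic, in ${\bf PreExt\Psi MetTop}$, to) one of its subspaces, which is an e.m.t.\ space by Remark \ref{rmk:sub_emt}, and the defining properties of an e.m.t.\ space are invariant under distance-preserving homeomorphisms. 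I expect the only genuinely substantive point to be the construction in $\rm i)\Rightarrow iii)$ of a function that is \emph{simultaneously} $\tau$-continuous, bounded and $\sfd$-short and separates $\bar x$ from $C$ — the subtlety being that complete regularity of $(\X,\tau)$ alone does not supply $\sfd$-short test functions — whereas the extension half of $\rm iii)$ is imported wholesale from Theorem \ref{thm:ext_CS} and the remaining implications are routine bookkeeping.
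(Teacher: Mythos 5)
Your proposal is correct: the implication graph you set up (every condition tied back to i) through closed cycles) is logically complete, and each individual step checks out. Several of your arguments, however, take a genuinely different route from the paper's. For ii)\(\Rightarrow\)i) the paper tests ii) against an auxiliary two-point space with a tailor-made pseudodistance (for the recovery of \(\sfd\)) and against an arbitrary topological space carrying the \(\infty\)-discrete distance (for the topology), whereas you apply ii) to the identity of \(\X\) equipped with the modified structures \((\X,\tau,\tilde\sfd)\) and \((\X,\tau_0,\sfd)\); your version is shorter and equally valid. The largest divergence is in i)\(\Rightarrow\)iii): the paper passes to the compactification \((\gamma\X,\gamma\tau,\gamma\sfd)\) of Theorem \ref{thm:compactif_emt} and invokes Theorem \ref{thm:ext_CS} on the compact set \(K\cup\{\iota(\bar x)\}\) there, while you build the separating function by hand from a basic initial-topology neighbourhood of \(\bar x\) using \(|\cdot|\), finite maxima, subtraction of constants and positive parts -- all operations under which \(\LIP_{b,1}(\X,\tau,\sfd)\) is stable -- which is more elementary and self-contained, and correctly isolates the one genuinely delicate point (producing a test function that is \(\sfd\)-short, not merely continuous). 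For viii)\(\Rightarrow\)i) you go through Remark \ref{rmk:sub_emt} together with the invariance of the e.m.t.\ axioms under isomorphisms of \({\bf PreExt\Psi MetTop}\), where the paper instead reruns its i)\(\Rightarrow\)iii) argument to obtain viii)\(\Rightarrow\)iii) and closes the loop through iv); your route is cleaner, at the mild cost of having to record the isomorphism-invariance explicitly. Finally, in i)\(\Rightarrow\)v) the paper takes \(\Lambda\) to consist of the finite maxima \(\delta_F\), while your singleton family \(\{\delta_f\}\) works just as well because the initial topology already admits finite intersections of subbasic sets. The remaining implications coincide with the paper's.
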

\begin{proof}
\ \\
\({\bf i)}\Longrightarrow{\bf ii)}\) Let \((\Y,\tau_\Y,\sfd_\Y)\) be a pre-e.pm.t.\ space and \(\varphi\colon\Y\to\X\)
a map with \(f\circ\varphi\in\LIP_{b,1}(\Y,\tau_\Y,\sfd_\Y)\) for all \(f\in\LIP_{b,1}(\X,\tau,\sfd)\). Since \(\tau\)
coincides with the initial topology of \(\LIP_{b,1}(\X,\tau,\sfd)\), we deduce that \(\varphi\colon(\Y,\tau_\Y)\to(\X,\tau)\)
is continuous. Moreover, for any \(y,z\in\Y\) we have that
\[
\sfd(\varphi(y),\varphi(z))=\sup\big\{|(f\circ\varphi)(y)-(f\circ\varphi)(z)|\;\big|\;f\in\LIP_{b,1}(\X,\tau,\sfd)\big\}
\leq\sfd_\Y(y,z),
\]
which shows that \(\varphi\colon(\Y,\sfd_\Y)\to(\X,\sfd)\) is a short map. Consequently, ii) is proved.
\smallskip

\noindent\({\bf ii)}\Longrightarrow{\bf i)}\) First, let us check that \(\tau\) is the initial topology of \(\LIP_{b,1}(\X,\tau,\sfd)\).
Recalling Remark \ref{rmk:charact_initial_top}, it suffices to show that if \((\Y,\tau_\Y)\) is a topological space
and \(\varphi\colon\Y\to\X\) is a map such that \(f\circ\varphi\colon(\Y,\tau_\Y)\to(\X,\tau)\) is continuous for
every \(f\in\LIP_{b,1}(\X,\tau,\sfd)\), then \(\varphi\colon(\Y,\tau_\Y)\to(\X,\tau)\) is continuous. We equip \(\Y\)
with the \(\infty\)-discrete distance \(\sfd_\Y\coloneqq\sfd^\Y_{\infty\text{-discr}}\).
Clearly, any map from \((\Y,\tau_\Y,\sfd_\Y)\) to \(\R\) is short, thus in particular
\(f\circ\varphi\in\LIP_{b,1}(\Y,\tau_\Y,\sfd_\Y)\) for every \(f\in\LIP_{b,1}(\X,\tau,\sfd)\), and accordingly
\(\varphi\colon(\Y,\tau_\Y)\to(\X,\tau)\) is continuous by ii). Next, let us prove (via a contradiction argument)
that \(\sfd\) can be \(\tau\)-recovered. Suppose that \(\sfd(x,y)>s_{xy}\) for some \(x,y\in\X\), where we set
\(s_{xy}\coloneqq\sup\{|f(x)-f(y)|:f\in\LIP_{b,1}(\X,\tau,\sfd)\}\). We equip \({\rm Z}\coloneqq\{x,y\}\) with the discrete topology
\(\tau_{\rm Z}\) and the unique pseudodistance \(\sfd_{\rm Z}\) satisfying \(\sfd_{\rm Z}(x,y)=s_{xy}\). Also, we denote by \(\varphi\colon{\rm Z}\to\X\)
the inclusion map. For any \(f\in\LIP_{b,1}(\X,\tau,\sfd)\), the function \(f\circ\varphi\colon({\rm Z},\tau_{\rm Z})\to\R\)
is clearly continuous, and we have \(|(f\circ\varphi)(x)-(f\circ\varphi)(y)|\leq s_{xy}<\sfd(x,y)\), so that
\(f\circ\varphi\in\LIP_{b,1}({\rm Z},\tau_{\rm Z},\sfd_{\rm Z})\). However, the fact that
\(\sfd(\varphi(x),\varphi(y))=\sfd(x,y)>s_{xy}=\sfd_{\rm Z}(x,y)\) shows that \(\varphi\colon({\rm Z},\sfd_{\rm Z})\to(\X,\sfd)\)
is not a short map, contradicting ii). All in all, \((\X,\tau,\sfd)\) is an e.m.t.\ space, thus proving i).
\smallskip

\noindent\({\bf i)}\Longrightarrow{\bf iii)}\) By Theorem \ref{thm:compactif_emt} ii), we know that \(\iota(C)\) is a closed subset
of \(\iota(\X)\) equipped with the relative topology of \(\gamma\tau\). Letting \(K\subseteq\gamma\X\) be the
\(\gamma\tau\)-closure of \(\iota(C)\), it is then easy to check that \(\iota(C)=K\cap\iota(\X)\). Since
\((\gamma\X,\gamma\tau)\) is compact, we deduce that the set \(K\) is \(\gamma\tau\)-compact. Note also
that \(\iota(\bar x)\notin K\) and \((\gamma\sfd)(\iota(\bar x),K)>0\). Fix \(\lambda\in(0,(\gamma\sfd)(\iota(\bar x),K))\).
Define \(g\colon K\cup\{\iota(\bar x)\}\to\{0,\lambda\}\) as
\[
g(y)\coloneqq\left\{\begin{array}{ll}
0\\
\lambda
\end{array}\quad\begin{array}{ll}
\text{ if }y\in K,\\
\text{ if }y=\iota(\bar x).
\end{array}\right.
\]
Since \(K\cup\{\iota(\bar x)\}\) is \(\gamma\tau\)-compact and \(g\) is a bounded continuous-short function,
we know from Theorem \ref{thm:ext_CS} that there exists an extension \(\bar g\in\LIP_{b,1}(\gamma\X,\gamma\tau,\gamma\sfd)\)
of \(g\). Finally, let us define \(f\colon\X\to\R\) as \(f(z)\coloneqq\bar g(\iota(z))\) for every \(z\in\X\). Recalling the
properties of \(\iota\) stated in Theorem \ref{thm:compactif_emt}, it is easy to check that \(f\in\LIP_{b,1}(\X,\tau,\sfd)\).
Moreover, we have \(f(z)=\bar g(\iota(z))=0\) for every \(z\in C\) (as \(\iota(z)\in\iota(C)\subseteq K\))
and \(f(\bar x)=\bar g(\iota(\bar x))=\lambda>0\), thus obtaining the first statement in iii). The second statement
follows directly from Theorem \ref{thm:ext_CS}. Hence, iii) is proved. Note that in the above argument
we have not used the universal property of the compactification \((\gamma\X,\gamma\tau,\gamma\sfd)\),
but only the fact that \((\X,\tau,\sfd)\) can be embedded as a pre-e.m.t.\ space into a compact e.m.t.\ space.
\smallskip

\noindent\({\bf iii)}\Longrightarrow{\bf iv)}\) Let \(K\subseteq\X\) be a \(\tau\)-compact set and \(\bar x\in\X\setminus K\).
Note that \(\sfd(\bar x,K)>0\). Now, fix any \(\lambda\in(0,\sfd(\bar x,K)]\cap(0,+\infty)\). Define
\(\tilde f\colon K\cup\{\bar x\}\to\{0,\lambda\}\) as \(\tilde f(x)\coloneqq 0\) for every \(x\in K\) and \(\tilde f(\bar x)\coloneqq\lambda\).
Since \(K\cup\{\bar x\}\) is \(\tau\)-compact and \(\tilde f\) is continuous-short, it follows from
iii) that there exists an extension \(f\in\LIP_{b,1}(\X,\tau,\sfd)\) of \(\tilde f\). Then \(f|_K=0\)
and \(f(\bar x)=\lambda\). Thus, iv) is proved.
\smallskip

\noindent\({\bf iv)}\Longrightarrow{\bf i)}\) Fix any \(U\in\tau\) and \(\bar x\in U\). Using iv), we find a function \(f\in\LIP_{b,1}(\X,\tau,\sfd)\)
such that \(f|_{\X\setminus U}=0\) and \(f(\bar x)\neq 0\). In particular, we have \(x\in\{f\neq 0\}\subseteq U\). Since
\(\{f\neq 0\}=f^{-1}(\R\setminus\{0\})\) belongs to the initial topology \(\sigma\) of \(\LIP_{b,1}(\X,\tau,\sfd)\), we deduce that \(\tau\)
is contained in (and, thus, coincides with) the topology \(\sigma\). Next, let us check that \(\sfd\) can be \(\tau\)-recovered. To this aim,
fix any \(x,y\in\X\) with \(x\neq y\). Note that the singleton \(\{y\}\) is \(\tau\)-compact. Applying iv), for any given
\(\lambda\in(0,\sfd(x,y))\) we can thus find \(f_\lambda\in\LIP_{b,1}(\X,\tau,\sfd)\) such that \(f_\lambda(y)=0\) and \(f_\lambda(x)=\lambda\). Hence,
\[
\sfd(x,y)=\sup_{\lambda\in(0,\sfd(x,y))}\lambda=\sup_{\lambda\in(0,\sfd(x,y))}|f_\lambda(x)-f_\lambda(y)|\leq\sup\big\{|f(x)-f(y)|\;\big|\;f\in\LIP_{b,1}(\X,\tau,\sfd)\big\},
\]
which shows that \(\sfd\) can be \(\tau\)-recovered. All in all, \((\X,\tau,\sfd)\) is an e.m.t.\ space, thus i) is proved.
\smallskip

\noindent\({\bf i)}\Longrightarrow{\bf v)}\) For any finite subset \(F\) of \(\LIP_{b,1}(\X,\tau,\sfd)\),
we define the pseudodistance \(\delta_F\) on \(\X\) as
\[
\delta_F(x,y)\coloneqq\max_{f\in F}|f(x)-f(y)|\quad\text{ for every }x,y\in\X.
\]
Next, let us define \(\Lambda\coloneqq\{\delta_F:F\subseteq\LIP_{b,1}(\X,\tau,\sfd)\text{ finite}\}\).
We denote by \(\sigma\) the initial topology of \(\{\delta(\cdot,x):\delta\in\Lambda,\,x\in\X\}\). We have that
\(\sigma\subseteq\tau\), as each function \(\delta_F(\cdot,x)\) is \(\tau\)-continuous. To prove the converse inclusion,
fix any \(U\in\tau\) and \(x\in U\). Since \(\tau\) is the initial topology of \(\LIP_{b,1}(\X,\tau,\sfd)\), we can find
a finite subset \(F\) of \(\LIP_{b,1}(\X,\tau,\sfd)\) and a real number \(\varepsilon>0\) such that
\[
\delta_F^{-1}((-\infty,\varepsilon))=\bigcap_{f\in F}\{|f-f(x)|<\varepsilon\}
=\bigcap_{f\in F}\{f(x)-\varepsilon<f<f(x)+\varepsilon\}\subseteq U.
\]
Given that \(x\in\delta_F^{-1}((-\infty,\varepsilon))\in\sigma\), we deduce that \(\tau=\sigma\).
Finally, for any \(x,y\in\X\) we have
\[\begin{split}
\sfd(x,y)&=\sup\big\{|f(x)-f(y)|\;\big|\;f\in\LIP_{b,1}(\X,\tau,\sfd)\big\}
=\sup\big\{\delta_{\{f\}}(x,y)\;\big|\;f\in\LIP_{b,1}(\X,\tau,\sfd)\big\}\\
&\leq\sup_{\delta\in\Lambda}\delta(x,y)\leq\sfd(x,y),
\end{split}\]
thanks to the fact that \(\sfd\) can be \(\tau\)-recovered and \(\delta\leq\sfd\) for every \(\delta\in\Lambda\).
All in all, v) is proved.
\smallskip

\noindent\({\bf v)}\Longrightarrow{\bf i)}\) Note that \(\delta(\cdot,x)\in\LIP_{b,1}(\X,\tau,\sfd)\) for all
\(\delta\in\Lambda\) and \(x\in\X\). Since \(\tau\) is the initial topology of \(\{\delta(\cdot,x):\delta\in\Lambda,\,x\in\X\}\),
we have a fortiori that \(\tau\) is the initial topology of \(\LIP_{b,1}(\X,\tau,\sfd)\) and
\[
\sfd(x,y)=\sup_{\delta\in\Lambda}\delta(x,y)=\sup_{\delta\in\Lambda}|\delta(x,x)-\delta(y,x)|
\leq\sup\big\{|f(x)-f(y)|\;\big|\;f\in\LIP_{b,1}(\X,\tau,\sfd)\big\}
\]
for every \(x,y\in\X\). Therefore, \((\X,\tau,\sfd)\) is an e.m.t.\ space, thus accordingly i) is proved.
\smallskip

\noindent\({\bf i)}\Longrightarrow{\bf vi)}\) The quickest way to prove this implication is to examine the proof of
Proposition \ref{prop:emt-fication_constr}. Indeed, it is evident from the construction of \((\check\X,\check\tau,\check\sfd)\)
that if \((\X,\tau,\sfd)\) is an e.m.t.\ space, then we have \(\X=\check\X\), \(\tau=\tilde\tau=\check\tau\),
\(\sfd=\tilde\sfd=\check\sfd\) and \({\sf c}_{(\X,\tau,\sfd)}={\rm id}_\X\), which implies vi).
\smallskip

\noindent\({\bf vi)}\Longrightarrow{\bf i)}\) It follows from the elementary observation that the property of
being an e.m.t.\ space is invariant under isomorphisms in \({\bf PreExt\Psi MetTop}\).
\smallskip

\noindent\({\bf vi)}\Longrightarrow{\bf vii)}\) Since \(\iota_{{\sf emt}((\X,\tau,\sfd))}\colon{\sf emt}((\X,\tau,\sfd))\to\bar\gamma((\X,\tau,\sfd))\)
is an embedding of e.m.t.\ spaces by Theorem \ref{thm:compactif_emt} ii), we deduce that
\(\bar\iota_{(\X,\tau,\sfd)}=\iota_{{\sf emt}((\X,\tau,\sfd))}\circ{\sf c}_{(\X,\tau,\sfd)}\) is an
embedding of pre-e.m.t.\ spaces, thus showing the validity of vii).
\smallskip

\noindent\({\bf vii)}\Longrightarrow{\bf vi)}\) Let us argue by contradiction: assume that \({\sf c}_{(\X,\tau,\sfd)}\)
is not an isomorphism in the category \({\bf PreExt\Psi MetTop}\), thus either
\({\sf c}_{(\X,\tau,\sfd)}\colon(\X,\tau)\to(\check\X,\check\tau)\) is not a homeomorphism
or \({\sf c}_{(\X,\tau,\sfd)}\colon(\X,\sfd)\to(\check\X,\check\sfd)\) is not distance preserving. In the former case,
we have that the map \({\sf c}_{(\X,\tau,\sfd)}\colon\X\to\check\X\) is bijective (by \eqref{eq:c_surj} and the injectivity of
\(\iota_{{\sf emt}((\X,\tau,\sfd))}\circ{\sf c}_{(\X,\tau,\sfd)}=\bar\iota_{(\X,\tau,\sfd)}\)),
but its inverse \({\sf c}_{(\X,\tau,\sfd)}^{-1}\colon(\check\X,\check\tau)\to(\X,\tau)\) is not continuous,
thus in particular \(\bar\iota_{(\X,\tau,\sfd)}^{-1}\colon\bar\iota_{(\X,\tau,\sfd)}(\X)\to\X\) is not continuous
(otherwise \({\sf c}_{(\X,\tau,\sfd)}^{-1}=\bar\iota_{(\X,\tau,\sfd)}^{-1}\circ\iota_{{\sf emt}((\X,\tau,\sfd))}^{-1}\)
would be continuous). In the latter case, there exist points \(x,y\in\X\) such that
\(\check\sfd({\sf c}_{(\X,\tau,\sfd)}(x),{\sf c}_{(\X,\tau,\sfd)}(y))<\sfd(x,y)\), whence it follows that
\((\gamma\sfd)(\bar\iota_{(\X,\tau,\sfd)}(x),\bar\iota_{(\X,\tau,\sfd)}(y))<\sfd(x,y)\). All in all, we have shown that
\(\bar\iota_{(\X,\tau,\sfd)}\) is not an embedding of pre-e.m.t.\ spaces, in contradiction with vii). Hence, vi) is proved.
\smallskip

\noindent\({\bf vii)}\Longrightarrow{\bf viii)}\) Immediate, since \(\bar\gamma((\X,\tau,\sfd))\) is a compact e.m.t.\ space.
\smallskip

\noindent\({\bf viii)}\Longrightarrow{\bf iii)}\) It can be shown by arguing exactly as in the proof of the
implication \({\bf i)}\Longrightarrow{\bf iii)}\).
\end{proof}

Note that some implications in Theorem \ref{thm:alt_char_emt}, for example \({\bf vi)}\Longrightarrow{\bf i)}\) and
\({\bf vii)}\Longrightarrow{\bf vi)}\), hold even under the weaker assumption that \((\X,\tau,\sfd)\) is only a pre-e.pm.t.\ space.
\begin{remark}{\rm
It is worth pointing out that, as a consequence of the implication \({\bf i)}\Longrightarrow{\bf iv)}\)
in Theorem \ref{thm:alt_char_emt}, the following property holds: if \((\X,\tau,\sfd)\) is an e.m.t.\ space
and \(x,y\in\X\) satisfy \(\sfd(x,y)<+\infty\), then there exists \(f\in\LIP_{b,1}(\X,\tau,\sfd)\)
with \(f(x)-f(y)=\sfd(x,y)\). In other words, the supremum in the \(\tau\)-recovery property of \(\sfd\)
is achieved for points at finite distance.
\fr}\end{remark}
\begin{remark}[Comparison with topometric spaces]\label{rmk:comparison_topometric}{\rm
Following \cite[Definition 1.2]{Ben08b}, we say that a pre-e.m.t.\ space \((\X,\tau,\sfd)\) is a \emph{topometric space}
provided the following conditions hold:
\begin{itemize}
\item \(\tau\) is coarser than the topology induced by \(\sfd\).
\item \(\sfd\colon\X\times\X\to[0,+\infty]\) is \(\tau\otimes\tau\)-lower semicontinuous.
\end{itemize}
Every e.m.t.\ space is a topometric space (by Remark \ref{rmk:d_lsc}), but the converse fails: for example,
equip a Hausdorff space \((\X,\tau)\) that is not completely regular with the \(\infty\)-discrete distance \(\sfd_{\infty\text{-discr}}^\X\);
then \((\X,\tau,\sfd_{\infty\text{-discr}}^\X)\) is a topometric space, but it is not an e.m.t.\ space.

Furthermore, following \cite[Definition 2.3]{Ben13}, we say that a topometric space \((\X,\tau,\sfd)\) is \emph{completely regular}
provided \(\sfd\) can be \(\tau\)-recovered and \(\LIP_{b,1}(\X,\tau,\sfd)\) \emph{separates points from closed sets}, which means
that for any \(\tau\)-closed set \(C\subseteq\X\) and \(\bar x\in\X\setminus C\) there exists \(f\in\LIP_{b,1}(\X,\tau,\sfd)\) such that
\(f|_C=0\) and \(f(\bar x)>0\). Therefore, it follows from Theorem \ref{thm:alt_char_emt} and \cite[Corollary 2.6]{Ben13} that
\[
(\X,\tau,\sfd)\text{ is a completely-regular topometric space}\quad\Longleftrightarrow\quad(\X,\tau,\sfd)\text{ is an e.m.t.\ space.}
\]
It is also worth pointing out that the so-called \emph{Polish extended spaces} introduced by Ambrosio, Gigli and Savar\'{e} in
\cite[Definition 2.3]{AmbrosioGigliSavare11} are exactly those topometric spaces \((\X,\tau,\sfd)\) such that \((\X,\tau)\) is a Polish space
(i.e.\ \(\tau\) is induced by a complete separable distance) and \((\X,\sfd)\) is complete.
\fr}\end{remark}
%
%
%
%
%
%

%
%
%
%
\end{document}